\date{}
\newtheorem{theorem}{Theorem}[]
\newtheorem*{theorem*}{Theorem}
\newtheorem{proposition}{Proposition}[section]
\newtheorem{lemma}[proposition]{Lemma}
\newtheorem{cor}[proposition]{Corollary}
\theoremstyle{definition}
\newtheorem{df}{Definition}[section]
\newcommand{\R}{\mathcal{R}}
\newcommand{\F}{\mathcal{F}} 
\newcommand{\FF}{\tilde{\mathcal{F}}} 
\newcommand{\dgr}{d_{\mathrm{gr}}}
\renewcommand{\P}{\mathbb{P}} 
\newcommand{\sQ}{\mathsf{Q}} 
\newcommand{\sM}{\mathsf{M}} 
\newcommand{\LT}{\mathsf{LT}} 
\renewcommand{\epsilon}{\varepsilon}
\newcommand{\st}{\mid}
\newcommand{\coll}[1]{\operatorname{coll}(#1)}
\DeclareSymbolFont{extraup}{U}{zavm}{m}{n}
\DeclareMathSymbol{\varheart}{\mathalpha}{extraup}{86}
\DeclareMathSymbol{\vardiamond}{\mathalpha}{extraup}{87}
\renewcommand*{\@fnsymbol}[1]{\ensuremath{\ifcase#1\or  \vardiamond \or \clubsuit\or \spadesuit\or
   \mathsection\or \mathparagraph\or \|\or **\or \dagger\dagger
   \or \ddagger\ddagger \else\@ctrerr\fi}}
\title{\bf \textsc{A polynomial upper bound for the mixing time of edge rotations on planar maps}}
\author{Alessandra Caraceni\thanks{Department of Statistics, University of Oxford, UK.\hfill  \texttt{Alessandra.Caraceni@stats.ox.ac.uk}.\newline The author would like to acknowledge that part of this work was done while they were supported by the EPSRC grant EP/N004566/1 ``Mathematical Analysis of Strongly Correlated Processes on Discrete Dynamic Structures''.}}
\begin{document}
\maketitle
\tikzstyle{vertex}=[circle, fill=black, inner sep=3pt, draw=white, ultra thick]
\tikzstyle{tree}=[ultra thick]
\tikzstyle{P}=[ultra thick, blue!60!white]
\tikzstyle{P'}=[ultra thick, green!70!black]
\tikzstyle{map}=[thick, black]
\tikzstyle{quad}=[thick, red]
\tikzstyle{b}=[line width=4pt, white]

\abstract{We consider a natural local dynamic on the set of all rooted planar maps with $n$ edges that is in some sense analogous to ``edge flip'' Markov chains, which have been considered before on a variety of combinatorial structures (triangulations of the $n$-gon and quadrangulations of the sphere, among others). We provide the first polynomial upper bound for the mixing time of this ``edge rotation'' chain on planar maps: we show that the spectral gap of the edge rotation chain is bounded below by an appropriate constant times $n^{-11/2}$. In doing so, we provide a partially new proof of the fact that the same bound applies to the spectral gap of edge flips on quadrangulations as defined in \cite{CS2019}, which makes it possible to generalise the result of \cite{CS2019} to a variant of the edge flip chain related to edge rotations via Tutte's bijection.}

\section{Introduction}

This work is concerned with estimating the mixing time of a particular Markov chain on the set of all possible (rooted) planar maps with $n$ edges.

Many different Markov chains with a geometric flavour have been considered on a variety of interesting state spaces given by the sets of all possible planar combinatorial structures of a certain type and size -- e.g.~plane trees, binary trees, triangulations of the $n$-gon, lattice triangulations, quadrangulations of the sphere, etc.

A natural family of Markov chains which have sparked a lot of interest, both because of their deceptive simplicity and their potential applications (e.g.~to systematic biology \cite{Ald00}), is that of ``edge flip'' chains. The archetypal example of an edge flip chain is Aldous' so-called \emph{triangulation walk} \cite{Ald94b}, defined on the state space of all possible triangulations of the $n$-gon (i.e.~of maximal configurations of non-crossing diagonals). Its transitions are \emph{edge flips} in the following sense: given a triangulation of the $n$-gon, a single step of the chain consists in choosing a diagonal uniformly at random and, with probability $1/2$, replacing it with the \emph{other} diagonal of the unique quadrilateral formed by the two triangles adjacent to it (see Figure~\ref{fig:triangulation walk}).

Giving a sharp estimate for the mixing time of the triangulation walk as a function of $n$ is a notoriously difficult open problem. The lower bound of $\Omega(n^{3/2})$ shown by Molloy, Reed and Steiger~\cite{MRS99}, which is in fact Aldous' original conjecture for the actual growth rate of the relaxation time, is still quite distant from the best upper bound to date, which is the $O(n^5\log n)$ obtained by McShine and Tetali~\cite{Catalan97}.

But triangulations of the $n$-gon are not the only structures that are well-suited to supporting an edge flip chain, though they provide perhaps the simplest possible example; edge flip dynamics have been considered for example on lattice triangulations~\cite{CMSS15,CMSS16,S17} and rectangular dissections~\cite{CMRdyadic, CLSdyadic}. Recently, Alexandre Stauffer and the author proved a polynomial upper bound for the mixing time of edge flips on quadrangulations of the sphere~\cite{CS2019}.

Some very natural classes of combinatorial objects able to support edge flip chains are specific sets of so-called \emph{planar maps}, where by planar map we mean a connected, locally finite planar (multi)graph endowed with a cellular embedding in the two-dimensional sphere, considered up to orientation-preserving homeomorphisms of the sphere itself. For example, one might consider triangulations of the sphere with $n$ edges -- that is, planar maps whose faces have degree 3 -- rather than triangulations of the $n$-gon. An edge flip would then consist in choosing an edge uniformly at random and, with probability 1/2, replacing it with the other diagonal of the quadrilateral formed by the two faces adjacent to it -- or, if the edge is adjacent to only one face, leaving it unchanged (see Figure~\ref{fig:flips on triangulations of the sphere}). This chain has been considered by Budzinski in \cite{B17}, where he shows a lower bound of $\Omega(n^{5/4})$ for the mixing time.

Analogous chains can be defined on the set of $p$-angulations of the sphere with $n$ edges for any $p>3$: one chooses an edge uniformly at random and, if it is adjacent to two distinct faces, erases it to obtain a ($2p-2$)-angular face $f$, and then draws an edge joining the $i$-th corner of $f$, where $i$ is chosen uniformly at random in $\{0,1,\ldots,2p-3\}$ (and corners are labelled, say, clockwise), to corner $i+p-1\pmod{2p-2}$, so as to recreate two $p$-angular faces within $f$. Some care must be taken (and some non-canonical choices made) in dealing with edges that are adjacent to a single face on both sides.

An especially attractive case is $p=4$, namely, that of quadrangulations (see Figure~\ref{fig:flips on quadrangulations}). In this case, an edge separating two faces, if flipped, will be replaced by one of three edges cutting the hexagon created in its absence ``in half'', chosen uniformly at random. In particular, it remains unchanged with probability $1/3$. It is therefore natural to define a flip for a quadrangulation edge adjacent to the same face on both sides as leaving the edge unchanged with probability $1/3$ and, with probability $2/3$, replacing it with an edge joining its degree $1$ endpoint to the unique vertex of the face which was not an endpoint of the original edge (see Figure~\ref{fig:flips}, and more generally Section~\ref{sec:from rotations to flips} for a detailed description of the dynamics). 

The case of quadrangulations of the sphere is interesting for multiple reasons. One is that it is still very simple and preserves a strong similarity to the case of edge flips on triangulations of the sphere and of the $n$-gon. Another is the fact that quadrangulations in particular come with a very handy toolset, including Schaeffer-type bijections with labelled plane trees \cite{Sch98}: they fall within the scope of so-called Catalan structures, that is, combinatorial structures whose enumeration is closely related to Catalan numbers (e.g.~plane trees, triangulations of the $n$-gon, binary trees etc.); as a consequence, opportunities arise for a number of possible Markov chain comparisons.

One such comparison, made with a ``leaf translation'' Markov chain on labelled plane trees, is what made it possible to show the main result of \cite{CS2019}, namely an upper bound of order $n^{11/2}$ for the relaxation time of the edge flip Markov chain on quadrangulations of the sphere.

It should now be mentioned that, in order to have the Schaeffer bijection with labelled plane trees and to have Catalan numbers emerge when enumerating quadrangulations, one considers \emph{pointed}, \emph{rooted} quadrangulations of the sphere -- that is, quadrangulations endowed with a distinguished vertex and a distinguished oriented edge. Redefining the dynamics to take the pointing and rooting into account poses no difficulties; the choice made in \cite{CS2019} is that of performing edge flips exactly as described, preserving the pointing and the orientation of the root edge when flipped (Figure~\ref{fig:flips}). It seems quite reasonable that pointing and rooting should not be truly relevant, and indeed the pointing can be quickly dealt with and does not appear in the results of \cite{CS2019}. As for the rooting, however, it is worth noting that its role is more central. While for example it is natural to conjecture that the upper bound of $O(n^{11/2})$ for the relaxation time proved in \cite{CS2019} should also hold for the mixing time of -- say -- a Markov chain that censors flips of the root edge, or that excludes the root edge from the set of ``flippable'' edges, this fact is not easy to show; moreover, the proof in the aforementioned paper relies heavily on some ad hoc geometric constructions that build upon the Schaeffer bijection, and root edge flips feature prominently in its canonical paths, so that adapting the proof is utterly non-trivial.

On the other hand, the argument in \cite{CS2019} does have the potential for generalisation, and one may very well wish to apply variants of it to other edge flip Markov chains and to other classes of planar maps.

We have mentioned how one could consider edge flips on $p$-angulations for $p\neq 4$; one other avenue for generalisation would be to consider, rather than edge flips on $p$-angulations, dynamics on the set of all planar maps with -- say -- a fixed number of edges, with no restrictions on face degrees. This is exactly what we propose to do in this paper. We shall  consider a natural dynamic on planar maps that, in analogy to edge flips, involves the local manipulation of a single random edge at each step. What we will introduce is a Markov chain which we will call the \emph{edge rotation} chain on (rooted) planar maps with $n$ edges. A single step consists essentially in choosing an oriented edge uniformly at random and sliding its ``tip'' one step to the left, or one step to the right, or leaving everything unchanged (each with probability 1/3), see Figure~\ref{fig:edge rotation chain, informal}. This description, though it should give the right general idea, needs to be formalised and amended to take into account some rather degenerate cases (e.g.~loops and degree 0 vertices); carefully reading Section~\ref{sec:edge rotations} should make it apparent how the more complex presentation given there is truly the one sensible formalisation of the edge rotation chain.

\begin{figure}
\centering
\begin{subfigure}[b]{0.3\textwidth}\centering
\begin{tikzpicture}
\node[draw=none,minimum size=4cm,regular polygon,regular polygon sides=8] (a) {};
\foreach \x in {1,2,...,8}
\node[vertex] (\x) at (a.corner \x) {};
\draw[very thick] (1)--(2)--(3)--(4)--(5) (6)--(7)--(8)--(1);
\draw[very thick, ->] (5)--(6);
\draw (1)--(3)--(5) (1)--(6)--(8);
\draw[red, very thick] (5)--(1);
\draw[blue, very thick, dashed] (3)--(6);
\end{tikzpicture}
\caption{\label{fig:triangulation walk}}
\end{subfigure}
\begin{subfigure}[b]{0.3\textwidth}\centering
\begin{tikzpicture}[scale=.8]
\draw[use as bounding box, draw=none] (-3.5,-1.3) rectangle (3,5.2);
	\begin{pgfonlayer}{nodelayer}
		\node [style=vertex] (0) at (-1, 1) {};
		\node [style=vertex] (1) at (-0.75, 3.5) {};
		\node [style=vertex] (2) at (-0.25, 1) {};
		\node [style=vertex] (3) at (0.5, 2.5) {};
		\node [style=vertex] (4) at (0.75, -0.75) {};
		\node [style=vertex] (5) at (2.5, 2.25) {};
		\node [style=vertex] (6) at (-1.5, -0.75) {};
		\node [style=vertex] (7) at (-2.75, 3.25) {};
		\node [style=vertex] (8) at (0.5, 4) {};
		\node [style=vertex] (9) at (0.25, 0.25) {};
	\end{pgfonlayer}
	\begin{pgfonlayer}{edgelayer}
		\draw (1) to (2);
		\draw (2) to (3);
		\draw (1) to (3);
		\draw [very thick, red, in=150, out=120, looseness=2.75] (2) to (6);
		\draw [in=-135, out=135, looseness=20] (2) to (2);
		\draw (2) to (0);
		\draw (6) to (2);
		\draw (1) to (8);
		\draw (8) to (3);
		\draw (5) to (2);
		\draw (6) to (4);
		\draw [bend right=45, looseness=1.25] (2) to (4);
		\draw [bend left, looseness=1.00] (7) to (2);
		\draw (8) to (5);
		\draw (7) to (1);
		\draw (5) to (4);
		\draw [bend left=45, looseness=1.25] (2) to (4);
		\draw (2) to (9);
		\draw (9) to (4);
		\draw [bend left=45, looseness=1.50] (8) to (2);
		\draw [bend right=45, looseness=1.00] (7) to (6);
		\draw [bend left, looseness=1.00] (7) to (8);
		\draw [very thick, ->, bend right=60, looseness=1.50] (6) to (5);
		\draw [bend left=90, looseness=1.50] (7) to (5);
		\draw [very thick, dashed, blue, in=-135, out=-90, looseness=1.75] (7) to (2);
	\end{pgfonlayer}
\end{tikzpicture}
\caption{\label{fig:flips on triangulations of the sphere}}
\end{subfigure}
\begin{subfigure}[b]{0.3\textwidth}\centering
\begin{tikzpicture}
	\begin{pgfonlayer}{nodelayer}
		\node [style=vertex] (0) at (-2.25, -1) {};
		\node [style=vertex] (1) at (2, -1) {};
		\node [style=vertex] (2) at (2, 3) {};
		\node [style=vertex] (3) at (-2.25, 3) {};
		\node [style=vertex] (4) at (-1.25, -0) {};
		\node [style=vertex] (5) at (-1.25, 2) {};
		\node [style=vertex] (6) at (-0.75, 1) {};
		\node [style=vertex] (7) at (0.5, 0.75) {};
		\node [style=vertex] (8) at (1.25, 2) {};
		\node [style=vertex] (9) at (1, 1.25) {};
		\node [style=vertex] (10) at (1, -0.25) {};
	\end{pgfonlayer}
	\begin{pgfonlayer}{edgelayer}
		\draw[very thick, ->] (0) to (1);
		\draw (1) to (2);
		\draw (2) to (3);
		\draw (3) to (0);
		\draw (0) to (4);
		\draw (3) to (5);
		\draw [very thick, red, bend left=75, looseness=1.75] (5) to (4);
		\draw (4) to (6);
		\draw (5) to (4);
		\draw [bend right=15, looseness=1.00] (0) to (7);
		\draw [bend left, looseness=1.00] (5) to (7);
		\draw [bend left, looseness=0.75] (7) to (8);
		\draw (7) to (9);
		\draw [bend right=60, looseness=1.25] (7) to (8);
		\draw [bend left=45, looseness=1.25] (7) to (2);
		\draw [in=-105, out=-15, looseness=1.25] (7) to (2);
		\draw (7) to (10);
		\draw (10) to (1);
		\draw[very thick, dashed, blue] (6) to (7);
		\draw [very thick, dashed, orange, in=82, out=23, looseness=5.8] (0) to (4);
	\end{pgfonlayer}
\end{tikzpicture}
\caption{\label{fig:flips on quadrangulations}}
\end{subfigure}
\caption{\label{fig:different flips} An edge flip performed on a triangulation of the octagon (a); an edge flip on a rooted triangulation of the sphere -- drawn so that the infinite face lies to the right of the root edge (b); an edge flip on a quadrangulation of the sphere, where both possible new alternative edges are drawn, dashed, in different colours (c).}
\end{figure}
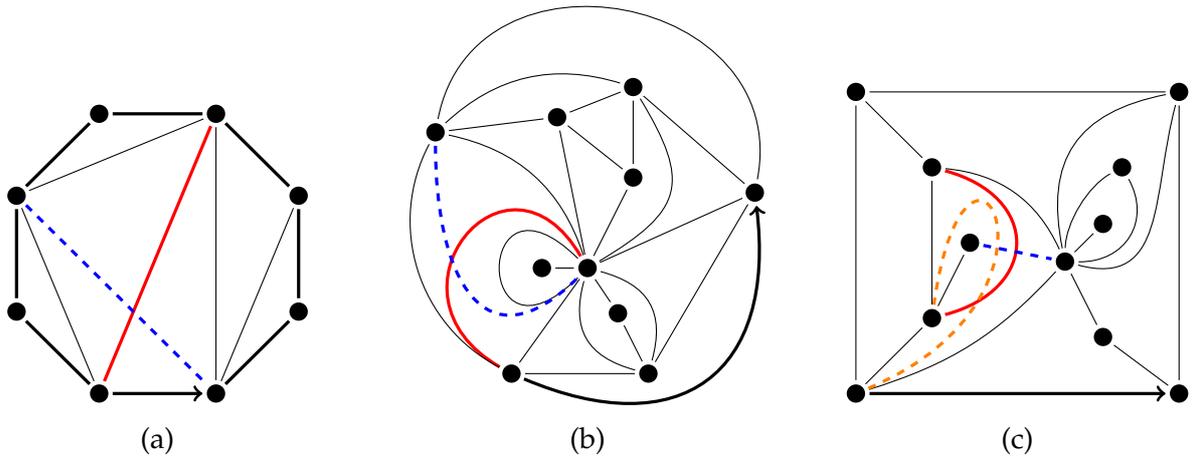

Note that, by considering the edge rotation chain on \emph{rooted} planar maps, we can take advantage of how general rooted planar maps with a fixed number of edges can themselves be thought of as Catalan structures. Indeed, thanks to Tutte's bijection \cite{Tut63} we shall directly relate the edge rotation chain to a version of the edge flip Markov chain on rooted quadrangulations where the quadrangulation root edge is not included in the set of ``flippable'' edges. 

We will then proceed to give an upper bound that will apply to both the mixing time of the edge rotation chain and that of the variant edge flip chain on rooted quadrangulations. Our main result is the following:
\begin{theorem}\label{main theorem}
Let $\nu_n$ and $\mu_n$ be the spectral gaps of the (non-root-flipping) edge flip Markov chain $\FF^n$ on the set $\sQ_n$ of quadrangulations with $n$ faces and of the edge rotation Markov chain $\R_n$ on the set $\sM_n$ of rooted planar maps with $n$ edges, respectively. We have $\nu_n=\mu_n$, and there are positive constants $C_1,C_2$ (independent of $n$) such that
$$C_1n^{-5/4}\geq\nu_n\geq C_2n^{-11/2}$$
for all $n$. Consequently, the mixing time of both chains is $O(n^{13/2})$.
\end{theorem}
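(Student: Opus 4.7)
The plan is to split the theorem into three parts: the identification $\nu_n=\mu_n$, the two inequalities on $\nu_n$, and the mixing time consequence.

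For the identification, I would use Tutte's bijection between rooted planar maps with $n$ edges and rooted quadrangulations with $n$ faces. The key point is to check that this bijection sends a single edge rotation step of $\R_n$ to a single edge flip step of $\FF^n$ preserving the distribution of outcomes, so that the two transition matrices are conjugate (via the permutation matrix induced by the bijection) and in particular have identical spectra. Concretely, one fixes conventions identifying the root edge of the quadrangulation with the root of the map, matches by hand the three outcomes of an edge rotation (left-slide, right-slide, fixed) to the three outcomes of the associated edge flip, and deals with degenerate cases (loops, leaves, vertices of degree zero) separately. The exclusion of root flips in $\FF^n$ is precisely what ensures that this correspondence stays within $\sM_n$ with root preserved.

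For the upper bound $\nu_n \leq C_1 n^{-5/4}$, I would use a test function argument with $f$ the graph distance in the quadrangulation between two canonically chosen vertices (say the endpoints of the root edge). Brownian-map-type scaling results for uniform quadrangulations yield $\mathrm{Var}_\pi(f) \asymp n^{1/2}$, while a single edge flip changes $f$ only when one of the $O(n^{1/4})$ edges lying on a geodesic between these two vertices is picked, giving a Dirichlet form $\mathcal{E}(f,f) = O(n^{-3/4})$; the ratio $\mathcal{E}(f,f)/\mathrm{Var}_\pi(f) \leq C_1 n^{-5/4}$ yields the claim.

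The lower bound $\nu_n \geq C_2 n^{-11/2}$ is the substantive content of the theorem and the main obstacle. The plan is to adapt the canonical-paths/comparison argument of \cite{CS2019}, which establishes the analogous bound for the full (root-flipping) edge flip chain on $\sQ_n$ by transporting congestion estimates from a leaf-translation chain on labelled plane trees through Schaeffer's bijection. The delicate point is that in \cite{CS2019} root edge flips appear crucially along the canonical paths, so a verbatim adaptation is impossible; each such occurrence needs to be rerouted through a short sequence of non-root flips realising the same net effect, while controlling the congestion penalty so the sharp $n^{-11/2}$ estimate survives. Working instead on $\sM_n$ via the equivalent edge rotation chain should make this rerouting more natural, since the root of a planar map is encoded more combinatorially than geometrically, opening the way to canonical paths defined directly as sequences of edge rotations compatible with the planar-map root. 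Once this inequality is in hand, the mixing time bound $\mtime = O(n^{13/2})$ follows from the standard estimate $\mtime \leq \nu_n^{-1}(1 + \log(1/\pi_{\min}))$ together with the exponential growth of $|\sM_n|$ and $|\sQ_n|$, which gives $\log(1/\pi_{\min}) = O(n)$.
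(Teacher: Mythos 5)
Your identification $\nu_n=\mu_n$ via Tutte's bijection is exactly the paper's approach (its Proposition~\ref{prop:from rotations to flips}), and your upper bound via a distance test function with variance $\asymp n^{1/2}$ and Dirichlet form $O(n^{-3/4})$ is the same kind of argument the paper borrows verbatim from \cite{CS2019}. The mixing-time consequence from $\log(1/\pi_{\min})=O(n)$ is also standard and correct.

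The lower bound is where your proposal is genuinely incomplete, and it is precisely the part the paper treats as its main contribution. You propose to take the canonical paths of \cite{CS2019} (which live on labelled plane trees and are carried to quadrangulations by Schaeffer's bijection) and ``reroute'' each occurrence of a root edge flip through a short sequence of non-root flips. The paper explicitly argues against this: because the CS2019 paths are built from ad hoc geometric constructions on top of the Schaeffer bijection and root flips appear pervasively, such rerouting is described as ``utterly non-trivial,'' and no control of the resulting congestion penalty is given in your sketch. In particular, a root flip changes the parity of distances to the origin globally, so its effect cannot in general be reproduced by a bounded number of non-root flips, and any longer replacement sequence must be shown not to create too much overlap across paths. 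What the paper actually does is different in kind: it abandons the tree-level paths entirely and constructs fresh probabilistic canonical paths directly on $\sQ_n$. Each intermediate state is written as $L_i\cdot R_i$, two quadrangulations glued across a ``separating face'' adjacent to the root; the growth algorithm of Section~\ref{sec:growth algorithm} (the $g_n$, built from the $f_n$ of \cite{CS2019} through the CVS correspondence, Lemma~\ref{leaf erasing to face collapsing}) drives the random choice of how to shrink $R_i$ and grow $L_i$ one face at a time; and the congestion bound in Proposition~\ref{prop:final estimate} rests on Corollary~\ref{cor:few corners} together with Lemma~\ref{key estimate}, which says that observing a given flip on a path pins down the pair $(L_a,R_{n-b-1})$ up to a bounded number of possibilities and therefore controls the total weight of paths through any fixed flip by $O(12^n)$. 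None of this machinery appears in your proposal, so as written it does not establish $\nu_n\geq C_2 n^{-11/2}$.
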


The proof will combine part of the approach of \cite{CS2019} with some new ideas, which render it almost completely independent of Schaeffer's bijection: we shall construct probabilistic canonical paths on the set of rooted quadrangulations rather than the set of plane trees, thus making the approach more readily generalisable.

Section~\ref{sec:trivial bijection} introduces relevant objects -- maps, quadrangulations -- and contains a brief description of Tutte's bijection, which will be used in Section~\ref{sec:from rotations to flips} to relate the edge rotation chain presented in Section~\ref{sec:edge rotations} to an edge flip chain.

The rest of the paper will develop the necessary tools to prove Theorem~\ref{main theorem}. The argument is based on an algorithm to grow quadrangulations uniformly at random by ``adding faces'' one at a time (Section~\ref{sec:growth algorithm}) and a construction of probabilistic canonical paths (Section~\ref{sec:canonical paths}) which is truly the core of this paper. Section~\ref{sec:final bound} concludes the proof.

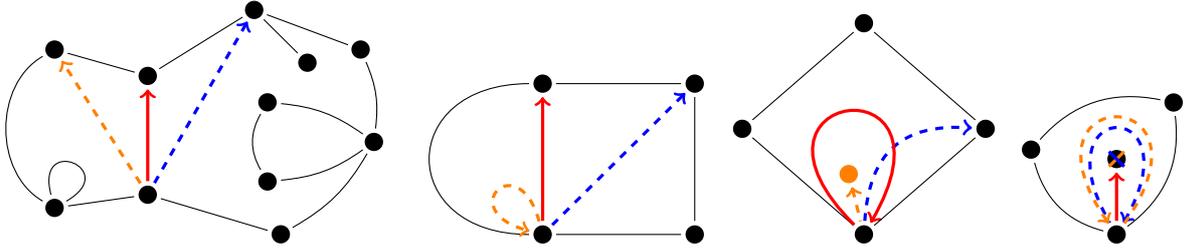
\begin{figure}
\centering
\begin{tikzpicture}[scale=.7]
	\begin{pgfonlayer}{nodelayer}
		\node [style=vertex] (0) at (0, -0.5) {};
		\node [style=vertex] (1) at (0, 1.75) {};
		\node [style=vertex] (2) at (2, 3) {};
		\node [style=vertex] (3) at (4, 2.25) {};
		\node [style=vertex] (4) at (4.25, 0.5) {};
		\node [style=vertex] (5) at (-1.75, -0.75) {};
		\node [style=vertex] (6) at (-1.75, 2.25) {};
		\node [style=vertex] (7) at (2.25, 1.25) {};
		\node [style=vertex] (8) at (2.25, -0.25) {};
		\node [style=vertex] (9) at (2.5, -1.25) {};
		\node [style=vertex] (10) at (3, 2) {};
	\end{pgfonlayer}
	\begin{pgfonlayer}{edgelayer}
		\draw (1) to (2);
		\draw (2) to (3);
		\draw [bend left=15, looseness=1.00] (3) to (4);
		\draw [bend right=15, looseness=1.00] (4) to (7);
		\draw [bend right, looseness=1.00] (7) to (8);
		\draw [bend right=15, looseness=1.00] (8) to (4);
		\draw (9) to (0);
		\draw [bend right=15, looseness=1.00] (9) to (4);
		\draw (0) to (5);
		\draw [bend left=60, looseness=1.00] (5) to (6);
		\draw (6) to (1);
		\draw [in=105, out=30, looseness=10] (5) to (5);
		\draw (2) to (10);
		\draw[very thick, red, ->] (0) to (1);
		\draw[very thick, blue, dashed, ->] (0) to (2);
		\draw[very thick, orange, dashed, ->] (0) to (6);
	\end{pgfonlayer}
\end{tikzpicture}
\begin{tikzpicture}
	\begin{pgfonlayer}{nodelayer}
		\node [style=vertex] (0) at (0, -1) {};
		\node [style=vertex] (1) at (0, 1) {};
		\node [style=vertex] (2) at (2, 1) {};
		\node [style=vertex] (3) at (2, -1) {};
	\end{pgfonlayer}
	\begin{pgfonlayer}{edgelayer}
		\draw [bend left=90, looseness=2.25] (0) to (1);
		\draw (0) to (3);
		\draw (3) to (2);
		\draw (1) to (2);
		\draw [very thick, red, ->] (0) to (1);
		\draw [very thick, dashed, orange, in=165, out=105, looseness=15, ->] (0) to (0);
		\draw [very thick, dashed, blue, ->] (0) to (2);
	\end{pgfonlayer}
\end{tikzpicture}
\begin{tikzpicture}[scale=.8]
	\begin{pgfonlayer}{nodelayer}
		\node [style=vertex] (0) at (0, -2) {};
		\node [style=vertex] (1) at (-2, -0.25) {};
		\node [style=vertex] (2) at (0, 1.5) {};
		\node [style=vertex] (3) at (2, -0.25) {};
		\node [style=vertex, fill=orange] (4) at (-0.25, -1) {};
	\end{pgfonlayer}
	\begin{pgfonlayer}{edgelayer}
		\draw (0) to (1);
		\draw [very thick, dashed, blue, bend left=45, looseness=1.25, ->] (0) to (3);
		\draw (1) to (2);
		\draw (2) to (3);
		\draw [very thick, red, in=135, out=60, looseness=30, <-] (0) to (0);
		\draw [very thick, orange, dashed, <-](4) to (0);
		\draw (0) to (3);
	\end{pgfonlayer}
\end{tikzpicture}
\begin{tikzpicture}[scale=.5]
	\begin{pgfonlayer}{nodelayer}
		\node [style=vertex] (0) at (0, -2) {};
		\node [style=vertex] (1) at (-2.25, 0.25) {};
		\node [style=vertex] (2) at (1.5, 1.5) {};
		\node [style=vertex] (3) at (0, 0) {};
			\draw [very thick, orange] (-0.2,-0.2)--(0.2,0.2);
			\draw [very thick, blue] (0.2,-0.2)--(-0.2,0.2);
	\end{pgfonlayer}
	\begin{pgfonlayer}{edgelayer}
		\draw [bend right, looseness=1.00] (1) to (0);
		\draw [bend right=30] (2) to (1);
		\draw [bend right, looseness=1.00] (0) to (2);
		\draw [very thick, red, ->] (0) to (3);
		\draw [very thick, dashed, blue, out=60, in=120, looseness=28, <-] (0) to (0);
		\draw [very thick, dashed, orange, out=55, in=125, looseness=29, ->] (0) to (0);
	\end{pgfonlayer}
\end{tikzpicture}
\caption{\label{fig:edge rotation chain, informal}An informal look at the edge rotation chain on planar maps with $n$ edges; in each case, an oriented edge is shown, together with the two (or one, in the last picture) faces adjacent to it. One can rotate it clockwise or counterclocwise, which in some cases may create a loop (second picture). If the edge is itself a loop enclosing a face of degree 1 (third picture) one of the two edge rotations causes it to ``detach itself'' from the boundary of its external face and create a new degree 1 vertex. If the edge is oriented towards an endpoint of degree 1 (fourth picture) then rotating it in either direction eliminates that endpoint in favour of a loop. A complete presentation (not in terms of the rotated oriented edge but of the corner that the tip ``rotates through'') is given in Section~\ref{sec:edge rotations}.}
\end{figure}

\section{Quadrangulations, general planar maps and edge flips}\label{sec:trivial bijection}
\begin{df}
A \emph{planar map} is a connected, locally finite planar (multi)graph endowed with a cellular embedding in the two-dimensional sphere, considered up to orientation-preserving homeomorphisms of the sphere itself.	
\end{df}

Of course, planar maps inherit terminology and features from graphs -- we shall speak of their \emph{vertices} and \emph{edges} -- but with their built-in planar embedding comes the added perk of having well defined \emph{faces} (i.e.~the connected components of the complement of the image of vertices and edges via the cellular embedding, see Figure~\ref{planar map}). It will often prove useful to endow an edge with an \emph{orientation} (each edge has two possible orientations). Given an oriented edge $\vec{e}$ in a map $m$ whose endpoints are a vertex $e^-$ (the tail) and a vertex $e^+$, we shall informally say that the \emph{corner} corresponding to $\vec{e}$ is a suitably small neighbourhood of the vertex $e^-$ intersected with the face lying directly to the right of $\vec{e}$.

We will speak of corners as ``belonging to'' faces (the corner corresponding to $\vec{e}\,$ belongs to the face lying directly to the right of $\vec{e}$) and also to vertices (the corner corresponding to $\vec{e}$ is a corner of vertex $e^{-}$). Corners of a single vertex and corners of a single face have two natural cyclic orderings: \emph{clockwise} and \emph{counterclockwise}. Given a face $f$ of a map $m$, we shall call the cyclic sequence $(c_i)_{i=1}^{\deg f}$ of all corners of $f$ in clockwise (resp.~counterclockwise) order, where the index is considered modulo $\deg f$, a \emph{clockwise} (resp.~\emph{counterclockwise}) \emph{contour} of $f$; the number $\deg f$ of corners of $f$ is the \emph{degree} of the face $f$. When mentioning a contour of the face $f$ without specifying its direction, we shall be referring to its \emph{clockwise} contour.

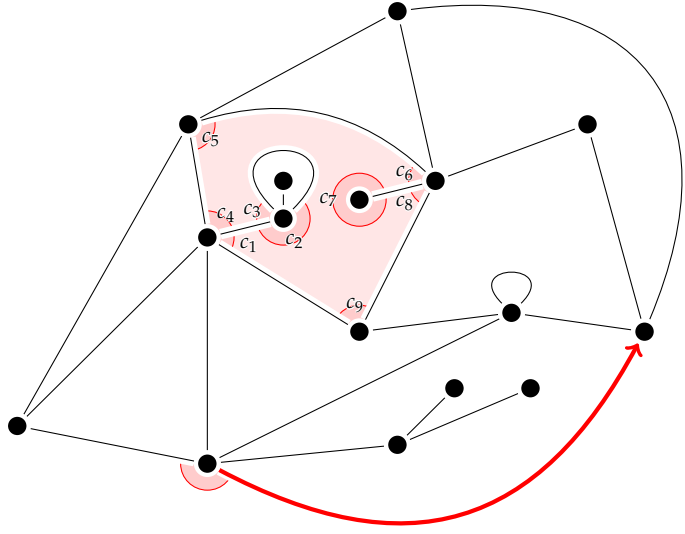
\begin{figure}
\centering
\begin{tikzpicture}
	\begin{pgfonlayer}{nodelayer}
		\node [style=vertex] (0) at (-2.75, 2.5) {};
		\node [style=vertex] (1) at (-0.75, 1.25) {};
		\node [style=vertex] (2) at (0.25, 3.25) {};
		\node [style=vertex] (3) at (-0.25, 5.5) {};
		\node [style=vertex] (4) at (-3, 4) {};
		\node [style=vertex] (5) at (-5.25, -0) {};
		\node [style=vertex] (6) at (-2.75, -0.5) {};
		\node [style=vertex] (7) at (-0.25, -0.25) {};
		\node [style=vertex] (8) at (1.25, 1.5) {};
		\node [style=vertex] (9) at (3, 1.25) {};
		\node [style=vertex] (10) at (2.25, 4) {};
		\node [style=vertex] (11) at (-1.75, 2.75) {};
		\node [style=vertex] (12) at (0.5, 0.5) {};
		\node [style=vertex] (13) at (1.5, 0.5) {};
		\node [style=vertex] (14) at (-1.75, 3.25) {};
		\node [style=vertex] (15) at (-0.75, 3) {};
	\end{pgfonlayer}
	\begin{pgfonlayer}{edgelayer}
	\fill[red!10] (4.center) to (0.center) to (1.center) to (2.center) [bend right, looseness=1.1] to (4.center);
	\begin{scope}
		\clip (4.center) to (0.center) to (1.center) to (2.center) [bend right, looseness=1.1] to (4.center);
		\draw [draw=red, fill=red!20] (0) circle (10pt);
		\draw [draw=red, fill=red!20] (1) circle (10pt);
		\draw [draw=red, fill=red!20] (2) circle (10pt);
		\draw [draw=red, fill=red!20] (4) circle (10pt);
		\draw [draw=red, fill=red!20] (11) circle (10pt);
		\draw [draw=red, fill=red!20] (15) circle (10pt);
		\node () at (-2.2, 2.4) {\scriptsize\contour{white}{$c_1$}};
		\node () at (-1.6, 2.45) {\scriptsize\contour{white}{$c_2$}};
		\node () at (-2.15, 2.85) {\scriptsize\contour{white}{$c_3$}};
		\node () at (-2.5, 2.8) {\scriptsize\contour{white}{$c_4$}};
		\node () at (-2.7, 3.8) {\scriptsize\contour{white}{$c_5$}};
		\node () at (-0.15, 3.35) {\scriptsize\contour{white}{$c_6$}};
		\node () at (-0.15, 2.95) {\scriptsize\contour{white}{$c_8$}};
		\node () at (-1.15, 3) {\scriptsize\contour{white}{$c_7$}};
		\node () at (-0.8, 1.6) {\scriptsize\contour{white}{$c_9$}};
	\end{scope}
	\begin{scope}
	\clip [bend right=45, looseness=1.25] (6) to (9) [bend left=40] to (5.center)--cycle;
\draw [draw=red, fill=red!20] (6) circle (10pt);
\end{scope}
	
		\draw [b] (11) to (0);
		\draw [b] (0) to (1);
		\draw [b] (1) to (8);
		\draw [b, in=135, out=45, loop] (8) to ();
		\draw [b] (8) to (9);
		\draw [b] (1) to (2);
		\draw [b] (2) to (10);
		\draw [b] (10) to (9);
		\draw [b] (2) to (3);
		\draw [b] (3) to (4);
		\draw [b] (4) to (0);
		\draw [b] (0) to (5);
		\draw [b] (5) to (6);
		\draw [b] (6) to (0);
		\draw [b] (6) to (8);
		\draw [b] (6) to (7);
		\draw [b, bend right=45, looseness=1.25] (6) to (9);
		\draw [b] (7) to (12);
		\draw [b] (7) to (13);
		\draw [b, in=135, out=45, looseness=15] (11) to (11);
		\draw [b] (5) to (4);
		\draw [b] (14) to (11);
		\draw [b] (15) to (2);
		\draw [b, bend left, looseness=1.00] (4) to (2);
		\draw [b,bend left=60, looseness=1.50] (3) to (9);

		\draw (11) to (0);
		\draw (0) to (1);
		\draw (1) to (8);
		\draw [in=135, out=45, loop] (8) to ();
		\draw (8) to (9);
		\draw (1) to (2);
		\draw (2) to (10);
		\draw (10) to (9);
		\draw (2) to (3);
		\draw (3) to (4);
		\draw (4) to (0);
		\draw (0) to (5);
		\draw (5) to (6);
		\draw (6) to (0);
		\draw (6) to (8);
		\draw (6) to (7);
		\draw [red, ->, ultra thick, bend right=45, looseness=1.25] (6) to (9);
		\draw (7) to (12);
		\draw (7) to (13);
		\fill [white, in=135, out=45, looseness=15] (11) to (11);
		\draw [in=135, out=45, looseness=15] (11) to (11);
		\draw (5) to (4);
		\draw (14) to (11);
		\draw (15) to (2);
		\draw [bend left, looseness=1.00] (4) to (2);
		\draw [bend left=60, looseness=1.50] (3) to (9);
	\end{pgfonlayer}
\end{tikzpicture}
\caption{A rooted planar map drawn in the plane in such a way that the infinite face contains the root corner. A face of degree 9 is shaded and the labels $c_1,\ldots,c_9$ placed along its clockwise contour.\label{planar map}}
\end{figure}

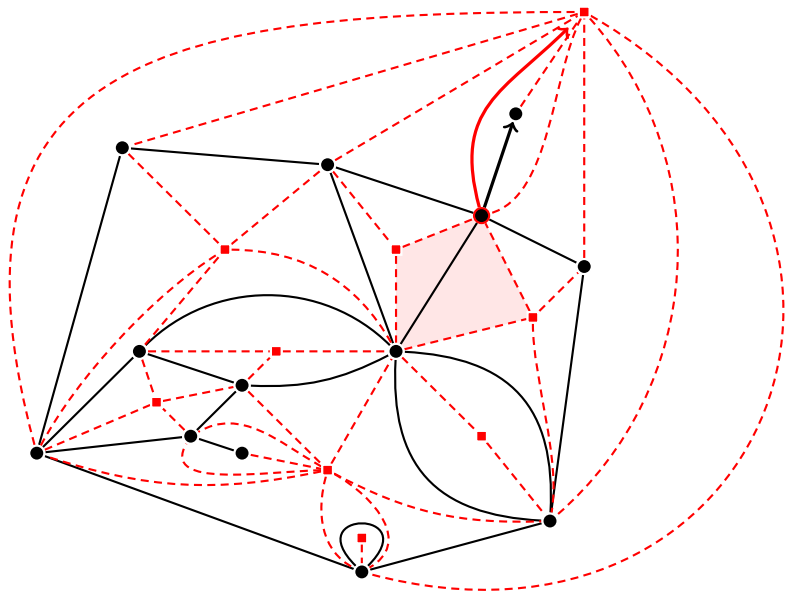
\begin{figure}\center
\begin{tikzpicture}[scale=.9, real/.style={circle, fill=black, draw=white, thick, inner sep=2pt}, new/.style={fill=red, draw=white, thick, inner sep=2pt}, bla/.style={red, thick, densely dashed}, every path/.style={thick}]
	\begin{pgfonlayer}{nodelayer}
		\node [style=real] (0) at (-4.5, 2.5) {};
		\node [style=real] (1) at (-5.75, -2) {};
		\node [style=real] (2) at (-1, -3.75) {};
		\node [style=real] (3) at (2.25, 0.75) {};
		\node [style=real] (4) at (1.25, 3) {};
		\node [style=real] (5) at (-4.25, -0.5) {};
		\node [style=real] (6) at (-3.5, -1.75) {};
		\node [style=real] (7) at (-2.75, -1) {};
		\node [style=real] (8) at (-1.5, 2.25) {};
		\node [style=real] (9) at (-0.5, -0.5) {};
		\node [style=real, thick, draw=red] (10) at (0.75, 1.5) {};
		\node [style=real] (11) at (1.75, -3) {};
		\node [style=real] (12) at (-2.75, -2) {};
		\node [style=new] (13) at (-3, 1) {};
		\node [style=new] (14) at (-0.5, 1) {};
		\node [style=new] (15) at (1.5, 0) {};
		\node [style=new] (16) at (0.75, -1.75) {};
		\node [style=new] (17) at (-2.25, -0.5) {};
		\node [style=new] (18) at (-4, -1.25) {};
		\node [style=new] (19) at (-1.5, -2.25) {};
		\node [style=new] (20) at (-1, -3.25) {};
		\node [style=new] (21) at (2.25, 4.5) {};
	\end{pgfonlayer}
	\begin{pgfonlayer}{edgelayer}
	\fill[red!10] (9.center)--(15.center)--(10.center)--(14.center)--(9.center);
		\draw (1) to (0);
		\draw (1) to (6);
		\draw (6) to (7);
		\draw (7) to (5);
		\draw (1) to (5);
		\draw (1) to (2);
		\draw (2) to (11);
		\draw (11) to (3);
		\draw (3) to (10);
		\draw (9) to (10);
		\draw [bend right=45, looseness=1.25] (9) to (11);
		\draw [bend left=15] (9) to (7);
		\draw (8) to (9);
		\draw (10) to (8);
		\draw[<-, very thick] (4) to (10);
		\draw (0) to (8);
		\draw [in=135, out=45, looseness=17] (2) to (2);
		\draw [bend left=45] (5) to (9);
		\draw [bend left=45, looseness=1.25] (9) to (11);
		\draw (12) to (6);
		\draw [style=bla] (20) to (2);
		\draw [style=bla] (5) to (17);
		\draw [style=bla] (7) to (17);
		\draw [style=bla] (5) to (13);
		\draw [style=bla] (17) to (9);
		\draw [style=bla] (5) to (18);
		\draw [style=bla] (18) to (7);
		\draw [style=bla] (6) to (18);
		\draw [style=bla] (1) to (18);
		\draw [style=bla] (14) to (9);
		\draw [style=bla] (8) to (14);
		\draw [style=bla] (14) to (10);
		\draw [style=bla] (15) to (10);
		\draw [style=bla] (9) to (15);
		\draw [style=bla, in=80, out=-90, looseness=0.75] (15) to (11);
		\draw [style=bla] (15) to (3);
		\draw [style=bla] (13) to (8);
		\draw [style=bla, bend left=15, looseness=0.75] (1) to (13);
		\draw [style=bla] (0) to (13);
		\draw [style=bla, bend left] (13) to (9);
		\draw [style=bla] (7) to (19);
		\draw [style=bla] (19) to (9);
		\draw [style=bla, bend right=15] (19) to (11);
		\draw [style=bla, in=30, out=-30, looseness=1.25] (19) to (2);
		\draw [style=bla, in=153, out=-105] (19) to (2);
		\draw [style=bla] (12) to (19);
		\draw [style=bla, bend left=15] (19) to (1);
		\draw [style=bla, in=-120, out=-179] (19) to (6);
		\draw [style=bla, in=30, out=150] (19) to (6);
		\draw [style=bla] (16) to (11);
		\draw [style=bla] (9) to (16);
		\draw [style=bla] (4) to (21);
		\draw [style=bla, in=15, out=-117, looseness=0.75] (21) to (10);
		\draw [style=bla] (3) to (21);
		\draw [style=bla] (8) to (21);
		\draw [style=bla] (21) to (0);
		\draw [style=bla, bend left=45] (21) to (11);
		\draw [style=bla, in=180, out=105, looseness=1.50] (1) to (21);
		\draw [style=bla, in=-15, out=-30, looseness=1.75] (21) to (2);
		\draw [red, in=-135, out=105, looseness=1.25, ->, very thick, shorten >=5pt] (10) to (21);
	\end{pgfonlayer}
\end{tikzpicture}
\caption{\label{fig:trivial bijection}Given a rooted planar map (black vertices and edges in the picture), the corresponding quadrangulation (red and black vertices, red dashed edges) has one face for each edge of the original map (look for example to the shaded red face, which encloses the single black edge joining its two black vertices). The map is recovered from the quadrangulation by drawing the edge in each face, which joins the two corners adjacent to black vertices.}
\end{figure}

A \emph{rooted} planar map is a pair $(m,c)$, where $m$ is a planar map and $c$ is a corner of $m$; since -- as explained above -- there is a direct correspondence between corners and oriented edges, we may also see a rooted planar map as being endowed with a distinguished oriented edge rather than a distinguished corner: we will adopt either point of view, depending of what is most convenient.

We shall call the vertex that the root corner $c$ belongs to, i.e.~the tail of the root edge, the \emph{origin} of the rooted map $(m,c)$; we shall often denote the origin of a map by $\emptyset$. Note that all maps we will refer to in this paper will be rooted; we will therefore, for the sake of simplicity, usually denote them by a single letter and not as a pair: we will write $\sM_n$ for the set of all rooted planar maps with $n$ edges and will write $m\in\sM_n$ to indicate that $m$ is a planar map with $n$ edges and is also endowed with a root corner/edge, which will normally be denoted by $\rho$.

\begin{df}
A quadrangulation is a planar map all of whose faces have degree 4. We shall write $\sQ_n$ for the set of all rooted quadrangulations with $n$ faces.	
\end{df}

It is a classical result of Tutte \cite{Tut63} that we have $|\sQ_n|=|\sM_n|$; and in fact, Tutte himself provides a simple explicit bijection $\Phi:\sM_n\to\sQ_n$, which we shall briefly describe here before making use of it for our purposes.

Given a rooted planar map $m\in\sM_n$, build a new rooted planar map as follows:
\begin{itemize}
\item draw one vertex within each face of $m$;
\item connect each newly drawn vertex to all corners in the face it belongs to (draw new edges in such a way that they do not cross);
\item erase all original edges of $m$;
\item there is one edge drawn by this procedure that crosses the original root corner of $m$; let that edge be the new root edge, oriented away from the original root corner.
\end{itemize}
The procedure described above yields a rooted planar map $\Phi(m)$ which has $|V(m)|+|F(m)|=n+2$ vertices and $2|E(m)|=2n$ edges, hence $n$ faces, each of which can be shown to be a quadrangle; in other words, $\Phi(m)\in\sQ_n$.

An inverse procedure can be described just as easily: given a quadrangulation $q\in\sQ_n$,
\begin{itemize}
\item partition the set of vertices of $q$ into two parts: we shall call \emph{real vertices} those at even graph distance from the origin and \emph{face vertices} those at odd distance (notice that real vertices are only adjacent to face vertices and vice-versa, so each face has two corners of real vertices and two corners of face vertices);
\item within each face, draw an edge joining its two corners belonging to real vertices;
\item erase all face vertices and all original edges of $q$;
\item root the newly formed map in the one corner that the original root edge of $q$ was issued from.
\end{itemize}
The map resulting from this procedure, which clearly has $n$ edges, one for each face of $q$, is none other than $\Phi^{-1}(q)$. Indeed, we have the following:

\begin{theorem}[Tutte] The mapping $\Phi$ is a bijection between the set $\sM_n$ of rooted planar maps with $n$ edges and the set $\sQ_n$ of rooted quadrangulations with $n$ faces; it induces a correspondence between the set of edges of each map $m$ and the set of faces of $\Phi(m)$.\end{theorem}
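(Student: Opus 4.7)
The plan is to verify three things in turn: that $\Phi$ lands in $\sQ_n$, that the inverse procedure is well-defined and lands in $\sM_n$, and that the two constructions cancel out in both orders.

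First, I would check that $\Phi(m)\in\sQ_n$ by counting and by inspecting the local structure around each edge. The number of vertices of $\Phi(m)$ is $|V(m)|+|F(m)|$, and the number of edges is $2|E(m)|=2n$ (one new edge per corner of $m$). Using Euler's relation $|V(m)|-|E(m)|+|F(m)|=2$ for $m$ and the same relation for $\Phi(m)$, one gets $|F(\Phi(m))|=n$ immediately. To see each face is quadrangular, I would observe that the construction draws, around each edge $e$ of $m$, exactly four new edges: two incident to the face-vertex on the left of $e$ and two incident to the face-vertex on the right, joined at the two endpoints of $e$. These four edges form the contour of a single face of $\Phi(m)$ of degree $4$ containing $e$. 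This simultaneously produces the asserted bijection between edges of $m$ and faces of $\Phi(m)$.

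Second, for the reverse procedure, I would verify that the partition of $V(q)$ into real and face vertices via the parity of the graph distance from the root is well-defined, which follows from the fact that a quadrangulation is bipartite (all faces have even degree, so all cycles do). This guarantees that around each face of $q$ exactly two opposite corners are carried by real vertices and the other two by face vertices, so drawing one diagonal per face joining the real corners and then deleting the face vertices yields a rooted planar map $\Psi(q)$ with exactly $n$ edges (one per face of $q$).

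Third, I would check $\Psi\circ\Phi=\mathrm{id}_{\sM_n}$ directly: in $\Phi(m)$ the root corner lies at an original vertex of $m$, and the bipartite structure of $\Phi(m)$ then forces the vertices introduced by $\Phi$ to be exactly those at odd distance from the root (i.e.\ the face vertices in the sense of $\Psi$); the diagonal drawn by $\Psi$ inside each face of $\Phi(m)$ therefore coincides with the original edge of $m$ contained in that face, and the root corner is preserved by construction. I would then deduce $\Phi\circ\Psi=\mathrm{id}_{\sQ_n}$ either by a symmetric verification or, more cheaply, by noting that $\Psi\circ\Phi=\mathrm{id}$ makes $\Phi$ injective while $\Psi$ is surjective, and then combining this with an independent count $|\sM_n|=|\sQ_n|$ derived from the edge-face correspondence established in the first step.

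The point I expect to require the most care is the treatment of degenerate configurations — loops, bridges and pendant edges of $m$, or faces of $q$ where both real corners belong to the same vertex — because the informal description of ``the four corners of a face'' must be read in the sense of corners rather than distinct vertices; a single vertex may contribute more than one corner to the same face, and the four new edges around an edge $e$ may share endpoints in $\Phi(m)$. Once one systematically works at the level of corners rather than vertices, the arguments go through unchanged, and the bijection between corners of $m$ on a given side of an edge and edges of $\Phi(m)$ incident to the corresponding face-vertex makes everything routine.
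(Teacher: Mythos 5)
The paper does not prove this statement: it cites Tutte~\cite{Tut63}, describes the two constructions $\Phi$ and $\Phi^{-1}$, and leaves the verification to the reader. Your sketch therefore fills in an argument the paper omits, and it is the standard one: Euler's relation plus a local ``one quadrangle per edge'' inspection to show $\Phi(m)\in\sQ_n$, bipartiteness of quadrangulations (all faces of even degree $\Rightarrow$ all cycles of even length) to make the real/face vertex split well-defined, and then mutual inversion. That is correct in outline, and you are right that the honest work is at the corner level in the degenerate configurations (loops, bridges, faces with repeated vertices).

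Two small points are worth flagging. First, your ``cheaper'' alternative for deducing $\Phi\circ\Psi=\mathrm{id}_{\sQ_n}$ is circular as written: from $\Psi\circ\Phi=\mathrm{id}_{\sM_n}$ you get that $\Phi$ is injective and $\Psi$ is surjective onto $\sM_n$, which gives $|\sM_n|\le|\sQ_n|$; to conclude you would need the reverse inequality, but the ``edge--face correspondence established in the first step'' only tells you that each $\Phi(m)$ has $n$ faces, not that $|\sM_n|=|\sQ_n|$. An independent count $|\sM_n|=|\sQ_n|$ is not available without essentially reproving the theorem (or invoking Tutte's enumeration, which is a deeper fact). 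So the symmetric verification of $\Phi\circ\Psi=\mathrm{id}_{\sQ_n}$, which you also mention, is really the one you need; alternatively you can check directly that $\Psi$ is injective. Second, for $\Psi(q)$ to be a planar map you must also check connectivity: given two real vertices joined in $q$ by a path alternating through face vertices, you need to replace each real--face--real two-step by a chain of diagonals; the argument is that the diagonals in the faces incident to a given face vertex $w$ link all neighbours of $w$ into a cycle, but this does deserve a sentence. With those two adjustments the sketch is sound.
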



\section{The edge rotation Markov chain on $\sM_n$}\label{sec:edge rotations}

\begin{figure}
\centering
\begin{tikzpicture}
	\begin{pgfonlayer}{nodelayer}
		\node [style=vertex] (0) at (-5, -0) {};
		\node [style=vertex] (1) at (-2.5, -0) {};
		\node [style=vertex] (2) at (-2.5, 2.5) {};
		\node [style=vertex] (3) at (-5, 2.5) {};
		\node [style=vertex] (4) at (-0.25, -0) {};
		\node [style=vertex] (5) at (-0.25, 2.5) {};
		\node [style=vertex] (6) at (2.75, -0) {};
		\node [style=vertex] (7) at (4.75, 2.5) {};
		\node [style=vertex] (8) at (6, -0) {};
		\node [style=vertex] (9) at (6, 1.25) {};
		\node [style=vertex] (10) at (7.25, 2.5) {};
		\node [style=vertex] (11) at (2.75, 1.25) {};
	\end{pgfonlayer}
	\begin{pgfonlayer}{edgelayer}
	\begin{scope}
	\clip (0.center) to (1.center) to (2.center) to (3.center) to (0.center);
	\filldraw[draw=red, fill=red!20] (0) circle (12pt);	
	\filldraw[draw=red, fill=red!20] (1) circle (12pt);
	\filldraw[draw=red, fill=red!20] (3) circle (12pt);
	\end{scope}
	
	\begin{scope}
	\clip [bend left=45, looseness=1.00](4.center) to (5.center) to (4.center);
	\filldraw[draw=red, fill=red!20] (4) circle (12pt);	
	\filldraw[draw=red, fill=red!20] (5) circle (12pt);
	\end{scope}
	
	\begin{scope}
	\clip [in=135, out=45, looseness=30] (6) to (6);
	\filldraw[draw=red, fill=red!20] (6) circle (12pt);
	\end{scope}
	
	\begin{scope}
	\clip [bend left=15, looseness=1.00] (8.center) to (7.center) [bend left=0] to (10.center) [bend left=15, looseness=1.00] to (8.center);
	\filldraw[draw=red, fill=red!20] (8) circle (12pt);
	\filldraw[draw=red, fill=red!20] (9) circle (9pt);
	\end{scope}
	
	\node (x) at (-4.7, 0.3) {\contour{white}{$c$}};
	\node (x) at (-4.7, 1.75) {\contour{white}{$c_+$}};
	\node (x) at (-3.1, 0.25) {\contour{white}{$c_-$}};
	\node (x) at (-4, 1) {\contour{white}{$e_=$}};
	\node (x) at (-4, -0.25) {\contour{white}{$e_-$}};
	\node (x) at (-5.25, 1) {\contour{white}{$e_+$}};
	
	\node (x) at (-0.25, 0.45) {\contour{white}{$c$}};
	\node (x) at (-0.25, 1.35) {\contour{white}{$e_=$}};
	\node (x) at (-1.1, 1.25) {\contour{white}{$e_+$}};
	\node (x) at (0.7, 1.25) {\contour{white}{$e_-$}};

		\draw[very thick, blue] (0) to (1);
		\draw (1) to (2);
		\draw (2) to (3);
		\draw[very thick, red] (3) to (0);
		\draw [very thick, red, bend left=45, looseness=1.00] (4) to (5);
		\draw [very thick, blue, bend right=45, looseness=1.00] (4) to (5);
		\draw [very thick, violet, in=135, out=45, looseness=30] (6) to (6);
		\draw[very thick, violet] (9) to (8);
		\draw [bend left=15, looseness=1.00] (8) to (7);
		\draw (7) to (10);
		\draw [bend left=15, looseness=1.00] (10) to (8);
		\draw [very thick, dashed] (3) to (1);
		\draw [very thick, dashed, out=-50, in=-130, looseness=15] (5) to (5);
		\draw [very thick, dashed] (11) to (6);
		\draw [very thick, dashed, out=130, in=50, looseness=30] (8) to (8);
		
		\node (x) at (2.8, 0.45) {\contour{white}{$c=c_+=c_-$}};
		\node (x) at (2.8, 1.85) {\contour{white}{$e_+=e_-$}};
		\node (x) at (3.2, 1.15) {\contour{white}{$e_=$}};
		\node (x) at (6, 2) {\contour{white}{$e_=$}};
		\node (x) at (6.3, 1.4) {\contour{white}{$c$}};
		\node (x) at (6, 0.7) {\contour{white}{$e_-=e_+$}};
		\node (x) at (6.35, 0.3) {\contour{white}{$c_-$}};
		\node (x) at (5.8, 0.3) {\contour{white}{$c_+$}};
	\end{pgfonlayer}
\end{tikzpicture}	
\caption{The corners $c,c_-,c_+$ within the face $f_c$; the edge $e_=$ drawn by the procedure is dashed. Notice that $e_=$ is a loop if $e_-$ and $e_+$ are corners of the same vertex, which happens when $\deg f_c=2$ and when the degree of the vertex of $c$ is 1 (second and fourth image).\label{draw triangle}}
\end{figure}
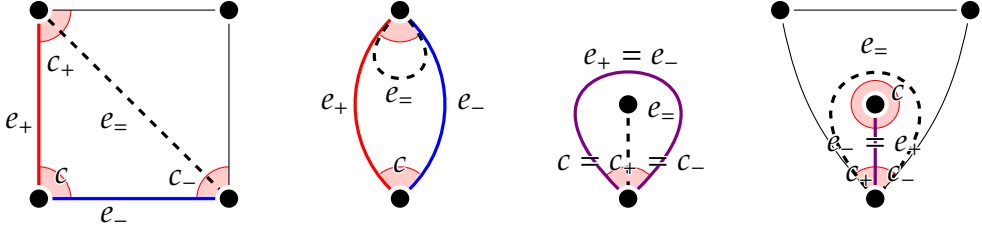

Let $m$ be a map in $\sM_n$, let $c$ be a corner of $m$ other than the root corner and let $s$ be an element of $\{=,+,-\}$. Construct a map $m^{c,s}\in\sM_n$ as follows:
\begin{itemize}
\item the corner $c$ belongs to a face $f_c$ of $m$; let $c_-$ and $c_+$ be the corners immediately before and immediately after $c$ in a clockwise contour of $f_c$;
\item let $e_-$ and $e_+$ be the edges of $f_c$ joining $c_-$ to $c$ and $c$ to $c_+$ respectively; in the case where $\deg f_c=2$, in which $c_-=c_+$, $e_-$ and $e_+$ are the two edges forming the boundary of $f_c$, and they are named in such a way that $f_c$ lies to the right of the edge $e_+$, oriented away from $c$ (Figure~\ref{draw triangle}); in the case where $\deg f_c=1$, in which $c=c_-=c_+$, we set $e_-=e_+$ to be the one loop which constitutes the boundary of $f_c$;
\item if $\deg f_c>1$, draw an edge $e_=$ joining corner $c_-$ to corner $c_+$ (in the case where $c_-=c_+$ and the case where the vertex of $c$ has degree 1, notice that $e_=$ will be a loop);
\item if $c_-=c=c_+$ (that is if $\deg f_c = 1$) draw a new vertex within the loop $e_-$ and join it to the vertex of $c$ by a new edge $e_=$;
\item notice now that, whatever the case for $\deg f_c$, one new edge $e_=$ has been drawn and a triangular face containing $c$, whose boundary edges are $\{e_=,e_-,e_+\}$ (which are not necessarily all distinct), has been created;
\item finally, erase the edge $e_s$ (and any vertices adjacent only to $e_s$); if $s=\pm$ and the root corner $\rho$ of $m$ is in $\{c_-,c_+\}$, set the new root corner to be the part of $\rho$ that did not belong to the triangular face containing $c$ created by drawing $e_=$; otherwise, set the new root corner to be the one that contains the original corner $\rho$ (which is ``larger'' than the original only if $e_s$ is an edge adjacent to $\rho$). The new rooted map obtained in this way is $m^{c,s}$; it has exactly as many edges as $m$ and therefore belongs to $\sM_n$.
\end{itemize}

We shall say that $m^{c,s}$ is obtained from $m$ via an \emph{edge rotation}; though rotating edges is not explicitly mentioned in the construction above, the reason for the name should be clear: except for some rather degenerate cases, the whole construction -- when $s\in\{-,+\}$ -- essentially consists of ``rotating'' the edge $e_s$ within the corner $c_s$ by ``detaching it'' from the vertex of $c$ and instead setting its other endpoint to be within the next corner in the clockwise/counterclockwise contour of $f_c$, thus effectively turning it into the new edge $e_=$ (see Figure~\ref{edge rotation}). Even the case where $e_-=e_+$, which turns an edge with an endpoint of degree 1 into a loop and vice-versa, can be thought of as an edge rotation of sorts.

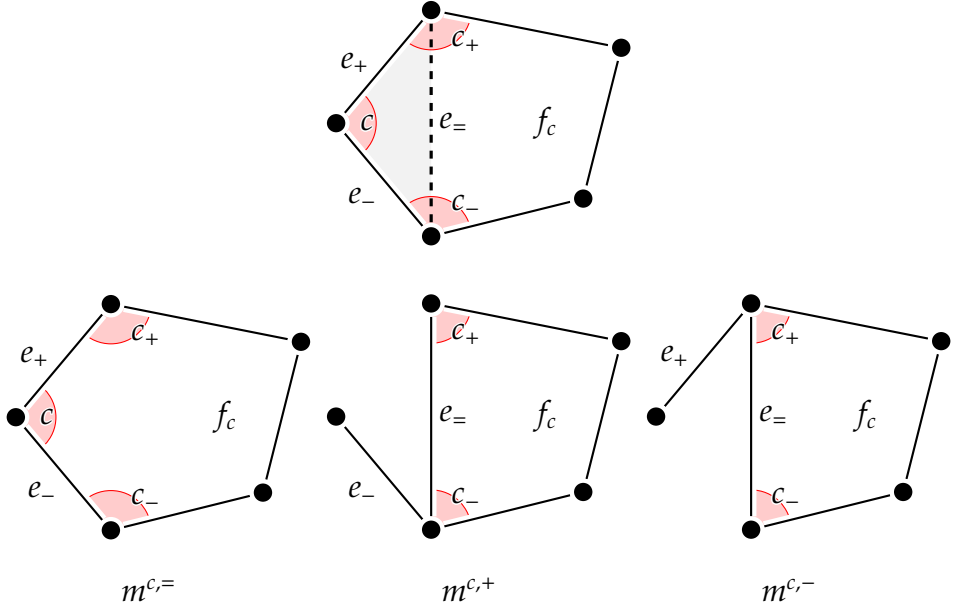
\begin{figure}\centering
\begin{tikzpicture}
	\begin{pgfonlayer}{nodelayer}
		\node [style=vertex, label=below right:\contour{white}{$c_+$}] (0) at (-2, 3) {};
		\node [style=vertex, label=right:\contour{white}{$c$}] (1) at (-3.25, 1.5) {};
		\node [style=vertex, label=above right:\contour{white}{$c_-$}] (2) at (-2, -0) {};
		\node [style=vertex] (3) at (0.5, 2.5) {};
		\node [style=vertex] (4) at (0, 0.5) {};
	\end{pgfonlayer}
	\begin{pgfonlayer}{edgelayer}
	
	\fill[black!5] (1.center) to (2.center) to (0.center) to (1.center);
	
	\begin{scope}
	\clip (1.center) to (2.center) to (4.center) to (3.center) to (0.center) to (1.center);
	\draw[draw=red, fill=red!20] (0) circle (15pt);
	\draw[draw=red, fill=red!20] (1) circle (15pt);
	\draw[draw=red, fill=red!20] (2) circle (15pt);	
	\end{scope}
	
		\draw[b] (1) to (0);
		\draw[b] (0) to (3);
		\draw[b] (1) to (2);
		\draw[b] (2) to (4);
		\draw[b] (4) to (3);
		
		\draw[map] (1) to (0);
		\draw[map] (0) to (3);
		\draw[map] (1) to (2);
		\draw[map] (2) to (4);
		\draw[map] (4) to (3);
		
		\draw[dashed, very thick] (0) to (2);
		
		\node (x) at (-0.5, 1.5) {\contour{white}{$f_c$}};
		\node (e=) at (-1.7, 1.5) {\contour{white}{$e_=$}};
		\node (x) at (-3, 2.3) {\contour{white}{$e_+$}};
		\node (x) at (-2.9, 0.5) {\contour{white}{$e_-$}};
	\end{pgfonlayer}
\end{tikzpicture}\\[.5cm]
\begin{tikzpicture}
	\begin{pgfonlayer}{nodelayer}
		\node [style=vertex, label=below right:\contour{white}{$c_+$}] (0) at (-2, 3) {};
		\node [style=vertex, label=right:\contour{white}{$c$}] (1) at (-3.25, 1.5) {};
		\node [style=vertex, label=above right:\contour{white}{$c_-$}] (2) at (-2, -0) {};
		\node [style=vertex] (3) at (0.5, 2.5) {};
		\node [style=vertex] (4) at (0, 0.5) {};
	\end{pgfonlayer}
	\begin{pgfonlayer}{edgelayer}
	
	\begin{scope}
	\clip (1.center) to (2.center) to (4.center) to (3.center) to (0.center) to (1.center);
	\draw[draw=red, fill=red!20] (0) circle (15pt);
	\draw[draw=red, fill=red!20] (1) circle (15pt);
	\draw[draw=red, fill=red!20] (2) circle (15pt);	
	\end{scope}
	
		\draw[b] (1) to (0);
		\draw[b] (0) to (3);
		\draw[b] (1) to (2);
		\draw[b] (2) to (4);
		\draw[b] (4) to (3);
		
		\draw[map] (1) to (0);
		\draw[map] (0) to (3);
		\draw[map] (1) to (2);
		\draw[map] (2) to (4);
		\draw[map] (4) to (3);
		
		\node (x) at (-0.5, 1.5) {\contour{white}{$f_c$}};
		\node (x) at (-3, 2.3) {\contour{white}{$e_+$}};
		\node (x) at (-2.9, 0.5) {\contour{white}{$e_-$}};
		\node (x) at (-1.5, -0.8) {\contour{white}{$m^{c,=}$}};
	\end{pgfonlayer}
\end{tikzpicture}
\begin{tikzpicture}
	\begin{pgfonlayer}{nodelayer}
		\node [style=vertex, label=below right:\contour{white}{$c_+$}] (0) at (-2, 3) {};
		\node [style=vertex] (1) at (-3.25, 1.5) {};
		\node [style=vertex, label=above right:\contour{white}{$c_-$}] (2) at (-2, -0) {};
		\node [style=vertex] (3) at (0.5, 2.5) {};
		\node [style=vertex] (4) at (0, 0.5) {};
	\end{pgfonlayer}
	\begin{pgfonlayer}{edgelayer}
	
	\begin{scope}
	\clip (2.center) to (4.center) to (3.center) to (0.center) to (2.center);
	\draw[draw=red, fill=red!20] (0) circle (15pt);
	\draw[draw=red, fill=red!20] (1) circle (15pt);
	\draw[draw=red, fill=red!20] (2) circle (15pt);	
	\end{scope}
	
		\draw[b] (0) to (3);
		\draw[b] (1) to (2);
		\draw[b] (2) to (4);
		\draw[b] (4) to (3);
		\draw[b] (0) to (2);
		
		\draw[map] (0) to (3);
		\draw[map] (1) to (2);
		\draw[map] (2) to (4);
		\draw[map] (4) to (3);
		
		\draw[map] (0) to (2);
		
		\node (x) at (-0.5, 1.5) {\contour{white}{$f_c$}};
		\node (e=) at (-1.7, 1.5) {\contour{white}{$e_=$}};
		\node (x) at (-2.9, 0.5) {\contour{white}{$e_-$}};
		\node (x) at (-1.5, -0.8) {\contour{white}{$m^{c,+}$}};
	\end{pgfonlayer}
\end{tikzpicture}
\begin{tikzpicture}
	\begin{pgfonlayer}{nodelayer}
		\node [style=vertex, label=below right:\contour{white}{$c_+$}] (0) at (-2, 3) {};
		\node [style=vertex] (1) at (-3.25, 1.5) {};
		\node [style=vertex, label=above right:\contour{white}{$c_-$}] (2) at (-2, -0) {};
		\node [style=vertex] (3) at (0.5, 2.5) {};
		\node [style=vertex] (4) at (0, 0.5) {};
	\end{pgfonlayer}
	\begin{pgfonlayer}{edgelayer}
	
	\begin{scope}
	\clip (2.center) to (4.center) to (3.center) to (0.center) to (2.center);
	\draw[draw=red, fill=red!20] (0) circle (15pt);
	\draw[draw=red, fill=red!20] (1) circle (15pt);
	\draw[draw=red, fill=red!20] (2) circle (15pt);	
	\end{scope}
	
		\draw[b] (1) to (0);
		\draw[b] (0) to (3);
		\draw[b] (2) to (4);
		\draw[b] (4) to (3);
		\draw[b] (0) to (2);
		
		\draw[map] (1) to (0);
		\draw[map] (0) to (3);
		\draw[map] (2) to (4);
		\draw[map] (4) to (3);
		
		\draw[map] (0) to (2);
		
		\node (x) at (-0.5, 1.5) {\contour{white}{$f_c$}};
		\node (e=) at (-1.7, 1.5) {\contour{white}{$e_=$}};
		\node (x) at (-3, 2.3) {\contour{white}{$e_+$}};
		\node (x) at (-1.5, -0.8) {\contour{white}{$m^{c,-}$}};
	\end{pgfonlayer}
\end{tikzpicture}
\caption{\label{edge rotation}The three maps of the form $m^{c,s}$ as constructed from $m\in \sM_n$.}
	
\end{figure}

We can naturally identify the edge $e_s$ in $m$ with the ``rotated edge'' $e_=$ in $m^{c,s}$ (when $s\in\{+,-\}$) and thus have a natural identification between edges of $m$ and edges of $m^{c,s}$. Faces and vertices cannot be as readily identified between $m$ and $m^{c,s}$, because the number of faces and vertices may increase or decrease; and, even though the number of corners remains unchanged after an edge rotation, defining a 1-to-1 correspondence is not entirely canonical, although there is one that is compatible with our choice of the rerooting, in the sense that it allows us to interpret the new choice of the root corner as ``leaving it unchanged''.

Corners other than those involving the vertices of $c,c_-,c_+$ are of course untouched, and will be denoted by the same symbols in $m$ and $m^{c,s}$. If $s\in\{-,+\}$, then we shall identify $c_-$ and $c_+$ with the two (not necessarily distinct) corners joined by the newly drawn edge $e_=$ in the face lying directly to the right of $e_=$, oriented from $c_-$ to $c_+$ (again, see Figure~\ref{edge rotation}).

Notice that, if $s=+$, then the number of corners around the vertex $v_+$ of $c_+$ is unchanged; if those corners were $c_+,c_1,\ldots,c_k$ in clockwise order around $v_+$, starting with $c_+$ in the map $m$, we will identify them with the $k+1$ corners around the vertex of $c_+$ in $m^{c,s}$, having already identified $c_+$, by just keeping the same order.

	

The dynamic on maps we shall be considering throughout this paper is given by a Markov chain $\R^n$ on $\sM_n$ which is such that, assuming $\R^n_k=m$ for some $m\in\sM_n$, we have $\R^n_{k+1}=m^{c,s}$, where $c$ and $s$ are independent random variables, $s$ being uniformly distributed in $\{=,+,-\}$ and $c$ being a corner of $m$ other than its root corner, chosen uniformly at random; in other words, transitions probabilities for $\R^n$ are of the form
\begin{equation}p_\R(m,m')=\frac{1}{3(2n-1)}\sum_{c\in C(m)\setminus \rho}\left(1_{m'=m^{c,+}}+1_{m'=m^{c,-}}+1_{m'=m}\right),\label{R^n transitions}\end{equation}
where $C(m)$ is the set of all corners of $m$ and $\rho$ is its root corner.
\begin{lemma}
The Markov chain $\R^n$ is reversible, aperiodic and irreducible.	
\end{lemma}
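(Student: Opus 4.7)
The plan is to verify the three properties in turn. Aperiodicity is immediate from the transition formula \eqref{R^n transitions}: by construction $m^{c,=} = m$ for every corner $c$, since the edge $e_=$ drawn in the course of the construction is precisely the one erased in the final step when $s = {=}$. Consequently
\[
p_\R(m,m) \;\geq\; \frac{2n-1}{3(2n-1)} \;=\; \frac{1}{3}
\]
for every $m \in \sM_n$, so the chain has a self-loop at every state.

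For reversibility, since $\sM_n$ is finite, it suffices to prove that the uniform distribution is stationary, which by \eqref{R^n transitions} reduces to the symmetry $p_\R(m,m') = p_\R(m',m)$. Because the $s = {=}$ contributions only affect the diagonal, this amounts to exhibiting an involution on the set of triples $\{(m,c,s) : m \in \sM_n,\ c \in C(m) \setminus \rho,\ s \in \{+,-\}\}$ pairing each $(m,c,s)$ with some $(m^{c,s}, c', s')$ satisfying $(m^{c,s})^{c',s'} = m$. Geometrically the point is that every rotation has a unique inverse: when $s = +$, the new edge $e_=$ of $m^{c,+}$ can be rotated back at the corner of $m^{c,+}$ identified with $c_-$, in direction $s' = -$, restoring $e_+$ and recovering $m$; the case $s = -$ is symmetric. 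The actual verification is a case analysis handling the degenerate configurations permitted by the construction (faces of degree $1$ or $2$, loops enclosing a face, pendant edges) and confirming that the re-rooting convention remains consistent when the root corner is adjacent to one of the affected edges. This case analysis is the principal obstacle: the identification between corners of $m$ and $m^{c,s}$ is not entirely canonical, so each local configuration must be checked separately.

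For irreducibility, once reversibility is known, it suffices to show that every $m \in \sM_n$ can be connected by a sequence of rotations to a single canonical map $m_0 \in \sM_n$. A convenient target is the bouquet $B_n$ consisting of one vertex bearing $n$ loops with a prescribed root corner, reached by induction on $|V(m)|$: whenever $|V(m)|>1$, a sequence of rotations eventually produces a vertex of degree $1$, whose incident edge can then be rotated into a loop (as in the fourth panel of Figure~\ref{fig:edge rotation chain, informal}), thereby decreasing $|V(m)|$ by one. Once $|V(m)| = 1$, the resulting bouquet of $n$ loops can be carried to $B_n$ by a further sequence of rotations permuting the cyclic order of the loops around the unique vertex. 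Establishing the existence of these preliminary rotations (those that create a degree-$1$ vertex when none is present) is itself a case analysis on the local structure of $m$, but it is strictly simpler than the involution argument above.
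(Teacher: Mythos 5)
Your treatment of aperiodicity is correct and complete, and your sketch of reversibility (an involution pairing $(m,c,+)$ with $(m^{c,+},c',-)$ where $c'$ is identified with $c_-$) is the right idea; the paper itself treats both as immediate from the transition formula, so you are at essentially the same level of rigor there. The irreducibility argument, however, takes a genuinely different route from the paper's, and that route as written has two real gaps.

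The paper does not reduce the number of vertices to one and then rearrange loops. Instead it works entirely around the origin: first reduce the degree of the face $f_\rho$ containing the root corner to $1$ (so the root edge becomes a loop), then iterate counterclockwise around the origin, reducing the degree of the first face of degree $>2$ encountered; the process terminates in the nested-loops map $m_0$ because a map whose root edge is a loop and all of whose faces around the origin have degree $1$ or $2$ can only be $m_0$. This argument never has to invent a degree-one vertex, never has to worry about which rotations stay within the class of single-vertex maps, and automatically respects the constraint that one never rotates at the root corner. Your version, by contrast, relies on two unproved assertions: (a) that when $|V(m)|>1$ one can always manufacture a degree-$1$ vertex by rotations avoiding the root corner (true, but it needs an argument — e.g.~picking a corner of a vertex $u$ of degree $\ge 2$ other than $\rho$ and rotating away its incident edges one by one, handling the case where $u$ is the origin); and (b) that any single-vertex map can be carried to $B_n$. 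Point (b) is the more serious gap: first, $B_n$ is not uniquely specified by "one vertex bearing $n$ loops" — a one-vertex map is determined by a \emph{non-crossing matching} of $2n$ half-edge slots (a parenthesization), not just a "cyclic order of loops", and nested vs.~non-nested configurations are different maps; second, rotations performed on a one-vertex map may temporarily create new degree-$1$ vertices (as in the third panel of your cited figure), so the navigation between parenthesizations does not stay inside the class of one-vertex maps and would have to be controlled by a different well-founded measure. Neither sub-claim is false, but as presented both are promissory notes, and together they are doing the entire work of the lemma; the paper's single iterative scheme avoids both.
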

\begin{proof}
The first two properties are clear by construction (they can be inferred immediately from expression \eqref{R^n transitions} for the transition probabilities).

As for irreducibility, we shall show that every map $m\in\sM_n$ can be turned into the map $m_0$ made of $n$ nested loops, rooted in the corner within the central loop, via a sequence of edge rotations.

 Given $m\in\sM_n$, consider the face $f_\rho$ containing the root corner $\rho$ of $m$. Suppose it has clockwise contour $\rho,c_1,\ldots,c_k$, with $k\geq 1$: by considering the edge-rotated map $m^{c_1,-}$ we can reduce the degree of $f_\rho$ by 1 (by which we mean the the face containing the root corner in $m^{c_1,-}$ has degree $k$ rather than $k+1$). We can therefore reduce $m$, via a sequence of edge rotations, to a map whose root corner lies within a loop (i.e.~such that the root edge is a loop). Now, given any map $\tilde{m}$ such that the root edge is a loop, consider the first corner $c$ in counterclockwise order around the origin, starting with the root corner $\rho$, such that $c$ lies within a face $f_c$ of degree strictly more than 2, whose clockwise contour we will call $c,c_1,\ldots,c_k$ (with $k>1$). Taking the map $\tilde{m}^{c_1,-}$ decreases the degree of the face $f_c$ by 1: repeating this operation yields a map such that the root edge is a loop and all corners around the origin lie within faces of degree 1 or 2. Such a map can only be $m_0$: if one draws it on the plane in such a way that the root corner lies within the infinite face, so that the root edge is an ``external'' loop, one finds that the finite face adjacent to it is either a degree 1 face -- in which case the map has only one edge -- or has degree 2, in which case its boundary is completed by one ``internal'' loop; repeating this argument inductively identifies the map as $m_0$.
 \end{proof}

\section{From edge rotations on maps to edge flips on quadrangulations}\label{sec:from rotations to flips}

We now wish to relate our edge rotation dynamic on planar maps to the Markov chain of edge flips on quadrangulations as introduced in~\cite{CS2019} -- or rather, to a slight variant thereof.

The edge flip Markov chain $\F^n$ on the set $\sQ_n$ was introduced in \cite{CS2019} as a chain whose steps consist in, given a quadrangulation, selecting one of its edges uniformly at random and then making an independent uniform choice among the following three options: leaving it unchanged, flipping it clockwise or flipping it counterclockwise. The choice of the root edge was allowed and flipping the root edge would preserve its orientation.
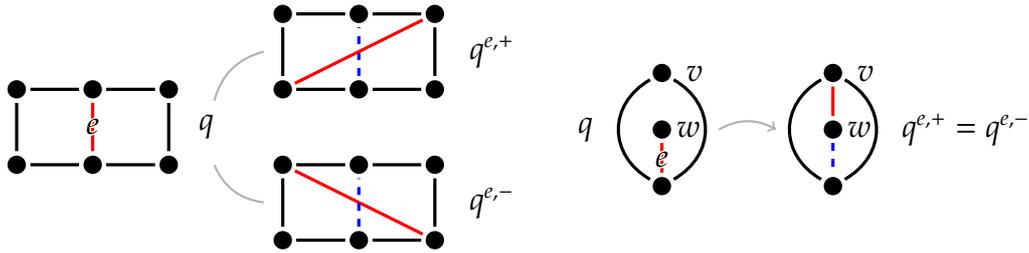
\begin{figure}[t]\centering
\begin{minipage}{7cm}\begin{tikzpicture}
	\begin{pgfonlayer}{nodelayer}
		\node [style=vertex] (0) at (-4, 3) {};
		\node [style=vertex] (1) at (-4, 4) {};
		\node [style=vertex] (2) at (-3, 4) {};
		\node (x) at (-3,3.5) {\contour{white}{$e$}};
		\node [style=vertex] (3) at (-2, 4) {};
		\node [style=vertex] (4) at (-2, 3) {};
		\node [style=vertex] (5) at (-3, 3) {};
		\node [style=vertex] (6) at (-0.5, 4) {};
		\node [style=vertex] (7) at (0.5, 5) {};
		\node [style=vertex] (8) at (1.5, 4) {};
		\node [style=vertex] (9) at (0.5, 4) {};
		\node [style=vertex] (10) at (-0.5, 5) {};
		\node [style=vertex] (11) at (1.5, 5) {};
		\node [style=vertex] (12) at (0.5, 3) {};
		\node [style=vertex] (13) at (1.5, 2) {};
		\node [style=vertex] (14) at (0.5, 2) {};
		\node [style=vertex] (15) at (-0.5, 3) {};
		\node [style=vertex] (16) at (-0.5, 2) {};
		\node [style=vertex] (17) at (1.5, 3) {};
		\node [] (30) at (-1.5, 3.5) {$q$};
		\node [] (33) at (2.25, 4.5) {$q^{e,+}$};
		\node [] (34) at (2.25, 2.5) {$q^{e,-}$};
	\end{pgfonlayer}
	\begin{pgfonlayer}{edgelayer}
	\draw [thick, bend left, black!30] (-1.4,3.85) to (-0.75,4.5);
	\draw [thick, bend right, black!30] (-1.4,3) to (-0.75,2.5);
		\draw [very thick] (0) to (1);
		\draw [very thick] (1) to (2);
		\draw [very thick] (2) to (3);
		\draw [very thick] (3) to (4);
		\draw [very thick] (4) to (5);
		\draw [very thick] (5) to (0);
		\draw [very thick, red] (5) to (2);
		\draw [very thick] (6) to (10);
		\draw [very thick] (10) to (7);
		\draw [very thick] (7) to (11);
		\draw [very thick] (11) to (8);
		\draw [very thick] (8) to (9);
		\draw [very thick] (9) to (6);
		\draw [blue, very thick, dashed] (9) to (7);
		\draw [very thick, red] (6) to (11);
		\draw [very thick] (16) to (15);
		\draw [very thick] (15) to (12);
		\draw [very thick] (12) to (17);
		\draw [very thick] (17) to (13);
		\draw [very thick] (13) to (14);
		\draw [very thick] (14) to (16);
		\draw [very thick, red] (13) to (15);
		\draw [blue, very thick, dashed] (14) to (12);
	\end{pgfonlayer}
\end{tikzpicture}\end{minipage}\quad
\begin{minipage}{6cm}
\begin{tikzpicture}
	\begin{pgfonlayer}{nodelayer}
		\node [style=vertex] (0) at (-1.5, -1) {};
		\node [style=vertex] (1) at (-1.5, -0.25) {};
		\node [style=vertex, label=right:$v$] (2) at (-1.5, 0.5) {};
		\node [style=vertex] (3) at (0.75, -1) {};
		\node [style=vertex, label=right:$v$] (4) at (0.75, 0.5) {};
		\node [style=vertex] (5) at (0.75, -0.25) {};
		\node [] (w) at (1.1, -0.25) {$w$};
		\node [] (w) at (-1.15, -0.25) {$w$};
		\node [] (6) at (-2.5, -0.25) {$q$};
		\node [] (7) at (2.5, -0.25) {$q^{e,+}=q^{e,-}$};
		\node (x) at (-1.5,-0.65) {\contour{white}{$e$}};
	\end{pgfonlayer}
	\begin{pgfonlayer}{edgelayer}
	\draw [thick, ->, bend left, black!30] (-0.75,-0.25) to (0,-0.25);
		\draw [very thick, bend left=60, looseness=1.25] (0) to (2);
		\draw [very thick, bend right=60, looseness=1.25] (0) to (2);
		\draw [very thick, red] (0) to (1);
		\draw [very thick, bend left=60, looseness=1.25] (3) to (4);
		\draw [very thick, bend right=60, looseness=1.25] (3) to (4);
		\draw [very thick, red] (5) to (4);
		\draw [very thick, blue, dashed] (5) to (3);
	\end{pgfonlayer}
\end{tikzpicture}\end{minipage}
\caption{\label{fig:flips}Clockwise and counterclockwise flips for a simple and a double edge in a quadrangulation.}\end{figure}

More formally, given a quadrangulation $q\in\sQ_n$ and an edge $e$ of $q$, we denote by $q^{e,+}$ (resp.~$q^{e,-}$), \emph{the quadrangulation obtained from $q$ by flipping edge $e$ clockwise} (resp.~\emph{counterclockwise}), by which we mean the quadrangulation given by the following procedure:
\begin{itemize}
\item if $e$ is adjacent to two distinct faces of $q$, erase $e$ from $q$ (thus obtaining a new face with exactly 6 corners) and replace it with the edge obtained by rotating $e$ clockwise (resp. counterclockwise) by one corner (see Figure~\ref{fig:flips}).
\item if $e$ is an internal edge within a degenerate face, let $v$ be the vertex of that face that is \emph{not} an endpoint of $e$ and let $w$ be the endpoint of $e$ having degree 1; erase $e$ and replace it with an edge within the same face having endpoints $v,w$. If $e$ is the root edge of $q$, let the newly drawn edge be the root of $q^{e,+}$ (resp.~$q^{e,-}$), oriented in the same way as before (with respect to $w$).\end{itemize}

The edge flip Markov chain as originally described has transition probabilities
$$p_\F(q,q')=\frac{1}{6n}\sum_{e\in E(q)}\left(1_{q'=q^{e,+}}+1_{q'=q^{e,-}}+1_{q=q'}\right).$$

In order to directly relate a dynamic on maps to edge flips on 	quadrangulations, however, one is led to consider a variant of the chain $\F^n$ that does not allow flipping the root edge. Indeed, the Tutte bijection $\Phi$ assigns very different roles to quadrangulation vertices at even and odd distance from the origin: since a root flip (at least as described) would change the parity of the distance to the origin for each vertex in the quadrangulation, the two maps corresponding to the quadrangulation before and after the flip are potentially completely different from each other.

It therefore becomes necessary to redefine or completely eliminate root edge flips from the chain $\F^n$; in particular, we shall from here on consider a new edge flip Markov chain where the choice of the edge to flip is uniform \emph{among all edges other than the root edge}. We shall still refer to this as \emph{the edge flip} Markov chain on $\sQ_n$ and we shall denote it by $\FF^n$; its transition probabilities are of the form
  $$p_{\FF}(q,q')=\frac{1}{3(2n-1)}\sum_{e\in E(q)\setminus \rho}\left(1_{q'=q^{e,+}}+1_{q'=q^{e,-}}+1_{q=q'}\right),$$
  where $\rho$ is the root edge of $q$.
  
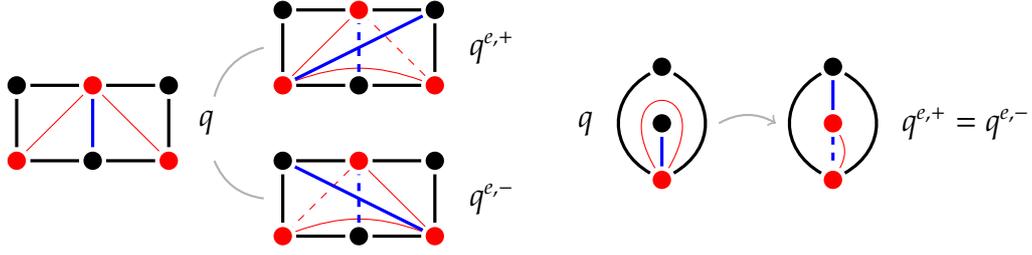
\begin{figure}[t]\centering
\begin{minipage}{7cm}\begin{tikzpicture}
	\begin{pgfonlayer}{nodelayer}
		\node [style=vertex, fill=red] (0) at (-4, 3) {};
		\node [style=vertex] (1) at (-4, 4) {};
		\node [style=vertex, fill=red] (2) at (-3, 4) {};
		\node [style=vertex] (3) at (-2, 4) {};
		\node [style=vertex, fill=red] (4) at (-2, 3) {};
		\node [style=vertex] (5) at (-3, 3) {};
		\node [style=vertex, fill=red] (6) at (-0.5, 4) {};
		\node [style=vertex, fill=red] (7) at (0.5, 5) {};
		\node [style=vertex, fill=red] (8) at (1.5, 4) {};
		\node [style=vertex] (9) at (0.5, 4) {};
		\node [style=vertex] (10) at (-0.5, 5) {};
		\node [style=vertex] (11) at (1.5, 5) {};
		\node [style=vertex, fill=red] (12) at (0.5, 3) {};
		\node [style=vertex, fill=red] (13) at (1.5, 2) {};
		\node [style=vertex] (14) at (0.5, 2) {};
		\node [style=vertex] (15) at (-0.5, 3) {};
		\node [style=vertex, fill=red] (16) at (-0.5, 2) {};
		\node [style=vertex] (17) at (1.5, 3) {};
		\node [] (30) at (-1.5, 3.5) {$q$};
		\node [] (33) at (2.25, 4.5) {$q^{e,+}$};
		\node [] (34) at (2.25, 2.5) {$q^{e,-}$};
	\end{pgfonlayer}
	\begin{pgfonlayer}{edgelayer}
	\draw [thick, bend left, black!30] (-1.4,3.85) to (-0.75,4.5);
	\draw [thick, bend right, black!30] (-1.4,3) to (-0.75,2.5);
	\draw [red] (0) to (2);
	\draw [red] (2) to (4);
	\draw [red, dashed] (8) to (7);
	\draw [red] (7) to (6);
	\draw [red, bend right=20] (8) to (6);
	\draw [red, dashed] (12) to (16);
	\draw [red] (12) to (13);
	\draw [red, bend right=20] (13) to (16);
	
		\draw [very thick] (0) to (1);
		\draw [very thick] (1) to (2);
		\draw [very thick] (2) to (3);
		\draw [very thick] (3) to (4);
		\draw [very thick] (4) to (5);
		\draw [very thick] (5) to (0);
		\draw [very thick, blue] (5) to (2);
		\draw [very thick] (6) to (10);
		\draw [very thick] (10) to (7);
		\draw [very thick] (7) to (11);
		\draw [very thick] (11) to (8);
		\draw [very thick] (8) to (9);
		\draw [very thick] (9) to (6);
		\draw [blue, very thick, dashed] (9) to (7);
		\draw [very thick, blue] (6) to (11);
		\draw [very thick] (16) to (15);
		\draw [very thick] (15) to (12);
		\draw [very thick] (12) to (17);
		\draw [very thick] (17) to (13);
		\draw [very thick] (13) to (14);
		\draw [very thick] (14) to (16);
		\draw [very thick, blue] (13) to (15);
		\draw [blue, very thick, dashed] (14) to (12);
	\end{pgfonlayer}
\end{tikzpicture}\end{minipage}\quad
\begin{minipage}{6cm}
\begin{tikzpicture}
	\begin{pgfonlayer}{nodelayer}
		\node [style=vertex, fill=red] (0) at (-1.5, -1) {};
		\node [style=vertex] (1) at (-1.5, -0.25) {};
		\node [style=vertex] (2) at (-1.5, 0.5) {};
		\node [style=vertex, fill=red] (3) at (0.75, -1) {};
		\node [style=vertex] (4) at (0.75, 0.5) {};
		\node [style=vertex, fill=red] (5) at (0.75, -0.25) {};
		\node [] (6) at (-2.5, -0.25) {$q$};
		\node [] (7) at (2.5, -0.25) {$q^{e,+}=q^{e,-}$};
	\end{pgfonlayer}
	\begin{pgfonlayer}{edgelayer}
	\draw[red, looseness=20, out=60, in=120] (0) to (0);
	\draw[red, bend right=30] (3) to (5);
	\draw [thick, ->, bend left, black!30] (-0.75,-0.25) to (0,-0.25);
		\draw [very thick, bend left=60, looseness=1.25] (0) to (2);
		\draw [very thick, bend right=60, looseness=1.25] (0) to (2);
		\draw [very thick, blue] (0) to (1);
		\draw [very thick, bend left=60, looseness=1.25] (3) to (4);
		\draw [very thick, bend right=60, looseness=1.25] (3) to (4);
		\draw [very thick, blue] (5) to (4);
		\draw [very thick, blue, dashed] (5) to (3);
	\end{pgfonlayer}
\end{tikzpicture}\end{minipage}
\caption{\label{fig:flips to edge rotations}Clockwise and counterclockwise flips for a simple and a double edge in a quadrangulation.}\end{figure}

  \begin{proposition}\label{prop:from rotations to flips}
  Given a quadrangulation $q\in\sQ_n$ with root edge $\rho$, an edge $e\in E(q)\setminus\rho$ and $s\in\{+,-\}$, we have $\Phi(q^{e,s})=\Phi(q)^{c,s}$, where $\Phi$ is the Tutte bijection from Section~\ref{sec:trivial bijection} and $c$ is the corner of $\Phi(q)$ that corresponds to the edge $e$ of $q$.
  \end{proposition}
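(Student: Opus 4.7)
The plan is to verify the proposition by a direct local comparison. Both $q\mapsto q^{e,s}$ on the quadrangulation side and $m\mapsto m^{c,s}$ on the map side (where $m:=\Phi^{-1}(q)$) are local operations: the flip modifies only the two quadrangular faces $F_+,F_-$ sharing $e$, while the edge rotation modifies only the edges $e_+,e_-$ of $f_c$ adjacent to $c$. Tutte's bijection identifies these neighbourhoods, and the two prescriptions are designed precisely so as to coincide under this identification.

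I would first set up the ``dictionary'' given by $\Phi^{-1}$ around $e$. Writing $v$ for the real endpoint of $e$ and $f_c^*$ for its face endpoint, the corner $c$ is the corner of $v$ in the face $f_c$ of $m$ corresponding to $f_c^*$. Traversing $\partial f_c$ from $c$ in the direction that matches the cyclic order of edges incident to $f_c^*$ in $q$, the first edge met is $e_+$, crossed to arrive at corner $c_+$ at a vertex $v_+$; under $\Phi$, the map edge $e_+$ is enclosed in a quadrangular face $F_+$ of $q$ with corner set $\{v,f_+^*,v_+,f_c^*\}$, where $f_+^*$ is the face vertex for the face of $m$ on the far side of $e_+$. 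An analogous description identifies $F_-$, with corners $\{v,f_c^*,v_-,f_-^*\}$, as the face enclosing $e_-$.

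In the generic case $s=+$, $F_+\neq F_-$ and no coincidences occur, so erasing $e$ merges $F_+$ and $F_-$ into a hexagon with cyclic boundary $v,f_+^*,v_+,f_c^*,v_-,f_-^*$; the clockwise flip draws the diagonal $e'$ from $f_+^*$ to $v_-$, splitting the hexagon into $F_+'$ with corners $\{v,f_+^*,v_-,f_-^*\}$ and $F_-'$ with corners $\{f_+^*,v_+,f_c^*,v_-\}$. Applying $\Phi^{-1}$ to $q^{e,+}$, the map edge inside $F_+'$ (joining its two real corners) is the unchanged edge $e_-$ from $v$ to $v_-$, while the map edge inside $F_-'$ joins $v_+$ to $v_-$ and is exactly the edge $e_=$ drawn in $m^{c,+}$; meanwhile the old edge $e_+$ has vanished because $F_+$ no longer exists. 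This reproduces exactly the recipe defining $m^{c,+}$, and the case $s=-$ is completely symmetric.

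It remains to handle the degenerate configurations: $\deg f_c=1$, $\deg f_c=2$, $\deg v=1$, and the ensuing coincidences such as $c_-=c_+$, $e_-=e_+$, or $F_+=F_-$. These are precisely the situations in which the face of $q$ containing $e$ is degenerate in the sense of Section~\ref{sec:from rotations to flips}, and in each of the handful of local pictures the hexagon collapses consistently on both sides: for instance, when $\deg f_c=1$, the face $f_c$ is bounded by the single loop $e_+=e_-$ and the face vertex $f_c^*$ has degree $1$ in $q$, so that $e$ is the internal edge of a degenerate face; both the flip rule for such a face and the edge rotation rule for $\deg f_c=1$ draw a loop and create (or delete) a degree-$1$ vertex in the same way. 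Finally, the hypothesis $e\neq\rho$ ensures that the root corner of $q$ is either outside the modified region or lies on its boundary in a way respected by the common rerooting conventions, so the roots of $\Phi^{-1}(q^{e,s})$ and $m^{c,s}$ agree. The main obstacle I expect is this degenerate-case bookkeeping: the generic case is a transparent calculation once the dictionary is in place, and the real content of the proposition is that the ad hoc conventions on both sides were chosen to match.
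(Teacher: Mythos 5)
Your proposal takes essentially the same route as the paper: read the Tutte bijection locally around the flipped edge, verify the generic case by tracking the two quadrangular faces and the map edges they enclose, then check the degenerate configurations and the root convention. The generic computation is correct, and the recognition that the real content of the proposition is that the ad hoc conventions were engineered to match is accurate.

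One imprecision worth flagging: you assert that $\deg f_c=1$, $\deg f_c=2$, $\deg v=1$ ``are precisely the situations in which the face of $q$ containing $e$ is degenerate.'' That is not right for $\deg f_c=2$: there the two boundary edges $e_+\neq e_-$ of $f_c$ correspond to two \emph{distinct} quadrangular faces $F_+\neq F_-$ adjacent to $e$, so $e$ is \emph{not} an internal edge of a degenerate face. What happens instead is that $v_+=v_-$, the hexagon has fewer than six distinct vertices, and $e_=$ comes out as a loop. The paper deliberately treats this as part of the generic case (noting parenthetically that the same argument applies when $e_=$ is a loop). Only $\deg f_c=1$ and $\deg_m v=1$ (equivalently $F_+=F_-$) correspond to $e$ being internal to a degenerate face of $q$, and those are the two subcases the paper handles separately and that you sketch for $\deg f_c=1$. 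The slip does not break the argument --- your generic hexagon computation goes through with the vertex identification $v_+=v_-$ --- but the stated equivalence is false and should be corrected if this were written up in full.
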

  
  \begin{proof}
Consider the case where $e$ is not an internal edge within a degenerate face, but rather is adjacent to two distinct faces of $q$, within each of which a map edge is drawn by the construction $\Phi$. Orient the edge $e$ away from its endpoint at even distance from the origin and let $f_-$ be the face lying to its right and $f_+$ the one lying to its left. If we take $c$ to be the corner of $\Phi(q)$ that the edge $e$ is issued from, it should be clear that the \emph{map} edges $e_-$ and $e_+$ constructed as a function of $c$ in Section~\ref{sec:edge rotations} correspond to quadrangulation faces $f_-$ and $f_+$ respectively.

Now consider for example the edge-flipped quadrangulation $q^{e,+}$; it is clear that, since $e$ is not the root edge, the parity of distances from the origin is unchanged. The endpoints of $e_-$ are therefore still ``real vertices'' which need to be joined by a map edge lying within the quadrangular face next to the flipped edge $e$: we can draw $e_-$ exactly as before. This is in contrast to the edge $e_+$, which would now cross the flipped edge $e$, and therefore needs to be erased. The edge that replaces $e_+$ is an edge $e_=$ that would form a triangle containing $c$, along with $e_-$ and $e_+$, in the original map (left part of Figure~\ref{fig:flips to edge rotations}).

Furthermore, notice that the fact that the root edge is unchanged in $q^{e,s}$ implies that the root corner of $\Phi(q^{e,s})$ must still be the one that the quadrangulation root edge is issued from. In order for this to be true in the non-trivial cases where the root corner of $\Phi(q)$ is ``split'' by the addition of $e_=$, the root corner must become the part of the corner lying outside the $e_-,e_+,e_=$ triangle, exactly as described in Section~\ref{sec:edge rotations}.

This shows that $\Phi(q^{e,s})=\Phi(q)^{c,s}$ when $e$ is not an internal edge within a degenerate face and $s=+$ (including the case where $e_=$ ends up being a loop, which one can see arise in the situation depicted in Figure~\ref{fig:flips to edge rotations} when edges on the boundary of $f_+$ and $f_-$ are identified); the case of $s=-$ is identical.

Now consider the case where the endpoint $u$ of $e$ at odd distance from the origin has degree 1 in $q$. This corresponds to its degenerate face having two face vertices and one real vertex, and a map loop edge ($e_-=e_+$) being drawn within it by the construction $\Phi$. In this case, we know that $q^{e,s}$, for $s=\pm$, is the quadrangulation that replaces $e$ with an edge drawn between $u$ and the other vertex on the external boundary of the degenerate face. It is immediately apparent that $\Phi(q^{e,s})$ is, indeed, the map which replaces the loop $e_-=e_+$ with an edge $e_=$ having a brand new vertex as an endpoint, which is $\Phi(q)^{c,s}$. The rooting poses no real issues, since $e$ cannot be the root edge of $q$ and the identification of corners between $\Phi(q)$ and $\Phi(q^{e,s})$ is clear.

Finally, the case where the endpoint of $e$ at \emph{even} distance from the root has degree 1 in $q$ is precisely the inverse of the one above.
  \end{proof}

  We have thus shown that the two Markov chains ${\FF}^n$ and $\R^n$ are isomorphic; in particular, they have the same relaxation and mixing time. 
  
  The main result in \cite{CS2019} consisted in the following bounds for the spectral gap of the Markov chain $\F_n$:
  
\begin{theorem*}[C., Stauffer]
Let $\gamma_n$ be the spectral gap of the edge flip Markov chain $\F_n$ on the set $\sQ_n$ of rooted quadrangulations with $n$ faces. There are positive constants $C_1,C_2$ independent of $n$ such that 
$$C_1n^{-\frac{11}{2}}\leq \gamma_n\leq C_2n^{-\frac54}.$$
Consequently, the mixing time for $\F_n$ is $O(n^{13/2})$.
\end{theorem*}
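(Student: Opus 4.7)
The equality $\nu_n=\mu_n$ is essentially immediate from Proposition~\ref{prop:from rotations to flips}: the Tutte bijection $\Phi$ intertwines $\R^n$ and $\FF^n$ into isomorphic chains, so they have identical spectra. The upper bound $\nu_n\leq C_1 n^{-5/4}$ can be obtained by adapting the argument already present in \cite{CS2019} for $\F^n$, which uses a Budzinski-style test function (a statistic that changes slowly along the chain, such as one based on the maximum degree or the number of pendant vertices). Excluding the root edge from the set of flippable edges perturbs the transition operator at only one of the $2n-1$ possible edges per step, so the asymptotic upper bound transfers with essentially no change.

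The heart of the matter is thus the lower bound $\nu_n\geq C_2 n^{-11/2}$, which I would prove via Sinclair's method of probabilistic canonical paths (flows). Concretely, for every ordered pair $(q_1,q_2)\in\sQ_n\times\sQ_n$, I would construct a random path in the transition graph of $\FF^n$ from $q_1$ to $q_2$, and then use the standard bound
\[
\nu_n\;\geq\;\frac{1}{\displaystyle\max_{e=(q,q')}\frac{1}{\pi(q)p_{\FF}(q,q')}\sum_{(q_1,q_2)}\pi(q_1)\pi(q_2)\,\P(e\in\gamma_{q_1 q_2})\,\E[|\gamma_{q_1 q_2}|\mid e\in\gamma_{q_1 q_2}]}.
\]
Since the stationary distribution is uniform ($\pi\equiv 1/|\sQ_n|$) and each transition has probability of order $1/n$, the desired bound reduces to showing that the maximal expected congestion at a chain edge, when the pairs $(q_1,q_2)$ are weighted by $\pi\otimes\pi$, is at most $Cn^{9/2}/|\sQ_n|$ times the typical path length.

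The construction of these probabilistic paths is where Sections~\ref{sec:growth algorithm} and~\ref{sec:canonical paths} come in. The growth algorithm of Section~\ref{sec:growth algorithm} produces a uniform random element of $\sQ_n$ by attaching faces one at a time to some canonical seed; reversing it gives a random way to dismantle any $q\in\sQ_n$ down to that seed. Given $(q_1,q_2)$, the canonical path will first dismantle $q_1$ step by step using a random dismantling, then reassemble into $q_2$ via a random growth sequence, each single step of the dismantling/growth being realised by an $O(n)$-long local manipulation in $\FF^n$ (this is where the switch to canonical paths on quadrangulations, rather than on plane trees via Schaeffer's bijection, crucially gains us independence from rooting artefacts). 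The total path length is therefore $O(n^2)$, as in \cite{CS2019}.

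The main obstacle — and the bulk of Section~\ref{sec:canonical paths} — is the congestion estimate: for any fixed transition $(q,q')$ of $\FF^n$, one must bound the total weight $\sum_{(q_1,q_2)}\pi(q_1)\pi(q_2)\P(e\in\gamma_{q_1q_2})$. The idea is that the distribution of intermediate configurations along a random canonical path is close, in a quantifiable sense, to the uniform measure on $\sQ_k$ for the appropriate $k$, which follows from the uniform-sampling property of the growth algorithm. Combining this with the fact that, given an intermediate state, the pair $(q_1,q_2)$ carries only a bounded amount of auxiliary information needed to reconstruct the endpoints of the path, one controls the congestion by a polynomial factor that, together with the $O(n^2)$ path length, the $\Theta(1/n)$ transition probabilities and $\pi$-uniformity, yields exactly $\nu_n\geq C_2 n^{-11/2}$. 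Section~\ref{sec:final bound} will then assemble these ingredients, and the mixing time bound $O(n^{13/2})$ follows from the standard inequality $\mtime\leq \nu_n^{-1}\log(1/\pi_{\min})$ together with $\log|\sQ_n|=\Theta(n)$.
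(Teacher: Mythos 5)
First, note that the boxed theorem is the Caraceni--Stauffer result for the \emph{root-flipping} chain $\F^n$ (spectral gap $\gamma_n$), which the paper simply cites; your sketch instead targets Theorem~\ref{main theorem} about $\nu_n$ and $\mu_n$. This is survivable --- the paper's canonical-path lower bound, once established for $\FF^n$, transfers to $\F^n$ because $\F^n$ only adds transitions and the transition probabilities $\frac{1}{6n}$ and $\frac{1}{3(2n-1)}$ differ by a bounded factor --- but you should say so explicitly, since as written you prove a bound on the wrong quantity.

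On the lower bound, your high-level plan (probabilistic canonical paths built from the face-collapsing growth algorithm, path length $O(n^2)$, congestion via the uniformity of the growth step) is indeed the paper's. There is, however, a genuine gap in your description of the path. You say the path ``first dismantle[s] $q_1$ step by step \ldots then reassemble[s] into $q_2$ via a random growth sequence,'' but a path that shrinks $q_1$ down to a seed leaves the state space $\sQ_n$, so it cannot be realised by flips in $\FF^n$. The paper's key structural device is precisely what makes this work: every intermediate quadrangulation is split as $L_i\cdot R_i$ by a ``separating face,'' the right part $R_i$ shrinking by one face as the left part $L_i$ grows by one, so the total size stays $n$ throughout; the two halves of the path meet at $F(q_1,q_2)\cdot\rightarrow$, not at a seed (Section~\ref{sec:canonical paths 1} and Figure~\ref{fig:eye quadrangulation}). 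Your phrase ``$O(n)$-long local manipulation in $\FF^n$'' suggests you sense this, but the $L\cdot R$ decomposition is where the construction lives or dies and must be made explicit. Relatedly, your congestion heuristic (``close to uniform,'' ``bounded auxiliary information'') is the right intuition but does not by itself close the estimate; the paper needs the precise telescoping bound of Lemma~\ref{key estimate}, which exploits properties (ii)--(iii) of the growth kernels $g_n$ to give $\sum_{q,q'}\P_q^{q'}\{L_a=l,\,R_{n-b-1}=r\}\leq 12^{2n-a-b-1}$, together with Corollary~\ref{cor:few corners}, which shows that a flip $(q,e,s)$ occurring in a subpath $P(q',c')$ determines $\coll{q',c'}$ up to two choices. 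These two facts, not ``closeness to uniform'' in the abstract, are what make Proposition~\ref{prop:final estimate} work. Finally, your exponent accounting has a small slip: ``$Cn^{9/2}/|\sQ_n|$ times the typical path length'' would overshoot to $n^{15/2}$ after multiplying by $1/(\pi p)=\Theta(n|\sQ_n|)$; the correct arithmetic is $6n|\sQ_n|\cdot 32n^2\cdot 8\cdot 12^{n+1}/|\sQ_n|^2$ combined with $|\sQ_n|\gtrsim 12^n n^{-5/2}$, yielding $n^{11/2}$ as in Section~\ref{sec:final bound}.
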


  While the upper bound above for the spectral gap of $\F_n$ immediately yields a lower bound for the relaxation time of ${\FF}^n$ and therefore $\R^n$, an upper bound for the relaxation time of ${\FF}^n$ cannot trivially be gleaned from \cite{CS2019}. The rest of this paper will therefore be devoted to analysing the chain ${\FF}^n$ to obtain an upper bound which applies to the edge rotation Markov chain. Though the general strategy is not dissimilar to the one employed in \cite{CS2019}, some ad hoc constructions and ideas will be necessary; as a result, we will have a partially new proof of an upper bound for the relaxation time of the original chain $\F^n$ which (mostly) does not rely on the Cori-Vanquelin-Schaeffer correspondence with plane trees, and should therefore be better suited for further generalisations.


\section{Growing quadrangulations uniformly at random}\label{sec:growth algorithm}
Consider the following operation which, given a quadrangulation $q\in\sQ_n$ (with $n>1$) and a corner $c$ of $q$, yields a quadrangulation $\coll{q,c}$ in $\sQ_{n-1}$. If the face $f$ of $q$ containing $c$ has $4$ distinct vertices and $c,c_1,c_2,c_3$ is a clockwise contour of $f$, then ``collapse'' $f$ by identifying the edge joining $c$ to $c_1$ with the one joining $c$ to $c_3$ and the edge joining $c_1$ to $c_2$ with the one joining $c_3$ to $c_2$ as in Figure~\ref{fig:collapse a face}, thus identifying the vertices of corners $c_1$ and $c_3$. If $f$ has fewer than $4$ distinct vertices then it has exactly $3$, one of which is adjacent to two corners $c_1, c_3$ of $f$, where $c_1,c_2,c_3,c_4$ is a clockwise contour; in this case, whatever $c$, identify the edge joining $c_1$ to $c_2$ with the edge joining $c_4$ to $c_1$ and the edge joining $c_2$ to $c_3$ with the edge joining $c_3$ to $c_4$, thus also identifying the vertices of $c_2$ and $c_4$. Note that the latter procedure applies to the case of $f$ being a degenerate face (lower part of Figure~\ref{fig:collapse a face}). If the root corner does not belong to $f$, it is simply preserved; if it belongs to $f$, then we root the quadrangulation $\coll{q,c}$, in such a way that the root edge is the collapsed image of the original root edge, oriented as before.

We shall say that two quadrangulations $q\in\sQ_n$ and $q'\in\sQ_{n-1}$ \emph{differ by collapsing a face} if there is a corner $c$ of $q$ such that $q'=\coll{q,c}$.

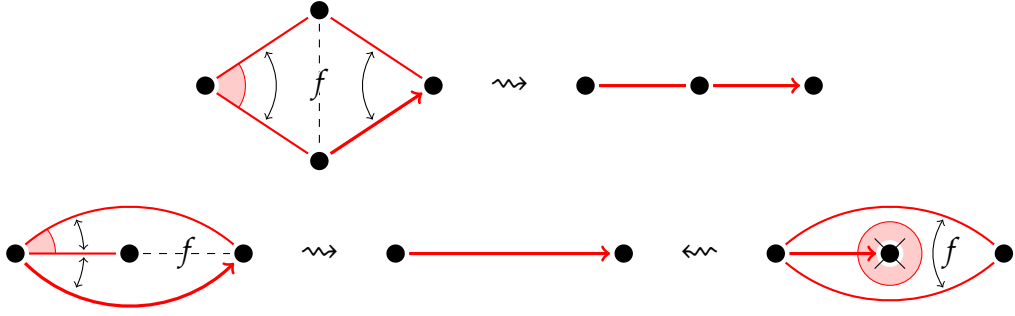
\begin{figure}\centering
\begin{tikzpicture}
	\begin{pgfonlayer}{nodelayer}
		\node [style=vertex] (0) at (-1.5, -0) {};
		\node [style=vertex] (1) at (0, 1) {};
		\node [style=vertex] (2) at (1.5, -0) {};
		\node [style=vertex] (3) at (0, -1) {};
		\node (f) at (0,0) {\contour{white}{$f$}};
		
		\node [style=vertex] (0b) at (3.5, -0) {};
		\node [style=vertex] (1b) at (5, 0) {};
		\node [style=vertex] (2b) at (6.5, -0) {};
		
		\node [] (->) at (2.5, 0) {$\rightsquigarrow$};
	\end{pgfonlayer}
	\begin{pgfonlayer}{edgelayer}
	
	\begin{scope}
	\clip (1.center) to (2.center) to (3.center) to (0.center) to (1.center);
	\draw[draw=red, fill=red!20] (0) circle (15pt);
	\end{scope}
	
	\draw[<->, bend left] (-0.7,0.45) to (-0.7,-0.45);
	\draw[<->, bend right] (0.7,0.45) to (0.7,-0.45);
	\draw[quad] (0) to (1) to (2) to (3) to (0);
	\draw[quad, very thick,->] (3) to (2);
	\draw[dashed] (1) to (3);
	
	\draw[quad, very thick] (0b) to (1b) [->] to (2b);
	
	\end{pgfonlayer}
\end{tikzpicture}\\[5pt]
\begin{tikzpicture}
	\begin{pgfonlayer}{nodelayer}
		\node [style=vertex] (0) at (-1.5, -0) {};
		\node [style=vertex] (1) at (0, 0) {};
		\node [style=vertex] (2) at (1.5, -0) {};
		\node (f) at (0.75,0) {\contour{white}{$f$}};
		
		\node [style=vertex] (0b) at (3.5, -0) {};
		\node [style=vertex] (2b) at (6.5, -0) {};
		
		\node [] (->) at (2.5, 0) {$\rightsquigarrow$};
		
		\node [style=vertex] (0c) at (8.5, -0) {};
		\node [style=vertex] (1c) at (10, 0) {};
		\node [style=vertex] (2c) at (11.5, -0) {};
		\node (fc) at (10.8,0) {\contour{white}{$f$}};
		\draw (9.8,0.2) to (10.2,-0.2);
		\draw (10.2,0.2) to (9.8,-0.2);
		
		\node [] (<-) at (7.5, 0) {$\leftsquigarrow$};
	\end{pgfonlayer}
	\begin{pgfonlayer}{edgelayer}
	
	\begin{scope}
	\clip[bend left=45] (0.center) to (2.center) [bend left=0] to (0.center);
	\draw[draw=red, fill=red!20] (0) circle (15pt);
	\end{scope}
	
	\draw[<->, bend left=10] (-0.7,0.45) to (-0.6,0.05);
	\draw[<->, bend right=10] (-0.7,-0.45) to (-0.6,-0.05);
	\draw[quad, bend left=45] (0.center) to (2.center);
	\draw[quad, bend left=45, very thick, <-] (2) to (0);
	\draw[quad] (0) to (1);
	\draw[dashed] (1) to (2);
	
	\draw[quad, very thick,->] (0b) to (2b);
	
	\draw[draw=red, fill=red!20] (1c) circle (12pt);
	\draw[<->, bend right] (10.7,0.45) to (10.7,-0.45);
	\draw[quad, bend left=45] (0c.center) to (2c.center) to (0c.center);
	\draw[quad, very thick, ->] (0c) to (1c);
	
	\end{pgfonlayer}
\end{tikzpicture}
\caption{\label{fig:collapse a face}Collapsing a face in a quadrangulation.}	
\end{figure}

What we need is a ``hierarchy'' like the one described for coloured plane trees in Section 5 of \cite{CS2019}, but for quadrangulations, where the adjacency condition of differing by erasing a leaf is replaced by the one given by collapsing faces. What we wish to produce is a collection of mappings $g_n:\sQ_n\times \sQ_{n-1}\to \mathbb{R}_{\geq 0}$ with the following properties:
\begin{itemize}
	\item[(i)] $g_n(q,q')=0$ if $q'$ cannot be obtained from $q$ by collapsing a face;
	\item[(ii)] $\displaystyle\sum_{q'\in\sQ_{n-1}}g_n(q,q')=1$ for all $q$ in $\sQ_n$;
	\item[(iii)] $\displaystyle\sum_{q\in\sQ_n}g_n(q,q')=\frac{|\sQ_n|}{|\sQ_{n-1}|}$ for all $q'$ in $\sQ_{n-1}$.
\end{itemize}

Such a collection of mappings $g_n$ can be built explicitly from the mappings $f_n$ given in Section~5 of \cite{CS2019}. In order to do this, we need to briefly recall some notation and standard results.

\begin{df}
A \emph{labelled tree} is a plane tree $t$ (i.e.~a rooted planar map with a single face) endowed with a labelling $l:V(t)\to \mathbb{Z}$ such that
\begin{itemize}
\item if $\emptyset$ is the origin of $t$, $l(\emptyset)=0$;
\item for any vertex $v\in V(t)\setminus\{\emptyset\}$, $|l(v)-l(p(v))|\in \{1,-1,0\}$, where $p(v)$ denotes the parent of $v$.	
\end{itemize}
We shall call $\LT_n$ the set of all labelled trees with $n$ edges, and conventionally set $\LT_0=\{\bullet\}$ to be the set containing the graph with a single vertex labelled 0.
\end{df}

We shall also write $\sQ_n^\bullet$ for the set of all pairs $(q,\delta)$, where $q\in\sQ_n$ and $\delta\in V(q)$, i.e.~for the set of all \emph{pointed} (rooted) quadrangulations of the sphere with $n$ faces. We shall conventionally define the set $\sQ_0=\{\rightarrow\}$ as the one containing a rooted planar map with a single edge and two distinct vertices; as a consequence, $\sQ_0^\bullet$ has two elements. Also by convention, but consistently with our previous definition, we shall set $\coll{q,c}$, where $c$ is any corner in a quadrangulation $q\in\sQ_1$, to be the ``root-only map'' $\rightarrow\in\sQ_0$.

Labelled trees (and pointed quadrangulations) will be useful thanks to the Cori-Vanquelin-Schaeffer correspondence (see~\cite{Sch98}), which is an explicit bijective construction $\phi:\LT_n\times\{-1,1\}\to\sQ_n^\bullet$ transforming tree labels into quadrangulation graph distances: given $t\in\LT_n$ and $\epsilon\in\{-1,1\}$, the mapping $\phi$ naturally induces an identification between vertices of $t$ and vertices of $\phi(t,\epsilon)$ other than the distinguished vertex such that, if $l$ is the labelling of $t$, we have $l(v)=\dgr(v,\delta)-\dgr(\delta,\emptyset)$, where $v$ is interpreted as a vertex of $t$ in the left hand side of the equation and as a vertex of $\phi(t,\epsilon)$ in the right hand side, $\emptyset$ is the origin of $\phi(t,\epsilon)$ and $\delta$ its distinguished vertex, and $\dgr$ is the graph distance on the vertex set of $\phi(t,\epsilon)$.

In Section~5 of \cite{CS2019} we provided a collection of maps $f_n:\LT_n\times\LT_{n-1}\to\mathbb{R}$ with the exact properties stated for $g_n$ above, where every instance of $\sQ_k$ is replaced by $\LT_k$ and (i) is replaced by

\begin{itemize}
\item[(i)]	$f_n(t,t')=0$ if $t'$ cannot be obtained from $t$ by erasing a leaf.
\end{itemize}

In particular, we showed that such properties hold for the collection of maps constructed recursively as follows:
\begin{itemize}
\item if $(t,t')\in\LT_n\times\LT_{n-1}$ do not differ by erasing a leaf, $f_n(t,t')=0$;
\item if $(t,t')\in\LT_1\times\LT_{0}$, set $f_1(t,t')=1$; 
\item if $(t,t')\in\LT_n\times\LT_{n-1}$, where $n>1$, differ by erasing a leaf, consider the subtrees $L(t),L(t')$ containing the leftmost child of the root vertex and its descendants in $t,t'$ respectively; if $|L(t)|=i$ and $|L(t')|=i-1$ for some $i>0$, set
$$f_n(t,t')=\frac{i(i+1)(3n-2i-1)}{(n-1)n(n+1)}f_i(L(t),L(t'));$$
otherwise set $R(t)$, $R(t')$ to be the trees obtained by erasing $L(t), L(t')$ from $t,t'$ (as well as the edge joining the root vertex to its leftmost child); we then have $|R(t)|=|R(t')|+1=i$ for some $i>0$ and we set
$$f_n(t,t')=\frac{i(i+1)(3n-2i-1)}{(n-1)n(n+1)}f_i(R(t),R(t')).$$
\end{itemize}

The main reason why the collection of mappings $f_n$ can be used to construct mappings $g_n$ which satisfy the properties we require is the following:
\begin{lemma}\label{leaf erasing to face collapsing}
	If $(t,t')\in\LT_n\times\LT_{n-1}$, $\epsilon=\pm 1$ and $f_n(t,t')>0$, then $F(\phi(t',\epsilon))$ is obtained from $F(\phi(t,\epsilon))$ by collapsing a face, where $F:\bigcup_{i\geq0}\sQ_i^\bullet\rightarrow\bigcup_{i\geq0}\sQ_i$ is the mapping which forgets the distinguished vertex.
\end{lemma}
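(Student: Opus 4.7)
The plan is to unpack the explicit construction of the Cori-Vanquelin-Schaeffer bijection $\phi$ and match it with the collapse operation. Recall that $\phi(t,\epsilon)$ is obtained by enumerating the corners of $t$ in clockwise contour order and, for each corner $c$, drawing a \emph{CVS edge} from $c$ to its \emph{successor} $\mathrm{succ}(c)$---the next corner (cyclically) with label $l(c)-1$, or to $\delta$ if no such corner exists---and then erasing all original tree edges. This construction induces a canonical bijection between edges of $t$ and faces of $\phi(t,\epsilon)$: each tree edge $e$ corresponds to the face $f_e$ occupying the planar region formerly hosting $e$. I will exhibit a corner $c$ of $f_e$ such that $\coll{\phi(t,\epsilon),c}=\phi(t',\epsilon)$; applying $F$ then yields the lemma.

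The key observation is that, at the contour level, erasing the leaf $v$ deletes the unique corner $c_k$ at $v$ and merges the two adjacent corners $c_{k-1},c_{k+1}$ of $p(v)$ into a single corner $c^*$ at $p(v)$; every other corner is preserved, and every CVS successor computation whose target lies outside $\{c_{k-1},c_k,c_{k+1}\}$ returns the same corner in $t$ and $t'$ (with any target in the triple replaced by $c^*$). Consequently, $\phi(t,\epsilon)$ and $\phi(t',\epsilon)$ differ only in a local neighborhood of $f_e$: the three CVS edges emanating from the triple in $t$ are to be replaced by the single CVS edge emanating from $c^*$ in $t'$. The structure of $f_e$---which may be degenerate (three distinct vertices on its boundary, with one of them adjacent to two corners of $f_e$) or non-degenerate (four distinct vertices)---is described by a direct case analysis on $d:=l(v)-l(p(v))\in\{-1,0,1\}$, computing $\mathrm{succ}(c_{k-1}),\mathrm{succ}(c_k),\mathrm{succ}(c_{k+1})$ and tracking whether their targets coincide at common vertices (the degenerate cases typically arise when they do, as in $d=0$ where all three successors agree, or in $d=1$ where $\mathrm{succ}(c_k)=c_{k+1}$ and $\mathrm{succ}(c_{k-1})=\mathrm{succ}(c_{k+1})$). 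In each case one then picks the corner $c$ of $f_e$ at which $\coll{\cdot,c}$ performs precisely the pairwise edge and vertex identifications matching those needed to transform $\phi(t,\epsilon)$ into $\phi(t',\epsilon)$.

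The main obstacle will be the bookkeeping in the various cases, particularly in the non-degenerate case where the two ``diagonal'' collapses of a non-degenerate quadrangular face give different results and only one matches $\phi(t',\epsilon)$; picking it correctly amounts to tracking, via the CVS construction, which pair of opposite edges of $f_e$ corresponds to the single CVS edge from $c^*$. A further technicality concerns rooting: one must check that the rerooting convention of $\coll{\cdot,c}$ agrees with $\phi$'s choice of root edge in $t'$, with particular care in the edge case where $e$ happens to be the root edge of $t$ (this can occur when $v$ is the leftmost child of the origin), as well as in the ``small'' case where $p(v)$ has only a single tree-corner and so $c_{k-1}=c_{k+1}$ already in $t$. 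Once this dictionary is set up, the verification reduces to short case-by-case computations.
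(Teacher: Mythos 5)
Your approach is essentially the one the paper takes: unpack the explicit CVS construction (contour order, targets/successors), identify tree edges with quadrangulation faces, and do a case analysis on the label difference $d=l(v)-l(p(v))$, paying attention to the root-corner convention. The paper organizes the cases a little differently, grouping $d\in\{0,1\}$ together (both yield a degenerate face in $\phi(t,\epsilon)$ whose two CVS edges share a target) and treating $d=-1$ separately, further subdividing the $d=-1$ case according to whether the relevant target is the corner of $v$ itself; this last subcase is the one that requires caution, since the collapsed face then contains $\delta$ and the pointing gets displaced, which is harmless only because $F$ forgets it. Your note ``$\coll{\phi(t,\epsilon),c}=\phi(t',\epsilon)$'' is a slight abuse (the $\operatorname{coll}$ operation is defined on $\sQ_n$, and in that subcase it only holds after applying $F$), but you do say ``applying $F$ then yields the lemma,'' so this is a notational rather than a substantive issue. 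Otherwise this is the same route, and your flagged edge cases (the erased edge being the root edge, $p(v)$ having a single corner) are reasonable points of care.
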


\begin{figure}\centering
\begin{tikzpicture}
	\begin{pgfonlayer}{nodelayer}
		\node [style=vertex, label=left:$a$] (0) at (0, 0) {};
		\node [style=vertex, label=above:$a$] (1) at (0, 1) {};
		\node [style=vertex, label=right:$a-1$] (2) at (2, 0.5) {};
	\end{pgfonlayer}
	\begin{pgfonlayer}{edgelayer}
		\draw[line width=2pt, brown] (0) to (1);
		
		\draw[quad, out=120, looseness=2.8] (0) to (2);
		\draw[quad, bend left=10] (1) to (2);
		\draw[quad, out=40, in=180, looseness=1] (0) to (2);
	\end{pgfonlayer}
\end{tikzpicture}
\begin{tikzpicture}
	\begin{pgfonlayer}{nodelayer}
		\node [style=vertex, label=left:$a$] (0) at (0, 0) {};
		\node [style=vertex, label=above:$a+1$] (1) at (0, 1) {};
		\node [style=vertex, label=right:$a-1$] (2) at (2, 0.5) {};
	\end{pgfonlayer}
	\begin{pgfonlayer}{edgelayer}
		\draw[line width=2pt, brown] (0) to (1);
		
		\draw[quad, out=120, looseness=3] (0) to (2);
		\draw[quad, bend left=25] (1) to (0);
		\draw[quad, out=40, in=180, looseness=1] (0) to (2);
	\end{pgfonlayer}
\end{tikzpicture}
\begin{tikzpicture}
	\begin{pgfonlayer}{nodelayer}
		\node [style=vertex, label=left:$a$] (0) at (0, 0) {};
		\node [style=vertex, label=above:$a-1$] (1) at (0, 1) {};
		\node [style=vertex, label=below:$a-1$] (2) at (1.7, 0.5) {};
		\node [style=vertex, label=right:$a-2$] (3) at (2.5, 0.5) {};
	\end{pgfonlayer}
	\begin{pgfonlayer}{edgelayer}
		\draw[line width=2pt, brown] (0) to (1);
		
		\draw[quad, bend left] (1) to (3);
		\draw[quad, bend left=10] (0) to (2);
		\draw[quad, bend left] (0) to (1);
		\draw[quad, bend left] (2) to (3);
	\end{pgfonlayer}
\end{tikzpicture}
\caption{\label{fig:erasing leaves to collapsing faces}Erasing a leaf (i.e.~a degree 1 vertex $v$ and the edge $(v,p(v))$ joining it to its parent) from a labelled tree $t\in\LT_n$, whatever the labels, corresponds to collapsing the face built around the edge $(v,p(v))$ in the pointed quadrangulation $\Phi(t,\epsilon)$.}	
\end{figure}
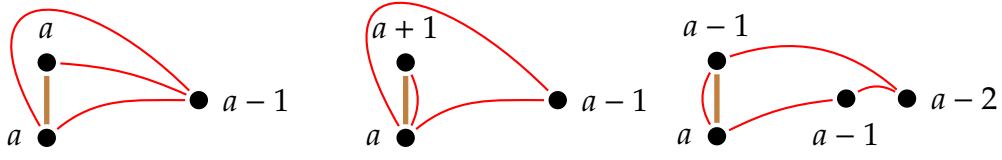

\begin{proof}
	The proof does of course rely on the specific definition of $\phi$ (and is the only part of this paper that does). The quadrangulation $\phi(t,\epsilon)$ can be drawn using the vertex set of $t$ and an added vertex $\delta$ as follows: consider a counterclockwise (cyclic) contour $c_1,\ldots,c_{2n}$ of the one face of $t$; for each $i$, draw a quadrangulation edge joining $c_i$ to its ``target'' corner, which we will take to be the next corner in the contour whose vertex has strictly smaller label than the vertex of $c_i$, or the corner around $\delta$ if the label of the vertex that $c_i$ belongs to is minimal. 
	
	Suppose $t'$ is obtained from $t$ by erasing a leaf $v$ and the edge $(v,p(v))$. If $l(v)=l(p(v))$ or $l(v)=l(p(v))+1$, then the the two quadrangulation edges issued from the corner of $t$ right before the one around $v$ and the one right after $v$ have the same ``target'' corner and enclose a degenerate face of the quadrangulation $\phi(t,\epsilon)$ (see Figure~\ref{fig:erasing leaves to collapsing faces}). Erasing $v$ collapses those two edges into a single edge; targets for corners other than the one around $v$ (which is eliminated) are unaffected. Furthermore, there is no issue with the rooting: $\phi(t,\epsilon)$ is rooted in the edge issued by the root corner of $t$, with an orientation given by $\epsilon$: such an edge does correspond to the edge issued from the root corner of $t'$.
	
	If $l(v)<l(p(v))$, then the matter slightly more complicated. The contour of $t'$ has two fewer corners than the contour of $t$: the quadrangulation edges $e_1$ and $e_2$ issued from the corner before $v$ and the corner around $v$ are eliminated. Let $c$ be the target corner of the corner immediately \emph{after} $v$, which must be around a vertex labelled $l(v)$. Suppose $c$ is \emph{not} the corner of $v$; then all corners having the corner of $v$ as a target in $t$ have $c$ as a target in $t'$: equivalently, all edges adjacent to $v$ in $\phi(t,\epsilon)$ become adjacent to the vertex of $c$ in $\phi(t',\epsilon)$. Eliminating edges $e_1,e_2$ and rerouting all edges adjacent to $v$ to the vertex of $c$ exactly amounts to collapsing the quadrangulation face which encloses the tree edge $(v,p(v))$. If $\tilde{c}$ is the \emph{quadrangulation} corner in $\phi(t,\epsilon)$ corresponding to the edge $e_1$, oriented towards $v$, the quadrangulation $F(\phi(t',\epsilon))$ is $\coll{F(\phi(t,\epsilon)),\tilde{c}}$ (again, the rooting is correctly preserved).
	
	If $c$ is the corner around $v$, then $l(v)$ is minimal and $v$ is the unique vertex carrying label $l(v)$; in that case, the face enclosing the edge $(v,p(v))$ is again degenerate and contains the vertex $\delta$, which is the furthest one from the origin in the quadrangulation $\phi(t,\epsilon)$. In this case, the quadrangulation $\phi(t',\epsilon)$ can be obtained from $t$ and $\phi(t,\epsilon)$ by simply eliminating the original pointed vertex $\delta$ from $\phi(t,\epsilon)$, erasing the tree edge $(v,p(v))$ and renaming vertex $v$ to $\delta$: that way, all one needs to do is erase the two quadrangulation edges that were drawn from the corner of $v$ and from the corner after $v$, which amounts to collapsing the degenerate face that corresponded to the tree edge $(v,p(v))$. The quadrangulation $\phi(t',\epsilon)$ also has its pointing ``moved'' (which is natural, since $\phi(t,\epsilon)$ was pointed in a vertex within the face to be collapsed), but this has no bearing on $F(\phi(t,\epsilon))$.
\end{proof}

\begin{lemma}
	The collection of mappings $g_n:\sQ_n\times\sQ_{n-1}\to\mathbb{R}$ defined as
	$$g_n(q,q')=\frac{1}{n+2}\sum_{v\in V(q)}\sum_{v'\in V(q')}1_{\epsilon_{q,v}=\epsilon_{q',v'}}f_n(t_{q,v},t_{q',v'}),$$
	where $f_n:\LT_n\times\LT_{n-1}\to\mathbb{R}_{\geq 0}$ is defined recursively as described before and where $\phi(t_{q,v},\epsilon_{q,v})=(q,v)$ and $\phi(t_{q',v'},\epsilon_{q',v'})=(q',v')$, satisfies properties (i), (ii) and (iii).
\end{lemma}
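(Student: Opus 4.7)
The overarching plan is to reduce each of (i), (ii), (iii) to the corresponding property of $f_n$ established in \cite{CS2019}, using the Cori--Vanquelin--Schaeffer bijection $\phi:\LT_n\times\{-1,1\}\to\sQ_n^\bullet$ as the transfer mechanism: after fixing one of the arguments of $g_n$ and a pointing, the bijection turns the remaining sum over pointed quadrangulations (with sign matching) into a sum over labelled trees with a fixed sign, at which point the matching property of $f_n$ applies directly. The symmetric form of the definition of $g_n$, with its explicit $1/(n+2)$ normalisation and the indicator $1_{\epsilon_{q,v}=\epsilon_{q',v'}}$, is precisely what makes the bookkeeping work out.

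Property (i) is immediate from Lemma~\ref{leaf erasing to face collapsing}: if any term in the double sum defining $g_n(q,q')$ is positive, then there exist pointings $v\in V(q)$, $v'\in V(q')$ with $\epsilon_{q,v}=\epsilon_{q',v'}$ and $f_n(t_{q,v},t_{q',v'})>0$, and the lemma then says that $q'=F(\phi(t_{q',v'},\epsilon_{q',v'}))$ differs from $q=F(\phi(t_{q,v},\epsilon_{q,v}))$ by a face collapse. Contrapositively, if $q'$ is not obtained from $q$ by collapsing a face, every summand vanishes.

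For (ii), I would fix $q\in\sQ_n$ and exchange the order of summation. After fixing a pointing $v\in V(q)$, the inner sum over $(q',v')\in\sQ_{n-1}^\bullet$ constrained by $\epsilon_{q',v'}=\epsilon_{q,v}$ is, via $\phi$, exactly $\sum_{t'\in\LT_{n-1}}f_n(t_{q,v},t')=1$ by property (ii) for the $f_n$ family. Since $|V(q)|=n+2$, the prefactor $1/(n+2)$ delivers the desired value $1$. Property (iii) is handled symmetrically by fixing $q'\in\sQ_{n-1}$: the inner sum becomes $\sum_{t\in\LT_n}f_n(t,t_{q',v'})=|\LT_n|/|\LT_{n-1}|$ by property (iii) for $f_n$, and the bijection $\phi$ combined with $|V(q)|=n+2$ for $q\in\sQ_n$ gives $|\LT_k|=\tfrac12|\sQ_k^\bullet|=\tfrac{(k+2)|\sQ_k|}{2}$, so this ratio equals $\tfrac{(n+2)|\sQ_n|}{(n+1)|\sQ_{n-1}|}$; multiplying by $|V(q')|=n+1$ and the prefactor $1/(n+2)$ collapses everything to $|\sQ_n|/|\sQ_{n-1}|$.

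There is no genuine obstacle in this argument, which is essentially bookkeeping: the one thing to keep straight is the interplay between the $(n+2)$ normalisation in the definition of $g_n$, the vertex-count $|V(q)|=n+2$ for $q\in\sQ_n$, and the two-to-one count $|\sQ_k^\bullet|=2|\LT_k|$ given by $\phi$. The definition was clearly engineered so that these three factors combine correctly to produce $1$ in (ii) and $|\sQ_n|/|\sQ_{n-1}|$ in (iii).
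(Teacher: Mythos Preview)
Your argument is correct and follows essentially the same route as the paper's proof: property (i) via Lemma~\ref{leaf erasing to face collapsing}, and properties (ii) and (iii) by fixing a pointing, using $\phi$ to convert the sign-restricted sum over pointed quadrangulations into a sum over labelled trees, and then invoking the corresponding property of $f_n$. The only cosmetic difference is that the paper writes out the chain of equalities explicitly and records the final identity as $\frac{|\LT_n|(n+1)}{|\LT_{n-1}|(n+2)}=\frac{|\sQ_n|}{|\sQ_{n-1}|}$, whereas you spell out the intermediate step $|\LT_k|=\tfrac{(k+2)|\sQ_k|}{2}$.
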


\begin{proof}
This is straightforward from the properties of $f_n$.

Indeed, property (i) for $g_n$ is a consequence of Lemma~\ref{leaf erasing to face collapsing}: if there are $v,v'$ in $V(q)$ and $V(q')$ respectively such that $\phi^{-1}(q,v)=(t,\epsilon)$ and $\phi^{-1}(q',v')=(t',\epsilon)$, where $t\in\LT_n$ and $t'\in\LT_{n-1}$ are such that $f_n(t,t')\neq 0$, then, since $t$ and $t'$ differ by erasing a leaf, the quadrangulations $q=F(\phi(t,\epsilon))$ and $q'=F(\phi(t',\epsilon))$ differ by collapsing a face.

As for property (ii), we have
$$\sum_{q'\in\sQ_{n-1}}g_n(q,q')=\frac{1}{n+2}\sum_{v\in V(q)}\sum_{(q',v')\in \sQ^\bullet_{n-1}}1_{\epsilon_{q,v}=\epsilon_{q',v'}}f_n(t_{q,v},t_{q',v'})=$$
$$=\frac{1}{n+2}\sum_{v\in V(q)}\sum_{(t',\epsilon)\in\LT_{n-1}\times\{1,-1\}}1_{\epsilon_{q,v}=\epsilon}f_n(t_{q,v},t')=
\frac{1}{n+2}\sum_{v\in V(q)}\sum_{t'\in\LT_{n-1}}f_n(t_{q,v},t')=
$$
$$=\frac{1}{n+2}\sum_{v\in V(q)}1=1.$$

Similarly, for (iii) one has
$$\sum_{q\in\sQ_{n}}g_n(q,q')=\frac{1}{n+2}\sum_{v'\in V(q')}\sum_{t\in\LT_n}f_n(t,t_{q',v'})=\frac{|\LT_n|(n+1)}{|\LT_{n-1}|(n+2)}=\frac{|\sQ_n|}{|\sQ_{n-1}|}.$$
\end{proof}

\section{Canonical paths}\label{sec:canonical paths}

Given two quadrangulations $q,q'\in\sQ_n$, we intend to build a random canonical path from $q$ to $q'$, that is a probability measure $\P_{q\rightarrow q'}$ on the set $\Gamma_{q\rightarrow q'}$ of all sequences $(q_i,e_i,s_i)_{i=1}^N$ such that
\begin{itemize}
\item for all $i=1,\ldots,N$, we have $q_i\in\sQ_n$ and $e_i\in E(q_i)\setminus\{\rho\}$, where $\rho$ is the root edge of $q_i$, while $s_i=\pm$;
\item $q_{i+1}=q_i^{e_i,s_i}$ for $i=1,\ldots,N-1$;
\item $q_1=q$ and $q_N^{e_N,s_N}=q'$.
\end{itemize}

Note that our aim is to construct these paths in such a way that, given an edge flip $(q,e,s)$, the quantity $\sum_{	q,q'\in\sQ_n}\P_{q\rightarrow q'}\{\gamma \in\Gamma_{q\rightarrow q'}\st (q,e,s)\mbox{ appears in }\gamma\}$ is as small as possible.


The main idea of the construction is to have a canonical way of splitting intermediate quadrangulations in the path into two parts: ideally, we want what we shall call the \emph{right part}, which shrinks with time, to retain as much memory of the initial quadrangulation $q$ as possible, while the \emph{left part} is a growing, increasingly accurate version of $q'$ (see Figure~\ref{fig:eye quadrangulation} for the decomposition). 

Because, however, our canonical split requires an external face to act as a ``separator'' between the left and right parts, it is not possible -- or at least it is not convenient -- to grow the complete quadrangulation $q'$ on the left, since we we have space for a quadrangulation of size at most $n-1$. That is why we select a mapping $F:\sQ_n^2\to\sQ_{n-1}$ (with certain properties) 
and construct the random path from $q$ to $q'$ as
\begin{itemize}
\item a random path from $q$ to a quadrangulation whose left part is $F(q,q')$ and whose right part is empty, distributed according to a probability which will later be called $\P_q^{F(q,q')}$;
\item a random path from the final quadrangulation of the path above to $q'$, whose reverse path is  distributed according to the probability $\P_{q'}^{F(q,q')}$.
\end{itemize}


Our objective will be to describe a random flip path distributed according to the probability measure $\P_q^{F(q,q')}$; this will consist of $n$ concatenated flip subpaths, of which
\begin{itemize}
\item the first is special: it collapses one appropriately chosen random face of $q$ and establishes a ``separating face'' to the right of the root edge; at the end of this flip sequence, the face directly to the right of the root edge separates an empty left quadrangulation $L_0$ from a right quadrangulation $R_0$ of size $n-1$;
\item the $(i+1)$th flip subpath (for $i=1,\ldots,n-1$) turns a quadrangulation with left part $L_{i-1}$ and right part $R_{i-1}$ into a quadrangulation with right part $R_i=\coll{R_{i-1},c}$ for some $c$, and left part $L_i$, where $L_i$ has an additional face with respect to $L_{i-1}$ (in the strong sense that $L_{i-1}=\coll{L_i,c'}$ for some corner $c'$ of $L_i$). Given $(L_{i-1},R_{i-1})$, the quadrangulations $L_i,R_i$ are random, distributed in a way that is based on the growth algorithm from Section~\ref{sec:growth algorithm}. The sequence of flips constituting this subpath will be later denoted by $P((L_{i-1},R_{i-1}),(L_{i},R_{i}))$, and itself consists of three distinct phases:
\begin{itemize}
\item \emph{right phase}: the face of $R_{i-1}$ containing $c$ is replaced, via a local sequence of flips, by a degenerate face, which is then moved within $R_{i-1}$ until it becomes adjacent to the ``separating face'';
\item \emph{central phase}: this is a very short sequence of just 4 edge flips which move the extra degenerate face from one side of the ``separating face'' to the other, making it now part of the left portion of the quadrangulation;
\item \emph{left phase}: the extra degenerate face is moved to the appropriate location in $L_{i-1}$ and then possibly replaced by a non-degenerate face via local flips in order to create the left quadrangulation $L_i$.
\end{itemize}
\end{itemize}

In conclusion, the full canonical path from $q$ 	to $q'$ will consist of
\begin{itemize}
\item a flip sequence modifying $q$ to have a separating face, with a quadrangulation $R_0$ of size $n-1$ on the right and an ``empty quadrangulation'' $L_0$ on the left;
\item for each $i=1,\ldots,n-1$, a right phase, central phase and left phase, after which a face has moved from $R_{i-1}$ into $L_{i-1}$, thus yielding left and right parts $L_i, R_i$, where $|R_i|=n-i-1=n-1-|L_i|$, on either side of a separating face. At the end of this whole process $R_{n-1}$ is empty and $L_{n-1}$ is $F(q,q')$;
\item $n-1$ sequences, each with a reverse left phase, reverse central phase, reverse right phase, which move a face from the left part of the quadrangulation to the right part, ending with a left part of size 0 and a right part of size $n-1$;
\item a final sequence which ``dismantles'' the separating face and moves it to the appropriate location to yield $q'$.
\end{itemize}

The next subsection will formalise the idea of a ``separating face'' and give the description of our canonical left-right decomposition, as well as the law of the sequence $(L_i,R_i)_{i=0}^{n-1}\in\prod_{i=0}^{n-1}\sQ_i\times\sQ_{n-1-i}$ as a function of the pair $(q,\tilde{q})\in\sQ_n\times\sQ_{n-1}$.

Section~\ref{sec:from q to ->coll(q,c)} describes the flip paths used to ``collapse'' a face by turning it into a degenerate face and those that move a degenerate face from one location to another within a quadrangulation. Section~\ref{sec:completing psi} finally explains how to build subpaths of the form $P((L_{i-1},R_{i-1}),(L_i,R_i))$ (which will turn out to be deterministic given $L_{i-1},L_i,R_{i-1},R_i$) by assembling flip sequences from Section~\ref{sec:from q to ->coll(q,c)} into a right phase, central phase and left phase, and establishes our desired estimates.

\subsection{Basic structure of canonical paths}\label{sec:canonical paths 1}


\begin{figure}\centering
\begin{tikzpicture}
	\begin{pgfonlayer}{nodelayer}
		\node [style=vertex] (0) at (-3, -0) {};
		\node [style=vertex] (1) at (-1, -0) {};
		\node [style=vertex] (2) at (-4, -0) {};
		\node [style=vertex] (3) at (-2.5, 0.75) {};
		\node [style=vertex] (4) at (-1.75, 1) {};
		\node at (-2.5,-0.8) {$L$};
	\end{pgfonlayer}
	\begin{pgfonlayer}{edgelayer}
		\draw [style=quad, blue] (2) to (0);
		\draw [style=quad, bend left=90, looseness=2.25, blue] (0) to (1);
		\draw [style=quad, blue] (0) to (3);
		\draw [style=quad, blue] (3) to (4);
		\draw [style=quad, blue] (4) to (1);
		\draw [style=quad, blue, ultra thick,->] (0) to (1);
	\end{pgfonlayer}
\end{tikzpicture}
\begin{tikzpicture}
	\begin{pgfonlayer}{nodelayer}
		\node [style=vertex] (0) at (0, -1) {};
		\node [style=vertex] (1) at (2.5, -1) {};
		\node [style=vertex] (2) at (-1.25, 1.25) {};
		\node [style=vertex] (3) at (2.5, 2.25) {};
		\node [style=vertex] (4) at (0, -1) {};
		\node [style=vertex] (5) at (0, 1) {};
		\node [style=vertex] (6) at (1.25, 1.25) {};
		\node [style=vertex] (7) at (0, -0) {};
		\node [style=vertex] (8) at (1.75, -0.25) {};
		\node [style=vertex] (9) at (0.75, 0.25) {};
		\node [style=vertex] (10) at (2, 1.5) {};
		\node at (1,-1.8) {$R$};
	\end{pgfonlayer}
	\begin{pgfonlayer}{edgelayer}
		\draw [style=quad,->,ultra thick] (0) to (1);
		\draw [style=quad] (0) to (2);
		\draw [style=quad] (3) to (2);
		\draw [style=quad] (1) to (3);
		\draw [style=quad, bend right=45, looseness=1.50] (6) to (3);
		\draw [style=quad] (6) to (5);
		\draw [style=quad] (5) to (2);
		\draw [style=quad] (5) to (7);
		\draw [style=quad] (7) to (0);
		\draw [style=quad, bend left=45, looseness=1.50] (7) to (8);
		\draw [style=quad] (9) to (8);
		\draw [style=quad] (8) to (6);
		\draw [style=quad] (10) to (6);
		\draw [style=quad] (8) to (1);
		\draw [style=quad] (6) to (3);
		\draw [style=quad] (7) to (8);
	\end{pgfonlayer}
\end{tikzpicture}\hspace{.5cm}
\begin{tikzpicture}
\clip[use as bounding box] (-1,-4) rectangle (5,1.8);
	\begin{pgfonlayer}{nodelayer}
		\node [style=vertex] (0) at (-1, -1) {};
		\node [style=vertex] (1) at (1, -1) {};
		\node [style=vertex] (2) at (-0.5, -2) {};
		\node [style=vertex] (3) at (-0.5, -0.5) {};
		\node [style=vertex] (4) at (0.5, -0.5) {};
		\node [style=vertex] (5) at (2.5, -0.5) {};
		\node [style=vertex] (6) at (2.25, -1.5) {};
		\node [style=vertex] (7) at (3, -1) {};
		\node [style=vertex] (8) at (2, 0.75) {};
		\node [style=vertex] (9) at (4, -1.25) {};
		\node [style=vertex] (10) at (3.75, 0.25) {};
		\node [style=vertex] (11) at (3.75, 1) {};
		\node [style=vertex] (12) at (5, -1) {};
		\node [style=vertex] (13) at (1, -1) {};
		\node [style=vertex] (14) at (3.25, -0.5) {};
		\node at (2,-3.8) {$L\cdot R$};
	\end{pgfonlayer}
	\begin{pgfonlayer}{edgelayer}
	\filldraw[red!10] (13.center) [bend right=60, looseness=1.3] to (12.center) [bend right=90, looseness=2.10] to (13.center);
	\filldraw[blue!10] (0.center) [bend right=90, looseness=3.20] to (1.center) [bend right=90, looseness=2.45] to (0.center);
		\draw [style=quad, blue] (2) to (0);
		\draw [style=quad, bend left=90, looseness=2.25,blue,<-, ultra thick] (0) to (1);
		\draw [style=quad, blue] (0) to (3);
		\draw [style=quad, blue] (3) to (4);
		\draw [style=quad, blue] (4) to (1);
		\draw [quad, bend right=45, looseness=1.00, blue] (0) to (1);
		\draw [style=quad] (5) to (6);
		\draw [style=quad] (12) to (11);
		\draw [style=quad] (6) to (9);
		\draw [style=quad] (14) to (5);
		\draw [style=quad, bend left=45, looseness=1.50] (6) to (9);
		\draw [style=quad] (11) to (8);
		\draw [style=quad, bend left=90, looseness=2.00, ultra thick, dashed] (13) to (12);
		\draw [style=quad, bend right=60, looseness=1.25, ultra thick, ->] (13) to (12);
		\draw [style=quad] (9) to (14);
		\draw [style=quad] (6) to (13);
		\draw [style=quad] (10) to (14);
		\draw [style=quad] (5) to (8);
		\draw [style=quad] (7) to (9);
		\draw [style=quad, bend right=45, looseness=1.50] (14) to (11);
		\draw [style=quad] (9) to (12);
		\draw [style=quad] (14) to (11);
		\draw [style=quad] (13) to (8);
		\draw [blue, ultra thick, bend right=90, looseness=3.00, dashed] (0) to (1);
	\end{pgfonlayer}
\end{tikzpicture}
\caption{\label{fig:eye quadrangulation}From two quadrangulations $L\in\sQ_3$ and $R\in\sQ_8$ to a quadrangulation $L\cdot R\in\sQ_{12}$ (whose root is the red one on the right, while the marked blue oriented edge is forgotten). Notice that it is possible to recover $L$ and $R$ from $L\cdot R$, by splitting the two cycles which form the boundary of the face containing the root corner, erasing the dashed edges and rooting appropriately.}
\end{figure}
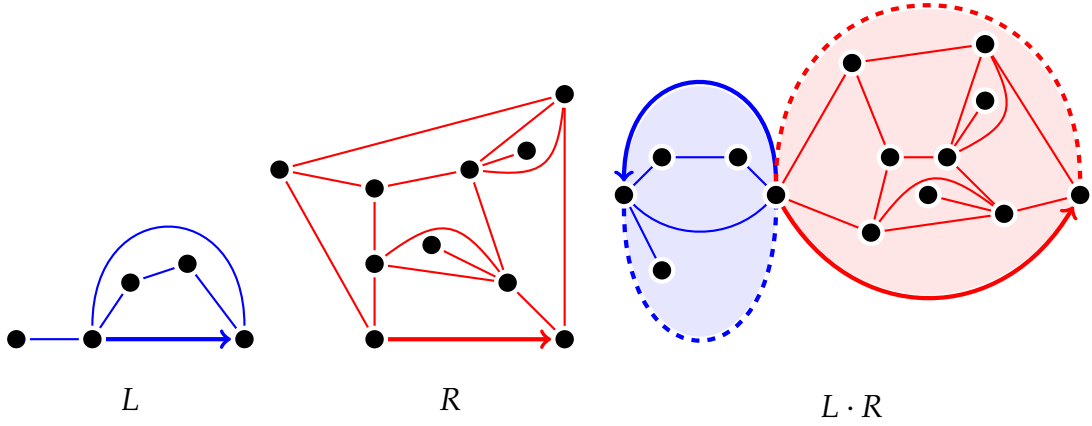

In order to describe the general structure of our canonical paths, it will be useful to introduce certain ``surgical operations'' that will enable us to assemble multiple quadrangulations into a single larger one. Given two quadrangulations $L\in\sQ_l$ and $R\in\sQ_r$, where $l,r\geq 1$, we shall write $L\cdot R$ for the quadrangulation in $\sQ_{l+r+1}$ obtained as follows (Figure~\ref{fig:eye quadrangulation}): first ``double'' the root edges of $L$ and $R$ by attaching a degree two face directly to their right; for convenience, draw this degree two face as the infinite face in the plane, so that  $L$ and $R$ are each ``enclosed'' within a cycle of length 2 containing the root edge; now draw both quadrangulation in the plane, identifying their origins, in such a way that both root edges are oriented clockwise (with respect to the infinite face); finally, forget the rooting of $L$ to obtain $L\cdot R$. It will be convenient to also consider the case where $l=0$ or $r=0$ (remember we have conventionally set $\sQ_0=\{\rightarrow\}$); we will set $\rightarrow\cdot q$, for any $q$ with $|q|\geq 1$, to be the quadrangulation obtained by adding a degenerate face directly to the right of the root edge of $q$ (equivalently, the operation described above is performed without actually doubling the root edge of $\rightarrow$). The quadrangulation $q\cdot\rightarrow$ is $\rightarrow\cdot q$, rerooted in the edge within the added degenerate face, so as not to change the origin.

We shall write $\sQ_l\cdot\sQ_r$ for the subset $\{L\cdot R \st (L,R)\in\sQ_l\times\sQ_r\}$ of $\sQ_{l+r+1}$. Notice that, given $q\in\sQ_l\cdot\sQ_r$ such that $q=L\cdot R$, one can quite simply reconstruct $L$ and $R$, since the rooting of $L$, which is the only information not trivially encoded, can still be recovered by following the contour of the face containing the root corner of $q$.

As previously described, the idea behind our canonical paths will be to ``destroy'' the starting quadrangulation $q$ on the right while ``growing'' a new quadrangulation on the left.

 Before dealing with the general case, we shall focus on the case where the ``final'' quadrangulation $q'$ is of the form $\tilde{q}\cdot\rightarrow$ for some $\tilde{q}\in\sQ_{n-1}$. Furthermore, we shall not yet build the full random canonical path from $q$ to $\tilde{q}\cdot\rightarrow$, but a random sequence of quadrangulations of the form $(L_i\cdot R_i)_{i=0}^{n-1}$, taking values in $\prod_{i=0}^{n-1}(\sQ_i\times\sQ_{n-i-1})$, that our random canonical path will ``go through''. Given this sequence, the path will actually be deterministic, as detailed within Sections~\ref{sec:from q to ->coll(q,c)} and~\ref{sec:completing psi}.

Given $q\in\sQ_n$ and $q'\cdot\rightarrow\in\sQ_n$, consider the probability distribution $\P_q^{q'}$ on the set $\prod_{i=0}^{n-1}\sQ_i\cdot\sQ_{n-i-1}$ defined as follows. Given $(L_i\cdot R_i)_{i=0}^{n-1}\in \prod_{i=0}^{n-1}\sQ_i\cdot\sQ_{n-i-1}$, set
$$\P_q^{q'}((L_i\cdot R_i)_{i=0}^{n-1})=g_n(q,R_0)1_{L_{n-1}=q'}\prod_{i=0}^{n-2} g_{n-i-1}(R_i,R_{i+1})g_{i+1}(L_{i+1},L_i).$$

It should be clear that $\P_q^{q'}$ is a probability distribution: a random sequence $(\lambda_i\cdot\rho_i)_{i=0}^{n-1}$ distributed according to $\P_q^{q'}$ is simply built in such a way that $\lambda_{n-1},\lambda_{n-2},\ldots,\lambda_0$ and $q,\rho_0,\ldots,\rho_{n-1}$ are independent sequences of random quadrangulations, started at $q'$ and $q$ respectively, built so as to collapse one random face according to the probability distribution given by $g_i(-,\cdot)$ at each step.

The key feature of the probability distribution $\P_q^{q'}$ which we will use to complete the necessary estimates on the congestion given by our random canonical paths is expressed in the following lemma:
\begin{lemma}\label{key estimate}
Given positive integers $n, a<n-1, b<n$ and quadrangulations $l\in\sQ_a,r\in\sQ_b$, we have
$$\sum_{q\in\sQ_n, q'\in\sQ_{n-1}}\P_q^{q'}\left(\{(L_i,R_i)_{i=0}^{n-1}\st L_a=l,R_{n-b-1}=r\}\right)\leq 12^{2n-b-a-1}.$$
\end{lemma}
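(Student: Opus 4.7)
The plan is to exploit the product structure of $\P_q^{q'}$ and reduce the desired bound to the numerical inequality $|\sQ_{k+1}|/|\sQ_k|\leq 12$. First, substituting the definition of $\P_q^{q'}$ into the displayed sum and interchanging sums, the indicator $1_{L_{n-1}=q'}$ allows the outer sum over $q'\in\sQ_{n-1}$ to collapse, so that $L_{n-1}$ becomes a free summation variable. Applying property~(iii) of $g_n$ to the sum over $q\in\sQ_n$ appearing in the factor $g_n(q,R_0)$ yields the constant $|\sQ_n|/|\sQ_{n-1}|$, independently of $R_0$. What remains is
\[
\frac{|\sQ_n|}{|\sQ_{n-1}|}\sum_{\substack{R_0,\ldots,R_{n-1}\\R_{n-b-1}=r}}\prod_{i=0}^{n-2}g_{n-i-1}(R_i,R_{i+1})\;\cdot\;\sum_{\substack{L_0,\ldots,L_{n-1}\\L_a=l}}\prod_{i=0}^{n-2}g_{i+1}(L_{i+1},L_i),
\]
and the two chain sums can be analysed in parallel, since both have the form $\sum\prod_k g_k(\text{larger},\text{smaller})$ with a single pinning constraint at an intermediate level.

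For each chain, the pin splits the summation into a ``below'' segment terminating at the singleton $\sQ_0$ and an ``above'' segment terminating at $\sQ_{n-1}$. In the below segment I would integrate out the smallest free variable first: it appears in a single factor $g_k(\cdot,\text{smallest})$, which property~(ii) collapses to $1$; iterating up to the pin reduces the whole segment to $1$. In the above segment I would instead integrate the largest free variable first: it appears in a single factor $g_k(\text{largest},\cdot)$ and property~(iii) produces the ratio $|\sQ_k|/|\sQ_{k-1}|$; iterating downward to the pin gives a telescoping product. The upshot is that the $L$-chain contributes $|\sQ_{n-1}|/|\sQ_a|$ and the $R$-chain contributes $|\sQ_{n-1}|/|\sQ_b|$, so the whole sum equals
\[
\frac{|\sQ_n|\,|\sQ_{n-1}|}{|\sQ_a|\,|\sQ_b|}.
\]

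To conclude I would invoke Tutte's explicit formula $|\sQ_k|=\tfrac{2\cdot 3^k}{(k+1)(k+2)}\binom{2k}{k}$, from which a short calculation gives
\[
\frac{|\sQ_{k+1}|}{|\sQ_k|}=\frac{12k+6}{k+3}\leq 12\qquad\text{for every }k\geq 0.
\]
Telescoping yields $|\sQ_n|/|\sQ_a|\leq 12^{n-a}$ and $|\sQ_{n-1}|/|\sQ_b|\leq 12^{n-1-b}$, and the product is exactly the claimed bound $12^{2n-a-b-1}$. The whole argument is essentially bookkeeping; the only subtle point is recognising that properties~(ii) and~(iii) are complementary---summing out a smaller argument gives $1$, summing out a larger one gives a growth factor---so that the pin on each chain neatly separates it into a ``contracting'' half handled by~(ii) and an ``expanding'' half handled by~(iii).
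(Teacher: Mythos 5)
Your proposal is correct and follows essentially the same route as the paper: both exploit the factorization of $\P_q^{q'}$ into an $L$-chain and an $R$-chain, use property~(ii) to collapse the ``below-the-pin'' segment of each chain to $1$ and property~(iii) to telescope the ``above-the-pin'' segment into a ratio of cardinalities, and finish with the bound $|\sQ_{k+1}|/|\sQ_k|\leq 12$. (Minor remark: the paper's proof contains a typographical slip, writing the relevant inequality as $|\sQ_{i+1}|\geq 12|\sQ_i|$ and citing the formula $|\sQ_n|=3^n\operatorname{Cat}(n)$; your version with the correct Tutte formula and the ratio $\tfrac{12k+6}{k+3}\leq 12$ is the right statement, and it is also the direction the paper actually needs and uses elsewhere.)
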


\begin{proof}
The expression in the statement can be rewritten as
$$\left(\sum_{q\in\sQ_n}\sum_{\substack{(R_i)_{i=0}^{n-b-2}\in\prod_{i=0}^{n-b-2}\sQ_{n-i-1}\\R_{n-b-1}=r\\(R_i)_{i=n-b}^{n-1}\in\prod_{i=n-b}^{n-1}\sQ_{n-i-1}}}g_n(q,R_0)\prod_{i=0}^{n-2} g_{n-i-1}(R_i,R_{i+1})\right)%
\left(\sum_{q'\in\sQ_{n-1}}\sum_{\substack{(L_i)_{i=0}^{a-1}\in\prod_{i=0}^{a-1}\sQ_i\\L_a=l\\(L_i)_{i=a+1}^{n-1}\in\prod_{i=a+1}^{n-1}\sQ_i}}1_{L_{n-1}=q'}\prod_{i=0}^{n-2} g_{i+1}(L_{i+1},L_{i})\right).$$
Let us give an upper bound for the second factor above: the computations involved in bounding the first factor will be entirely similar.

By appropriately exchanging sums and products, we can rewrite it as
$$\sum_{(L_i)_{i=a+1}^{n-1}\in\prod_{i=a+1}^{n-1}\sQ_i}g_{a+1}(L_{a+1},l)\prod_{i=a+1}^{n-2}g_{i+1}(L_{i+1},L_{i})\sum_{(L_i)_{i=0}^{a-1}\in\prod_{i=0}^{a-1}\sQ_i}g_a(l,L_{a-1})\prod_{i=0}^{a-2}g_{i+1}(L_{i+1},L_i);$$
the entire internal sum is equal to 1 by property (ii) of the mappings $g_1,\ldots,g_a$; the external sum can thus be evaluated by using property (iii) of the mappings $g_{a+1},\ldots, g_{n-1}$ (and by summing over $L_{n-1},L_{n-2},\ldots,L_{a+1}$ separately, in turn). We obtain that the above is
$$\prod_{i=a}^{n-2} \frac{|\sQ_{i+1}|}{|\sQ_i|}\leq 12^{n-a-1},$$
where we have used the simple fact that, for all $i\geq 0$, $|\sQ_{i+1}|=3^{i+1}\operatorname{Cat}(i+1)\geq 3\cdot 4\cdot 3^i\operatorname{Cat}(i)=12|\sQ_i|$.

As for the first factor above, a similar argument yields that it is equal to $\prod_{i=b}^{n-1}\frac{|\sQ_{i+1}|}{|\sQ_i|}$, and therefore bounded above by $12^{n-b}$, which concludes the proof of the lemma.
\end{proof}

Consider now the general case of a pair of quadrangulations $q_1,q_2\in\sQ_n$: we are almost ready to construct our probability measure $\P_{q_1\rightarrow q_2}$ on the set of all possible paths $\Gamma_{q_1\rightarrow q_2}$. This will require three fundamental ingredients: one is the family of probability spaces $(\Gamma_q^{q'},\P_q^{q'})$ (for $q\in\sQ_n,q'\in\sQ_{n-1}$) we just built and discussed; one is a mapping $F:\sQ_n^2\rightarrow\sQ_{n-1}$, which we will use to assign to the pair $q_1,q_2$ the probability space $\left(\Gamma_{q_1}^{F(q_1,q_2)}\times\Gamma_{q_2}^{F(q_1,q_2)},\P_{q_1}^{F(q_1,q_2)}\otimes \P_{q_2}^{F(q_1,q_2)}\right)$; the last one is a mapping $\Psi_{q_1,q_2}:\Gamma_{q_1}^{F(q_1,q_2)}\times\Gamma_{q_2}^{F(q_1,q_2)}\rightarrow \Gamma_{q_1\rightarrow q_2}$, which will enable us to simply define $\P_{q_1\rightarrow q_2}$ as the push-forward via $\Psi_{q_1,q_2}$ of the probability measure $\P_{q_1}^{F(q_1,q_2)}\otimes \P_{q_2}^{F(q_1,q_2)}$.

For the mapping $F$, we may choose any which satisfies the condition that, given $q\in\sQ_n$ and $q'\in\sQ_{n-1}$, we have $\left|\left\{\tilde{q}\in\sQ_n\st F(q,\tilde{q})=q'\right\}\right|\leq 12$, and similarly $\left|\left\{\tilde{q}\in\sQ_n\st F(\tilde{q},q)=q'\right\}\right|\leq 12$. The fact that such a mapping exists is an immediate consequence of the fact that $|\sQ_n|\leq 12|\sQ_{n-1}|$: we shall from here on use $F:\sQ_n^2\rightarrow\sQ_{n-1}$ under the assumption that we have chosen one such mapping.

The next section will be devoted to the construction of a mapping $\Psi_{q_1,q_2}:\Gamma_{q_1}^{F(q_1,q_2)}\times\Gamma_{q_2}^{F(q_1,q_2)}\rightarrow\Gamma_{q_1\rightarrow q_2}$, which will consist in essentially ``interpolating'' sequences $(L^1_i\cdot R^1_i)_{i=0}^{n-1}\in\Gamma_{q_1}^{F(q_1,q_2)}$ and $(L^2_i\cdot R^2_i)_{i=0}^{n-1}\in\Gamma_{q_2}^{F(q_1,q_2)}$ by filling in the ``gap'' between successive quadrangulations via sequences of edge flips and making sure to run the complete flip sequence constructed from $(L^1_i\cdot R^1_i)_{i=0}^{n-1}$ forward, then the one constructed from $(L^2_i\cdot R^2_i)_{i=0}^{n-1}\in\Gamma_{q_2}^{F(q_1,q_2)}$ backwards. This needs to be done with some care: in particular, our aim is to be able to give an upper bound for the quantity
$$\sum_{q_1,q_2\in\sQ_n}\P_{q_1\rightarrow q_2}(\{\gamma\in\Gamma_{q_1\rightarrow q_2}\mbox{ containing } (q,e,s)\})$$
independent of the flip $(q,e,s)$ by invoking Lemma~\ref{key estimate}. Indeed, we wish to build $\Psi_{q_1,q_2}$ in such a way that knowing a flip $(q,e,s)$ appears in a path $\Psi_{q_1,q_2}((L^1_i\cdot R^1_i)_{i=0}^{n-1},(L^2_i\cdot R^2_i)_{i=0}^{n-1})$ gives as much information as possible about the actual quadrangulations $L_i^j,R_i^j$.

\subsection{The flip path from $q$ to $\rightarrow \cdot \coll{q,c}$}\label{sec:from q to ->coll(q,c)}

We now begin the task of constructing our mappings $\Psi_{q_1,q_2}$, for $q_1,q_2\in\sQ_n$. In order to do this, given $(L_i\cdot R_i)_{i=0}^{n-1}\in\Gamma_q^{q'}$ we wish to construct flip sequences leading from the quadrangulation $L_i\cdot R_i$ to the quadrangulation $L_{i+1}\cdot R_{i+1}$, plus a flip path from $q$ to $L_0\cdot R_0$. Notice that with probability 1 (according to $\P_q^{q'}$) the quadrangulation $R_{i+1}$ differs from $R_i$ by collapsing a face; the same is true for $L_i$ and $L_{i+1}$ and for $R_0$ and $q$. We may therefore assume this is the case when constructing $\Psi_{q_1,q_2}$.

First of all, we shall construct the very first part of the flip path, which will transform a quadrangulation $q$ into $L_0\cdot R_0$, where $|L_0|=0$ (hence $L_0=\rightarrow$) and $R_0$ is of the form $\coll{q,c}$ for some corner $c$ of $q$. Once this construction is made, all others will be rather straightforward generalisations of it.

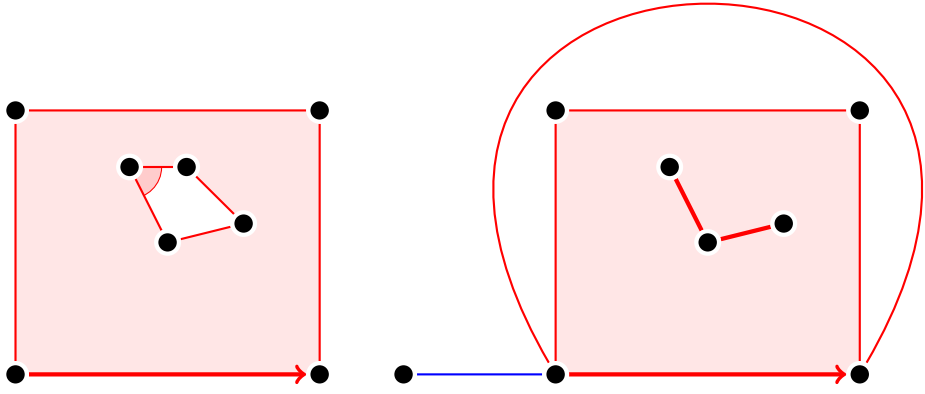
\begin{figure}\centering
\begin{tikzpicture}
\clip[use as bounding box] (-2,-2.5) rectangle (3,2);
	\begin{pgfonlayer}{nodelayer}
		\node [style=vertex] (0) at (-2, -2) {};
		\node [style=vertex] (1) at (2, -2) {};
		\node [style=vertex] (2) at (-2, 1.5) {};
		\node [style=vertex] (3) at (2, 1.5) {};
		\node [style=vertex] (4) at (-0.5, 0.75) {};
		\node [style=vertex] (5) at (0, -0.25) {};
		\node [style=vertex] (6) at (1, -0) {};
		\node [style=vertex] (7) at (0.25, 0.75) {};
	\end{pgfonlayer}
	\begin{pgfonlayer}{edgelayer}
	\filldraw[red!10] (-2,-2) rectangle (2,1.5);
		\begin{scope}
	\clip (4.center) to (5.center) to (6.center) to (7.center) to (4.center);
	\fill[white] (4.center) to (5.center) to (6.center) to (7.center) to (4.center);
	\draw[draw=red, fill=red!20] (4) circle (12pt);
	\end{scope}
		\draw [style=quad,ultra thick,->] (0) to (1);
		\draw [style=quad] (1) to (3);
		\draw [style=quad] (3) to (2);
		\draw [style=quad] (2) to (0);
		\draw [style=quad] (4) to (5);
		\draw [style=quad] (5) to (6);
		\draw [style=quad] (6) to (7);
		\draw [style=quad] (7) to (4);
	\end{pgfonlayer}
\end{tikzpicture}
\begin{tikzpicture}
\clip[use as bounding box] (-4,-2.5) rectangle (3,2);
	\begin{pgfonlayer}{nodelayer}
		\node [style=vertex] (0) at (-2, -2) {};
		\node [style=vertex] (1) at (2, -2) {};
		\node [style=vertex] (2) at (-2, 1.5) {};
		\node [style=vertex] (3) at (2, 1.5) {};
		\node [style=vertex] (4) at (-0.5, 0.75) {};
		\node [style=vertex] (5) at (0, -0.25) {};
		\node [style=vertex] (6) at (1, -0) {};
		\node [style=vertex] (7) at (-4, -2) {};
	\end{pgfonlayer}
	\begin{pgfonlayer}{edgelayer}
	\filldraw[red!10] (-2,-2) rectangle (2,1.5);
	\draw [quad, blue] (7) to (0);
		\draw [style=quad,ultra thick,->] (0) to (1);
		\draw [style=quad] (1) to (3);
		\draw [style=quad] (3) to (2);
		\draw [style=quad] (2) to (0);
		\draw [style=quad, ultra thick] (4) to (5);
		\draw [style=quad, ultra thick] (5) to (6);
		\draw [quad, out=120, in=60, looseness=4.5] (0) to (1);
	\end{pgfonlayer}
\end{tikzpicture}
\caption{\label{fig:q,->xcoll(q,c)}On the left, a quadrangulation $q\in\sQ_n$, drawn in the plane so that the infinite face lies directly to the right of the root edge, with a marked corner $c$ within a face $f$. To the right, the quadrangulation $\rightarrow\cdot\coll{q,c}$: the face $f$ is ``collapsed'' and a degenerate face is added directly to the right of the root edge. If $\rho$ is the root corner of the quadrangulation $q'$ drawn on the right, $\coll{q',\rho}$ is $\coll{q,c}$.}
\end{figure}

Hence our objective is this: given a quadrangulation $q\in\sQ_n$ and a corner $c$ of $q$, we shall build a unique canonical path that, through a sequence of edge flips, transforms $q$ into the quadrangulation $\rightarrow \cdot \coll{q,c}$ (see Figure~\ref{fig:q,->xcoll(q,c)} for a representation of a quadrangulation of the form $\rightarrow \cdot \coll{q,c}$).

We shall say that such a path has two phases: the first phase has the aim of replacing the face $f_c$ containing $c$ with a degenerate face in such a way that the appropriate vertices of $q$ are identified; the second phase consists in ``moving'' the degenerate face so that it ends up lying directly to the right of the root edge.  We shall first concern ourselves with the second phase, that is, build a canonical path $P(q,c)$ from $q$ to $\rightarrow\cdot \coll{q,c}$ in the case where $c$ is a corner within a degenerate face; note that the specific case where the internal edge of this face is the root edge of $q$ is a little different and will be dealt with separately.

\begin{figure}
\centering
\begin{tikzpicture}
	\begin{pgfonlayer}{nodelayer}
		\node [style=vertex, label=above:$v_i$,label=left:$4$] (0) at (0, 4) {};
		\node [style=vertex, label=right:$w_i$,label=left:$3$] (1) at (0, 2) {};
		\node at (0.25,3.4) {$\eta_i$};
		\node [style=vertex] (2) at (0, 3) {};
		\node [style=vertex,label=left:$2$] (3) at (0, 1) {};
		\node [style=vertex,label=left:$1$] (4) at (0, -0) {};
		\node [style=vertex,label=left:$0$, label=below:$\emptyset$] (5) at (0, -1) {};
		\node [style=vertex] (6) at (1, -1) {};
	\end{pgfonlayer}
	\begin{pgfonlayer}{edgelayer}
		\draw [thick] (5) to (1);
		\draw [very thick,->,red] (5) to (6);
		\draw [thick, bend right=60, looseness=1.00] (1) to (0);
		\draw [thick, bend left=60, looseness=1.00] (1) to (0);
		\draw [very thick, blue] (2) to (0);
	\end{pgfonlayer}
\end{tikzpicture}
\begin{tikzpicture}
	\begin{pgfonlayer}{nodelayer}
		\node [style=vertex, label=above:$w_{i+1}$,label=left:$4$] (0) at (0, 4) {};
		\node [style=vertex, label=right:$v_{i+1}$,label=left:$3$] (1) at (0, 2) {};
		\node at (0,2.5) {\contour{white}{$\eta_{i+1}$}};
		\node [style=vertex] (2) at (0, 3) {};
		\node [style=vertex,label=left:$2$] (3) at (0, 1) {};
		\node [style=vertex,label=left:$1$] (4) at (0, -0) {};
		\node [style=vertex,label=left:$0$, label=below:$\emptyset$] (5) at (0, -1) {};
		\node [style=vertex] (6) at (1, -1) {};
	\end{pgfonlayer}
	\begin{pgfonlayer}{edgelayer}
		\draw [thick] (5) to (1);
		\draw [very thick,->,red] (5) to (6);
		\draw [thick, bend right=60, looseness=1.00] (1) to (0);
		\draw [thick, bend left=60, looseness=1.00] (1) to (0);
		\draw [very thick, blue, dashed] (2) to (0);
		\draw [very thick, blue] (2) to (1);
	\end{pgfonlayer}
\end{tikzpicture}
\hspace{1cm}
\begin{tikzpicture}
	\begin{pgfonlayer}{nodelayer}
		\node [style=vertex, label=above:$v_{i}$,label=left:$4$] (0) at (0, 4) {};
		\node [style=vertex, label=right:$w_{i}$,label=left:\contour{white}{$3$}] (1) at (0, 2) {};
		\node at (-0.9,3) {\contour{white}{$\tilde{\eta}_{i}$}};
		\node [style=vertex] (2) at (0, 3) {};
		\node [style=vertex,label=left:$2$] (3) at (0, 1) {};
		\node [style=vertex,label=left:$1$] (4) at (0, -0) {};
		\node [style=vertex,label=left:$0$, label=below:$\emptyset$] (5) at (0, -1) {};
		\node [style=vertex] (6) at (1, -1) {};
		\node [style=vertex] (x) at (-1.5, 2.3) {};
	\end{pgfonlayer}
	\begin{pgfonlayer}{edgelayer}
		\draw [thick] (5) to (1);
		\draw [thick] (x) to (1);
		\draw [very thick,->,red] (5) to (6);
		\draw [thick, bend right=60, looseness=1.00] (1) to (0);
		\draw [blue, very thick, bend left=60, looseness=1.00] (1) to (0);
		\draw [thick] (2) to (1);
	\end{pgfonlayer}
\end{tikzpicture}
\begin{tikzpicture}
	\begin{pgfonlayer}{nodelayer}
		\node [style=vertex, label=above:$v_{i+1}$,label=left:$4$] (0) at (0, 4) {};
		\node [style=vertex, label=right:$w_{i+1}$,label=left:\contour{white}{$3$}] (1) at (0, 2) {};
		\node [style=vertex] (2) at (0, 3) {};
		\node [style=vertex,label=left:$2$] (3) at (0, 1) {};
		\node [style=vertex,label=left:$1$] (4) at (0, -0) {};
		\node [style=vertex,label=left:$0$, label=below:$\emptyset$] (5) at (0, -1) {};
		\node [style=vertex] (6) at (1, -1) {};
		\node [style=vertex] (x) at (-1.5, 2.3) {};
	\end{pgfonlayer}
	\begin{pgfonlayer}{edgelayer}
		\draw [thick] (5) to (1);
		\draw [thick] (x) to (1);
		\draw [very thick,->,red] (5) to (6);
		\draw [thick, bend right=60, looseness=1.00] (1) to (0);
		\draw [blue, very thick, bend left=60, looseness=1.00, dashed] (1) to (0);
		\draw [thick] (2) to (1);
		\draw [very thick, blue, out=90, in=50, looseness=3] (x) to (1);
	\end{pgfonlayer}
\end{tikzpicture}
\caption{\label{fig:move leafy face}}
\end{figure}

\begin{lemma}\label{lemma:P_1(q,c)}Let $c$ be a corner within a degenerate face $f$ of a quadrangulation $q\in\sQ_n$ and suppose the root edge of $q$ is \emph{not} the internal edge of $f$. Define the path $P(q,c)=(q_i,e_i,+)_{i=1}^N$ recursively as follows (it may be useful to refer to Figure~\ref{fig:move leafy face}):
\begin{itemize}
\item set $q_1=q$.
\item Let $f_1=f$ and $\eta_1$ be the internal edge of $f$; let $f_i$, for $i\geq 2$, be the face of $q_i$ that contains the (possibly flipped) image of edge $\eta_{i-1}$ in $q_i$, and $\eta_i$ the internal edge of $f_i$ (which will automatically be a degenerate face). Let $v_i$ be the vertex on the boundary of $f_i$ that is an endpoint of $\eta_i$, let $w_i$ be the other vertex on the external boundary of $f_i$ and let $\tilde{\eta}_i$ be the edge immediately after $\eta_i$ in counterclockwise order around vertex $v_i$.
\item If $\dgr(v_i,\emptyset)>\dgr(w_i,\emptyset)$ (where $\dgr$ is the graph distance on the vertex set of $q_i$), set $e_i=\eta_i$. If, on the other hand, $\dgr(v_i,\emptyset)<\dgr(w_i,\emptyset)$, set $e_i=\tilde{\eta}_i$.
\item Set $q_{i+1}=q_i^{e_i,+}$.
\item Set $N$ to be the first non-negative integer for which $q_N^{e_N,s_N}$ is the quadrangulation $\rightarrow\cdot\coll{q,c}$.
\end{itemize}

The path above is well defined, in the sense that $f_i$ is always degenerate (so that the construction can be performed), $e_i$ is never the root edge of $f$ and $N$ is a positive integer.

Furthermore, we have $|P(q,c)|=N\leq 6n$ and, for $i=1,\ldots,N$, we have $\coll{q_i,c_i}=\coll{q,c}$, where $c_i$ is the corner corresponding to the edge $e_i$, oriented towards $v_i$.

\end{lemma}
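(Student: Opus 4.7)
My plan is to argue by induction on $i$, analysing each of the two flip types locally, tracking the collapse invariant, and then bounding the length via a simple monotone potential on the position of the degenerate face.

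\textbf{Step 1: local analysis of the two flip types.} First I would understand what each flip does inside $q_i$. In the \emph{swap case} ($\dgr(v_i,\emptyset)>\dgr(w_i,\emptyset)$), the edge $e_i=\eta_i$ is the internal edge of a degenerate face, so the special rule in Section~\ref{sec:from rotations to flips} applies: the flip erases $\eta_i$ and draws an edge from $w_i$ to the degree-$1$ endpoint $u_i$. The digon on $\{v_i,w_i\}$ is untouched, so $f_{i+1}$ is still degenerate, with the roles of $v_i$ and $w_i$ swapped (now $v_{i+1}=w_i$, $w_{i+1}=v_i$). In the \emph{slide case} ($\dgr(v_i,\emptyset)<\dgr(w_i,\emptyset)$), $\tilde{\eta}_i$ is the edge immediately after $\eta_i$ counterclockwise at $v_i$; in particular it is external to $f_i$ and, together with the two edges of the digon and a further boundary edge of the face $f'$ on its other side, determines a six-corner cycle. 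Case analysis on the position of the three possible flipped diagonals shows that the clockwise flip of $\tilde{\eta}_i$ ``slides'' $f_i$ across $\tilde{\eta}_i$ into the region previously occupied by $f'$, and yields a new degenerate face $f_{i+1}$ whose external digon lies on two of the edges formerly on $\partial f'$.

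\textbf{Step 2: collapse invariance.} I would then prove by induction that $\coll{q_i,c_i}=\coll{q,c}$ for every $i$. In the swap case this is immediate: collapsing a degenerate face identifies $u$ with the external vertex opposite to the attachment of the internal edge, so collapsing $f_i$ identifies $u_i$ with $w_i$, while collapsing $f_{i+1}$ identifies $u_i$ with $v_i$; both operations yield the same resulting map (they only differ in a temporary relabelling of the ``merged'' vertex). In the slide case, one checks directly that the slide of $f_i$ across $\tilde{\eta}_i$ followed by the collapse of $f_{i+1}$ produces exactly the same identifications as the collapse of $f_i$ alone, because the set of edges removed or merged is invariant (the proof is a finite diagrammatic check of the four relevant local configurations). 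In particular, this verifies that the definition of $f_{i+1}$ as the face containing the image of $\eta_i$ is unambiguous.

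\textbf{Step 3: $e_i$ is never the root, and termination.} By hypothesis the root edge of $q$ is not internal to $f=f_1$. Invoking the invariance above, the image of the root edge $\rho$ never belongs to (the interior or boundary of) $f_i$ until we are within one flip of producing $\rightarrow\cdot\coll{q,c}$; I would formalise this by a direct check that in both flip cases the edge $e_i$ corresponds to an edge of $\coll{q,c}$ distinct from (the image of) $\rho$, unless we are at the very last step. For termination I would define the potential
\[
\Phi(q_i)=\dgr(v_i,\emptyset)+\dgr(w_i,\emptyset)+\mathbf{1}\{\dgr(v_i,\emptyset)>\dgr(w_i,\emptyset)\},
\]
and observe, using Steps~1--2, that a swap strictly decreases the indicator term without changing the sum, and that a slide strictly decreases the sum by at least one (because $\tilde{\eta}_i$ is incident to $v_i$, the closer vertex, and moves the digon onto an edge of $\coll{q,c}$ whose closer endpoint is strictly nearer to $\emptyset$). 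Since $\Phi$ is bounded above by $2\,\mathrm{diam}(\coll{q,c})+1\leq 2n+1$ and every two flips decrease $\Phi$ by at least $1$, we get $N\leq 2(2n+1)+O(1)\leq 6n$ once we have also accounted for the final reattachment to the root, which consumes a bounded number of flips.

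\textbf{Main obstacle.} The real work sits in Step 1 together with the slide case of Step 2: one must verify, by a careful configuration analysis, that flipping $\tilde{\eta}_i$ genuinely produces a degenerate face and that the collapsed map is preserved, even in degenerate subcases where $f'$ is itself degenerate, where two of the six corners on $\partial(f_i\cup f')$ coincide, or where $v_i=w_i$. These checks, together with verifying that the rerooting rule of the edge flip definition is consistent with $\coll{q_{i+1},c_{i+1}}=\coll{q,c}$, are the technical heart of the lemma; once they are in place, the potential argument and the bound $N\leq 6n$ are routine.
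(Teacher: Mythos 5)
Your Steps 1 and 2 are essentially the same as the paper's (local analysis of the two flip types, the swap/slide dichotomy, and the observation that both collapse-identifications give the same map in the swap case; the slide case is hand-waved exactly as in the paper). The genuine gap is in Step 3: your potential $\Phi(q_i)=\dgr(v_i,\emptyset)+\dgr(w_i,\emptyset)+\mathbf{1}\{\dgr(v_i,\emptyset)>\dgr(w_i,\emptyset)\}$ does \emph{not} strictly decrease over a slide.

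To see why, track the slide carefully. Flipping $\tilde\eta_i$ clockwise (when $\dgr(v_i,\emptyset)<\dgr(w_i,\emptyset)$) keeps $\eta_i$ and its degree-one endpoint $u_i$ untouched, so $\eta_{i+1}=\eta_i$ and hence $v_{i+1}=v_i$; the new external digon is $(v_i,z)$, where $z$ is the neighbour of $v_i$ across $\tilde\eta_i$ (i.e.\ the vertex of the face $f'$ on the other side of $\tilde\eta_i$ that is adjacent to $v_i$ and distinct from $w_i$). The digon therefore pivots around $v_i$, not around $w_i$. Since $z\sim v_i$ in a bipartite graph, $\dgr(z,\emptyset)=\dgr(v_i,\emptyset)\pm 1$; the case $\dgr(z,\emptyset)=\dgr(v_i,\emptyset)+1$ is perfectly possible, and then $\dgr(v_{i+1},\emptyset)+\dgr(w_{i+1},\emptyset)=\dgr(v_i,\emptyset)+\dgr(w_i,\emptyset)$ and the indicator stays $0$: $\Phi$ is unchanged. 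So ``a slide strictly decreases the sum by at least one'' is false, and your bound $N\le 6n$ does not follow. (Termination is still true, but for a different reason: the slides cycle through the edges around $v_i$ and must eventually reach one whose other endpoint is at distance $\dgr(v_i,\emptyset)-1$, since $v_i\ne\emptyset$; your potential simply does not see this.)

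The paper's counting is different and you should replace your Step 3 by something in its spirit: the number of $\eta$-flips is at most $\dgr(v_1,\emptyset)\le 2n$ because each one strictly decreases $\dgr(v_i,\emptyset)$; among the $\tilde\eta$-flips, any one that is \emph{not} immediately preceded by an $\eta$-flip hits an edge that has never been flipped before along the path (each slide moves to a fresh edge of the star of $v_i$), so there are at most $2n$ of them; and there are at most $2n$ $\tilde\eta$-flips immediately following an $\eta$-flip, one per $\eta$-flip. This gives $N\le 6n$ without any ``$O(1)$ for the final reattachment'' slack, which in your write-up is unaccounted for. Your treatment of the root-avoidance is also vaguer than it needs to be: the clean argument is that $\eta_i$ is never $\rho$ because $\eta_1\ne\rho$ and images of edges are preserved under the flips performed, while $\tilde\eta_i=\rho$ forces $v_i=\emptyset$ and $f_i$ already sitting directly to the right of $\rho$, i.e.\ $q_i=\rightarrow\cdot\coll{q,c}$ and the path has already stopped.
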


\begin{proof}
The fact that $f_i$ is degenerate is easily shown by induction. Indeed, flipping $\eta_i$ does not change the fact that it is an internal edge in a degenerate face. On the other hand, suppose $\dgr(v_i,\emptyset)<\dgr(w_i,\emptyset)$ and let $u_i$ be the endpoint of $\eta_i$ that is different from $v_i$. Then flipping $\tilde{\eta}_i$ \emph{clockwise} does not increase the degree of $u_i$, so that $\eta_i$ remains within a degenerate face in $q_{i+1}$.

Now, since $\eta_1$ is not the root edge of $q$, the edge $\eta_i$ (which is the image of $\eta_1$ after multiple flips in the path) cannot at any point be the root edge. On the other hand, if the root edge were $\tilde{\eta}_i$ and we had $\dgr(v_i,\emptyset)<\dgr(w_i,\emptyset)$, hence $v_i=\emptyset$, we would actually have $q_i=\rightarrow\cdot \coll{q,c}$.

The fact that $N$ is finite can be seen as a consequence of the fact that $\dgr(v_i,\emptyset)$ is weakly decreasing (since it is not increased by the flip of $\tilde{\eta}_i$ and is decreased when flipping $\eta_i$). After we have $v_i=\emptyset$, flipping $\tilde{\eta_i}$ repeatedly will eventually make $f_i$ the face immediately to the right of the root edge, yielding exactly the quadrangulation $\rightarrow\cdot\coll{q,c}$.

Let us now check the bound on $N$. Consider a step $(q_i,e_i,+)$ in the path, where $e_i\neq \eta_i$ and $e_{i-1}\neq \eta_{i-1}$; the edge $e_i$, which is then $\tilde{\eta}_i$, has never been flipped before (i.e.~it is not the image in $q_i$ of any $e_j$ for $j<i$). On the other hand, $\{i\leq N \st e_i=\eta_i\}\leq \dgr^q(v_0,\emptyset)\leq 2n$, hence the bound.

Finally, $c_i$ is a corner of the degenerate face $f_i$ (since $f_i$ contains $\eta_i$ and lies directly to the right to the right of $\tilde{\eta}_i$), and we can show that $\coll{q_i,c_i}=\coll{q_{i+1},c_{i+1}}$. This is obvious if $e_i=\eta_i$; if $e_i=\tilde{\eta}_i$, the quadrangulation $q_{i+1}$ differs by $q_i$ only by the fact that the degenerate face $f_i$ is ``rotated'' onto the edge after $\tilde{\eta}_i$ in counterclockwise order around $w_i$, then labelled $f_{i+1}$: collapsing it after the procedure will still yield $\coll{q_i,c_i}$.
\end{proof}

We shall then perform an ad hoc construction in the case  where the root edge is the internal edge within the degenerate face of $q$ containing $c$:

\begin{figure}\centering
\begin{tikzpicture}
\clip[use as bounding box] (-1.5,-0.8) rectangle (2.3,2);
	\begin{pgfonlayer}{nodelayer}
		\node [style=vertex, label=left:$v$] (0) at (-1, 1) {};
		\node [style=vertex, label=right:$u$] (1) at (0, 1) {};
		\node [style=vertex, label=above right:$w$] (2) at (1, 1) {};
		\node [style=vertex] (3) at (2, 1) {};
		\node at (0, 1.8) {\contour{white}{$e_3$}};
		\node at (1.5, 0.8) {\contour{white}{$e_2$}};
		\node at (0, 0.2) {\contour{white}{$e_1$}};
	\end{pgfonlayer}
	\begin{pgfonlayer}{edgelayer}
		\draw [very thick, red, <-] (0) to (1);
		\draw [thick, bend left=60, looseness=1.00] (0) to (2);
		\draw [thick, bend right=60, looseness=1.00] (0) to (2);
		\draw [thick] (2) to (3);
	\end{pgfonlayer}
\end{tikzpicture}	
\begin{tikzpicture}
\clip[use as bounding box] (-1.5,-0.8) rectangle (2.3,2);
	\begin{pgfonlayer}{nodelayer}
		\node [style=vertex, label=left:$v$] (0) at (-1, 1) {};
		\node [style=vertex, label=right:$u$] (1) at (0, 1) {};
		\node [style=vertex, label=above right:$w$] (2) at (1, 1) {};
		\node [style=vertex] (3) at (2, 1) {};
		\node at (0, 1.8) {\contour{white}{$e_3$}};
		\node at (1.5, 0.8) {\contour{white}{$e_2$}};
		\node at (0, 0.2) {\contour{white}{$e_1$}};
	\end{pgfonlayer}
	\begin{pgfonlayer}{edgelayer}
		\draw [very thick, red, <-] (0) to (1);
		\draw [thick, bend left=60, looseness=1.00] (0) to (2);
		\draw [very thick, bend right=60, looseness=1.00, blue, dashed] (0) to (2);
		\draw [very thick, bend right=60, looseness=1.00, blue] (1) to (3);
		\draw [thick] (2) to (3);
	\end{pgfonlayer}
\end{tikzpicture}
\begin{tikzpicture}
\clip[use as bounding box] (-1.5,-0.8) rectangle (2.3,2);
	\begin{pgfonlayer}{nodelayer}
		\node [style=vertex, label=left:$v$] (0) at (-1, 1) {};
		\node [style=vertex, label=right:$u$] (1) at (0, 1) {};
		\node [style=vertex, label=above right:$w$] (2) at (1, 1) {};
		\node [style=vertex] (3) at (2, 1) {};
		\node at (0, 1.8) {\contour{white}{$e_3$}};
		\node at (1.5, 0.8) {\contour{white}{$e_2$}};
	\end{pgfonlayer}
	\begin{pgfonlayer}{edgelayer}
		\draw [very thick, red, <-] (0) to (1);
		\draw [thick, bend left=60, looseness=1.00] (0) to (2);
		\draw [thick, bend right=60, looseness=1.00] (1) to (3);
		\draw [very thick, blue, dashed] (2) to (3);
		\draw[very thick, blue, out=-20, in=60, looseness=8] (1) to (0);
	\end{pgfonlayer}
\end{tikzpicture}	
\begin{tikzpicture}
\clip[use as bounding box] (-1.5,-0.8) rectangle (2.3,2);
	\begin{pgfonlayer}{nodelayer}
		\node [style=vertex, label=left:$v$] (0) at (-1, 1) {};
		\node [style=vertex, label=right:$u$] (1) at (0, 1) {};
		\node [style=vertex, label=above right:$w$] (2) at (1, 1) {};
		\node [style=vertex] (3) at (2, 1) {};
		\node at (0, 1.8) {\contour{white}{$e_3$}};
	\end{pgfonlayer}
	\begin{pgfonlayer}{edgelayer}
		\draw [very thick, red, <-] (0) to (1);
		\draw [bend left=60, looseness=1.00, blue, very thick, dashed] (0) to (2);
		\draw [bend left=60, looseness=1.00, blue, very thick] (1) to (2);
		\draw [thick, bend right=60, looseness=1.00] (1) to (3);
		\draw[thick, out=-20, in=60, looseness=8] (1) to (0);
	\end{pgfonlayer}
\end{tikzpicture}\\
\begin{tikzpicture}
\clip[use as bounding box] (-1.5,-0.8) rectangle (2.8,2);
	\begin{pgfonlayer}{nodelayer}
		\node [style=vertex, label=left:$v$] (0) at (-1, 1) {};
		\node [style=vertex, label=right:$u$] (1) at (0, 1) {};
		\node [style=vertex, label=above right:$w$] (2) at (1, 1) {};
		\node [style=vertex] (3) at (2.5, 1) {};
		\node at (0, 1.8) {\contour{white}{$e_1$}};
		\node at (2, 0.8) {\contour{white}{$e_2$}};
		\node at (0, 0.2) {\contour{white}{$e_3$}};
	\end{pgfonlayer}
	\begin{pgfonlayer}{edgelayer}
		\draw [very thick, red, ->] (0) to (1);
		\draw [thick, bend left=60, looseness=1.00] (0) to (2);
		\draw [thick, bend right=60, looseness=1.00] (0) to (2);
		\draw [thick] (2) to (3);
	\end{pgfonlayer}
\end{tikzpicture}	
\begin{tikzpicture}
\clip[use as bounding box] (-1.5,-0.8) rectangle (2.8,2);
	\begin{pgfonlayer}{nodelayer}
		\node [style=vertex, label=left:$v$] (0) at (-1, 1) {};
		\node [style=vertex, label=right:\contour{white}{$u$}] (1) at (0, 1) {};
		\node [style=vertex, label=above right:$w$] (2) at (1, 1) {};
		\node [style=vertex] (3) at (2.5, 1) {};
		\node at (0, 1.8) {\contour{white}{$e_1$}};
		\node at (2, 0.8) {\contour{white}{$e_2$}};
		\node at (0, 0.2) {\contour{white}{$e_3$}};
	\end{pgfonlayer}
	\begin{pgfonlayer}{edgelayer}
		\draw [very thick, red, ->] (0) to (1);
		\draw [very thick, blue, bend left=60, looseness=1.00, dashed] (0) to (2);
		\draw [very thick, blue, bend left=60] (1) to (3);
		\draw [thick, bend right=60, looseness=1.00] (0) to (2);
		\draw [thick] (2) to (3);
	\end{pgfonlayer}
\end{tikzpicture}	
\begin{tikzpicture}
\clip[use as bounding box] (-1.5,-0.8) rectangle (2.8,2);
	\begin{pgfonlayer}{nodelayer}
		\node [style=vertex, label=left:$v$] (0) at (-1, 1) {};
		\node [style=vertex, label=right:\contour{white}{$u$}] (1) at (0, 1) {};
		\node [style=vertex, label=above right:\contour{white}{$w$}] (2) at (1, 1) {};
		\node [style=vertex] (3) at (2.5, 1) {};
		\node at (2, 0.8) {\contour{white}{$e_2$}};
		\node at (0, 0.2) {\contour{white}{$e_3$}};
	\end{pgfonlayer}
	\begin{pgfonlayer}{edgelayer}
		\draw [very thick, red, ->] (0) to (1);
		\draw [thick, bend left=60] (1) to (3);
		\draw [thick, bend right=60, looseness=1.00] (0) to (2);
		\draw [very thick, blue, dashed] (2) to (3);
		\draw[very thick, blue, out=20, in=-60, looseness=8] (1) to (0);
	\end{pgfonlayer}
	\end{tikzpicture}
\caption{\label{fig:internal root}}
\end{figure}
\begin{lemma}\label{lemma:annoying root}
Let $q\in\sQ_n$ be a quadrangulation whose root edge $\rho$ is the internal edge within a degenerate face $f$ and let $c$ be a corner within $f$. Let $u$ be the degree one endpoint of $\rho$, let $v$ be its other endpoint and let $w$ be the third vertex adjacent to $f$. 

If $u$ is the origin of $q$, let $e_1,\ldots, e_{\deg(w)}$ be the edges incident to $w$, in counterclockwise order, indexed in such a way that $e_1$ and $e_{\deg(v)}$ are the boundary edges of $q$. Set $P(q,c)=(q_i,e_i,+)_{i=1}^N$, where $q_1=q$ and $q_{i+1}=q_i^{e_i,+}$ for $i=1,\ldots,\deg(w)-1=N$ (Figure~\ref{fig:internal root}, above).

If $u$ is \emph{not} the origin of $q$ (hence $v$ is), let  $e_1,\ldots, e_{\deg(w)}$ be the edges incident to $w$, in \emph{clockwise} order, indexed in such a way that $e_1$ and $e_{\deg(w )}$ are the boundary edges of $q$. Set $P(q,c)=(q_i,e_i,-)_{i=1}^N$, where $q_1=q$ and $q_{i+1}=q_i^{e_i,+}$ for $i=1,\ldots,\deg(w)-2=N$ (Figure~\ref{fig:internal root}, below).

We then have $q_N^{e_N,s_N}=\rightarrow\cdot \coll{q,c}$, where $s_N=+$ in the first case and $s_N=-$ in the second. Notice that in any case we have $N< 2n$.

In the first case, let $c_i$ be the corner corresponding to edge $e_i$ in $q_i$, oriented away from $w$; in the second, let $c_i$ be the corner corresponding to $e_i$ in $q_i$, oriented towards $w$. In both cases, we have $\coll{q_i,c_i}=\coll{q,c}$.
\end{lemma}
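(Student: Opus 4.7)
I would argue by induction on $i$, showing that the configuration around the vertex $w$ evolves in a controlled way under the successive flips, with a ``degenerate face'' steadily rotating around $w$.

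For case 1, the key local observation is that the clockwise flip of $e_i$ rotates the edge by one corner within the hexagonal face created by erasing $e_i$; since $e_1, \ldots, e_{\deg(w)}$ are listed in counterclockwise order around $w$, this has the effect of moving the adjacent degenerate face one step clockwise around $w$. Before the flip, a degenerate face sits in the angular sector between $e_{i-1}$ (or $\rho$, when $i=1$) and $e_i$; after the flip, it sits between $e_i$ and $e_{i+1}$. I would check this directly for $i=1$ using the specific structure of the original degenerate face $f$ containing $\rho$, then verify that the same local picture recurs inductively. During this motion, the remaining edges $e_{i+1}, \ldots, e_{\deg(w)}$ persist as edges incident to $w$ in the same cyclic order, so the next flip is well-defined; moreover $e_i$ is never the root edge, since $\rho$ has endpoints $u,v$ and is not incident to $w$.

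Given these invariants, the assertion $\coll{q_i,c_i}=\coll{q,c}$ would follow immediately: at every step the ``rotated'' degenerate face produces, via the identifications involved in the collapse operation of Section~\ref{sec:growth algorithm}, the same merging of edges and vertices as collapsing $f$ did in $q$. For the terminal claim, after $\deg(w)-1$ clockwise flips the degenerate face has swept all the way around $w$ and ends up positioned immediately to the right of $\rho$, with $u$ still a degree-one endpoint of $\rho$ and serving as the leaf of the $\rightarrow$ piece; this is precisely the structure of $\rightarrow\cdot\coll{q,c}$. The case 2 argument is symmetric, with counterclockwise flips carrying the degenerate face in the opposite direction. One fewer flip suffices there because the root edge is now oriented out of $v$ rather than into $v$, so the rotation needs to traverse one less angular sector to align the degenerate face with the target configuration.

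The bound $N<2n$ follows from $N+1\leq\deg(w)$ together with the fact that the distinct half-edges (or edges counted with loop multiplicity) at any fixed vertex of $q$ cannot exceed the total half-edge count $4n$, sharpened using the presence of the non-$w$-incident edges of the face $f$ to give $\deg(w)\leq 2n$ in the cases of interest. The step I expect to require the most care is verifying that the resulting quadrangulation after the last flip is \emph{literally} $\rightarrow\cdot\coll{q,c}$ (and not just some other representative of the same collapse class): this requires precise tracking of the rerouted edges and a check that the terminal flip yields the correct rooting as well as the right kind of degenerate face to match the $\rightarrow\cdot$ construction from Section~\ref{sec:canonical paths 1}.
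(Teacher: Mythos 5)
The key step in your proposal --- that ``a degenerate face sits in the angular sector between $e_{i-1}$ and $e_i$'' and rotates one step around $w$ with each flip --- is not an invariant that actually holds. After the very first flip of $e_1$, the edge $e_1$ is removed and replaced by a new edge from $u$ to the far endpoint of $e_2$; the face lying to the right of the root in $q_2$ then has four \emph{distinct} vertices $u$, $v$, $w$, and that far endpoint, so it is a genuine (non-degenerate) quadrangle. The degenerate face you want to track is destroyed immediately and is only recreated at the very end of the procedure: the paper's own proof notes explicitly that it is ``the flip of $e_{\deg(w)-1}$ \emph{[that]} creates an edge between $u$ and $v$ enclosing $w$ within a degenerate face,'' i.e.\ no such face exists during the intermediate steps. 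Because your inductive invariant fails already at $i=2$, the appeal to it cannot support the claims $\coll{q_i,c_i}=\coll{q,c}$ or the identification of the terminal quadrangulation.

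The argument the paper actually uses is different in character and not recoverable from your sketch: one shows directly that the cumulative effect of the flips is to detach $e_2,\ldots,e_{\deg(w)-1}$ from $w$ and reroute them to $u$ (with $e_1$ and, in the first case, $e_{\deg(w)}$ being handled separately), so that at the end $w$ becomes a degree-one vertex hanging off $u$ inside a fresh degenerate face, which is literally the configuration $\rightarrow\cdot\coll{q,c}$. The compatibility of the collapse is then justified not via any degenerate-face invariant but by observing that, at every step, collapsing the (generally non-degenerate) face directly to the right of the root from the appropriate corner $c_i$ identifies $u$ with $w$ and reproduces $\coll{q,c}$ --- this requires knowing how the collapse operation acts on a four-vertex face as a function of the chosen corner, a point your proposal does not engage with. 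You would need to replace the rotating-degenerate-face picture with this rerouting description (or something equivalent) before the rest of your plan can go through.
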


\begin{proof}
Notice that the root edge $\rho$ does not have $w$ as an endpoint, hence all flips we perform are allowed, and that $N\leq\deg(w)<2n$.

Also remark that the quadrangulation $\rightarrow\cdot\coll{q,v}$ can be obtained from $q$ by ``detaching'' the edges $e_2,\ldots,e_{\deg(w)-1}$ from $w$ and rerouting them to $u$, replacing $e_1$ with an edge joining $u$ to $v$ in such a way as to create a face containing $w$ (which has now degree 1) directly on the right of the root edge, and finally replacing $e_{\deg{w}}$ with an edge between $w$ and $u$ in the case where $u$ is the origin of $q$.

But indeed, this is exactly the effect achieved by the sequence of flips given: when flipping $e_i$ we are erasing it in favour of an edge that is a version of $e_{i+1}$ rerouted towards $u$ rather than $w$. The flip of $e_{\deg(w)-1}$ creates an edge between $u$ and $v$ enclosing $w$ within a degenerate face, and flipping $e_{\deg(w)}$ in the case where $u$ is the origin ensures that the degree 1 vertex $w$ is a neighbour of the origin (see Figure~\ref{fig:internal root}).

Indeed, one can identify $u$ and $w$ in $q_i$ by collapsing the face lying directly to the right of the root and obtain the quadrangulation $\coll{q,c}$; the corner $c_i$ is defined in such a way that this is exactly the effect of taking $\coll{q,c_i}$.
\end{proof}

We will now construct a path of flips from $q$ to $\rightarrow\cdot\coll{q,c}$ in the case where $c$ is a corner within a non-degenerate face.

\begin{lemma}\label{lemma:complete path}
Let $c$ be a corner within a non-degenerate face $f$ of a quadrangulation $q\in\sQ_n$; if $f$ has four distinct vertices then let $c=c_1,c_2,c_3,c_4$ be a clockwise contour of $f$, and let $v_1,v_2,v_3,v_4$ be the corresponding vertices. If $f$ has three distinct vertices, then let $c_1,c_2,c_3,c_4$ be a clockwise contour of $f$ such that $c_1$ and $c_4$ are adjacent to the same vertex $v_1$, and let $v_2, v_3$ be the vertices of $c_2,c_3$.
If the root edge of $q$ has $v_2$ as an endpoint, let $w=v_4$; otherwise, let $w=v_2$.

Let $e_1,e_2,\ldots, e_{\deg(w)}$ be the edges adjacent to $w$, in clockwise order, indexed in such a way that $e_1$ and $e_{\deg(w)}$ are on the boundary of $f$. Set $P_1(q,c)=(q_i,e_i,-)_{i=1}^{\deg(w)-1}$, where $q_1=q$ and $q_{i+1}=q_i^{e_i,-}$ for $i=1,\ldots,\deg(w)$. Now set $P_2(q,c)=P(q_{\deg(w)},c')$, where $c'$ is any corner of the face containing the edge $e_{\deg(w)}$ in $q_{\deg(w)}$ (which is a degenerate face). Set $P(q,c)$ to be the concatenation of $P_1(q,c)$ and $P_2(q,c)$. Then $P(q,c)=(q_i,e_i,s_i)_{i=1}^{N}$ is well defined and we have $q_N^{e_N,s_N}=\rightarrow\cdot \coll{q,c}$.

Moreover, we have $N<8n$ and, setting $c'_i$ to be the corner corresponding to the edge $e_i$ in $q_i$, oriented towards $w$ for $i=1,\ldots,\deg w$, and oriented towards the vertex $v_i$ from the construction of Lemma~\ref{lemma:P_1(q,c)} for $N\geq i>\deg w$, we have $\coll{q	_i,c'_i}=\coll{q,c}$.
\end{lemma}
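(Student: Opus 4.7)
The plan is to split the analysis into the two subpaths $P_1$ and $P_2$, verify well-definedness, establish the endpoint, the length bound and the collapse invariant. First I would check that $P_1(q,c)$ never flips the root edge. The key observation is that $v_2$ and $v_4$ occupy the same class in the bipartition of $V(q)$ induced by parity of distance from the origin, whereas the root edge always connects one vertex of each class. Hence the root edge is incident to at most one of $\{v_2,v_4\}$, and the rule defining $w$ guarantees that $w$ itself is not an endpoint of the root edge. Since every $e_i$ is incident to $w$, each flip in $P_1$ is legal. The effect of the counterclockwise flip of $e_i$ is to detach $e_i$ from $w$ without attaching a new edge to $w$, so $\deg(w)$ drops by one at each step. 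Consequently, after the $\deg(w)-1$ flips of $P_1$, the vertex $w$ has degree exactly $1$ in $q_{\deg(w)}$, and $e_{\deg(w)}$ is the internal edge of a degenerate face. Depending on whether this internal edge is the root edge or not, $P_2=P(q_{\deg(w)},c')$ is then well-defined by Lemma~\ref{lemma:annoying root} or Lemma~\ref{lemma:P_1(q,c)}, and ends at $\rightarrow\cdot\coll{q_{\deg(w)},c'}$.

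The technical heart of the argument is the collapse invariant $\coll{q_i,c'_i}=\coll{q,c}$. The base case $i=1$ is immediate: $c'_1$ is the unique corner of $v_1$ in $f$, namely $c$ itself. For the inductive step with $1\le i<\deg(w)$ I would examine the local effect of the counterclockwise flip of $e_i$ on the face containing $c'_i$ and verify, by direct inspection of the possible local configurations (four-vertex versus three-vertex faces, single versus multiple edges, whether the face on the other side of $e_i$ is degenerate), that the identifications performed by the collapse absorb the flip, so that $\coll{q_{i+1},c'_{i+1}}=\coll{q_i,c'_i}$. Taking $i=\deg(w)$ gives $\coll{q_{\deg(w)},c'}=\coll{q,c}$; the invariant for $i>\deg(w)$ is then inherited from the analogous statement in Lemma~\ref{lemma:P_1(q,c)} (or Lemma~\ref{lemma:annoying root}) applied to the pair $(q_{\deg(w)},c')$.

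Combined with the endpoint of $P_2$ identified above, this yields $q_N^{e_N,s_N}=\rightarrow\cdot\coll{q,c}$. The length bound $N<8n$ follows by adding $|P_1|=\deg(w)-1$ to the bound on $|P_2|$ supplied by the applicable sub-lemma. The main obstacle I anticipate is precisely the inductive verification of the collapse invariant during $P_1$: the corner $c'_i$ lives in an evolving face whose shape and neighbours change from one step to the next, and one has to unwind the definition of $\coll{\cdot,\cdot}$ across several local configurations to confirm that the flip of $e_i$ is exactly absorbed. The role of the specific choice of $w$ (encoded by the \emph{otherwise} clause as a function of where the root edge touches $f$) is precisely to make this verification go through uniformly, independently of the position of the root edge.
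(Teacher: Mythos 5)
Your overall skeleton matches the paper's: exclude the root edge from $\{e_1,\ldots,e_{\deg w}\}$, run $P_1$ to reduce $\deg w$ to $1$, then invoke the earlier lemma for $P_2$. Your bipartition argument for why the root edge avoids $w$ is a perfectly good (and essentially equivalent) replacement for the paper's $3$-cycle argument, and your length bookkeeping is fine. Two remarks on the $P_2$ hand-off: since the root edge is never $e_{\deg w}$ (it is not incident to $w$) and flips never change which edge is the root, the branch through Lemma~\ref{lemma:annoying root} is vacuous here; and your base case claim ``$c'_1=c$'' is not quite right, since $c'_1$ is a corner at the non-$w$ endpoint of $e_1$, which may be the vertex of $c_3$ rather than of $c=c_1$ --- one still gets $\coll{q,c'_1}=\coll{q,c}$, but because collapsing $f$ from a corner of $v_1$ and from a corner of $v_3$ produce the same identification, not because the corners coincide.

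The genuine gap is in what you yourself flag as ``the technical heart'': the invariant $\coll{q_{i+1},c'_{i+1}}=\coll{q_i,c'_i}$ for the flips of $P_1$. You state that you \emph{would} verify it ``by direct inspection of the possible local configurations,'' but no inspection is carried out, and this is precisely the nontrivial content of the lemma. The paper replaces this open-ended case-check by a short structural argument: name the boundary edges of $f$ in clockwise order as $e_1,e_{\deg w},\eta,\eta'$, observe that $\coll{q,c}$ can be computed by first erasing an edge adjacent to $c$ in $f$ (say $e_1$) and then identifying the opposite pair $e_{\deg w}$ with $\eta$, and notice that the map obtained from $q$ by erasing $e_1$ is identical (with the same labelling of all remaining cells) to the map obtained from $q^{e_1,-}$ by erasing the flipped $e_1$. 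Hence the two collapses coincide, and iterating the same observation handles each $i\le\deg w$. Without either that argument or an explicit exhaustive local check, the invariant --- and with it the conclusion $q_N^{e_N,s_N}=\rightarrow\cdot\coll{q,c}$ --- is asserted rather than proved.
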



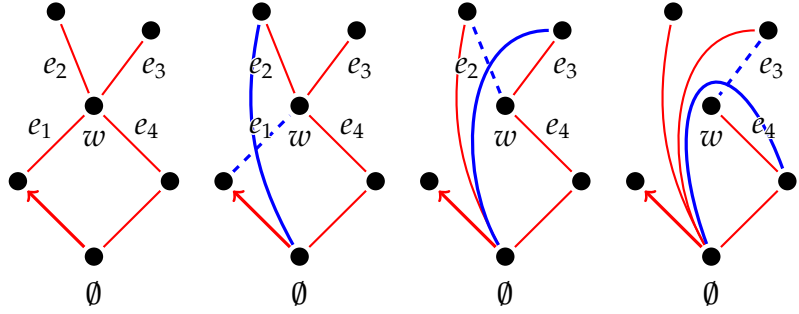
\begin{figure}\centering
\begin{tikzpicture}
\clip[use as bounding box] (-1.3,-1.7) rectangle (1.3,2.5);
	\begin{pgfonlayer}{nodelayer}
	\begin{scope}[yscale=1,xscale=-1]
		\node [style=vertex] (0) at (-1, -0) {};
		\node [style=vertex, label=below:$w$] (1) at (0, 1) {};
		\node [style=vertex] (2) at (1, -0) {};
		\node [style=vertex,label=below:$\emptyset$] (3) at (0, -1) {};
		\node [style=vertex] (4) at (-0.75, 2) {};
		\node [style=vertex] (5) at (0.5, 2.25) {};
		\node at (0.7,0.7) {$e_1$};
		\node at (0.5,1.5) {$e_2$};
		\node at (-0.8,1.5) {$e_3$};
		\node at (-0.7,0.7) {$e_4$};
		\end{scope}
	\end{pgfonlayer}
	\begin{pgfonlayer}{edgelayer}
		\draw[quad] (4) to (1);
		\draw[quad] (1) to (5);
		\draw[quad] (1) to (0);
		\draw[quad] (0) to (3);
		\draw[quad,very thick,->] (3) to (2);
		\draw[quad] (2) to (1);
	\end{pgfonlayer}
\end{tikzpicture}
\begin{tikzpicture}
\clip[use as bounding box] (-1.3,-1.7) rectangle (1.3,2.5);
	\begin{pgfonlayer}{nodelayer}
	\begin{scope}[yscale=1,xscale=-1]
		\node [style=vertex] (0) at (-1, -0) {};
		\node [style=vertex, label=below:$w$] (1) at (0, 1) {};
		\node [style=vertex] (2) at (1, -0) {};
		\node [style=vertex,label=below:$\emptyset$] (3) at (0, -1) {};
		\node [style=vertex] (4) at (-0.75, 2) {};
		\node [style=vertex] (5) at (0.5, 2.25) {};
		\node at (0.5,0.7) {\contour{white}{$e_1$}};
		\node at (0.5,1.5) {\contour{white}{$e_2$}};
		\node at (-0.8,1.5) {$e_3$};
		\node at (-0.7,0.7) {$e_4$};
		\end{scope}
	\end{pgfonlayer}
	\begin{pgfonlayer}{edgelayer}
		\draw[quad] (4) to (1);
		\draw[quad] (1) to (5);
		\draw[quad] (1) to (0);
		\draw[quad] (0) to (3);
		\draw[quad,very thick,->] (3) to (2);
		\draw[very thick, dashed, blue] (2) to (1);
		\draw[very thick, blue, bend right=20] (5) to (3);
	\end{pgfonlayer}
\end{tikzpicture}	
\begin{tikzpicture}
\clip[use as bounding box] (-1.3,-1.7) rectangle (1.3,2.5);
	\begin{pgfonlayer}{nodelayer}
	\begin{scope}[yscale=1,xscale=-1]
		\node [style=vertex] (0) at (-1, -0) {};
		\node [style=vertex, label=below:$w$] (1) at (0, 1) {};
		\node [style=vertex] (2) at (1, -0) {};
		\node [style=vertex,label=below:$\emptyset$] (3) at (0, -1) {};
		\node [style=vertex] (4) at (-0.75, 2) {};
		\node [style=vertex] (5) at (0.5, 2.25) {};
		\node at (0.5,1.5) {\contour{white}{$e_2$}};
		\node at (-0.8,1.5) {$e_3$};
		\node at (-0.7,0.7) {$e_4$};
		\end{scope}
	\end{pgfonlayer}
	\begin{pgfonlayer}{edgelayer}
		\draw[quad] (4) to (1);
		\draw[very thick, blue, dashed] (1) to (5);
		\draw[quad] (1) to (0);
		\draw[quad] (0) to (3);
		\draw[quad,very thick,->] (3) to (2);
		\draw[quad, bend right=20] (5) to (3);
		\draw[very thick, blue, out=180, in=120] (4) to (3);
	\end{pgfonlayer}
\end{tikzpicture}	
\begin{tikzpicture}
\clip[use as bounding box] (-1.3,-1.7) rectangle (1.3,2.5);
	\begin{pgfonlayer}{nodelayer}
	\begin{scope}[yscale=1,xscale=-1]
		\node [style=vertex] (0) at (-1, -0) {};
		\node [style=vertex, label=below:$w$] (1) at (0, 1) {};
		\node [style=vertex] (2) at (1, -0) {};
		\node [style=vertex,label=below:$\emptyset$] (3) at (0, -1) {};
		\node [style=vertex] (4) at (-0.75, 2) {};
		\node [style=vertex] (5) at (0.5, 2.25) {};
		\node at (-0.8,1.5) {$e_3$};
		\node at (-0.7,0.7) {\contour{white}{$e_4$}};
		\end{scope}
	\end{pgfonlayer}
	\begin{pgfonlayer}{edgelayer}
		\draw[very thick, blue, dashed] (4) to (1);
		\draw[quad] (1) to (0);
		\draw[quad] (0) to (3);
		\draw[quad,very thick,->] (3) to (2);
		\draw[quad, bend right=20] (5) to (3);
		\draw[quad, out=180, in=120] (4) to (3);
		\draw[very thick, blue, out=110, in=110, looseness=4] (0) to (3);
	\end{pgfonlayer}
\end{tikzpicture}	
\caption{\label{fig:face collapse}The quadrangulations $q_1,q_2,\ldots,q_{\deg w}$ in the path $P(q,c)$, where $c$ is a corner not belonging to a degenerate face.}
\end{figure}

\begin{proof}
First of all, notice that the root edge does not appear in $\{e_1,e_2,\ldots,e_{\deg w}\}$ so that all of the first $\deg w-1$ flips are ``allowed'': if it did, given our choice of $w$ then the root edge would have both $v_2$ and $v_4$ as endpoints; but this would create a cycle of length $3$ in the quadrangulation $q$, which is bipartite.

We can show inductively that $\coll{q_i,c'_i}=\coll{q,c}$ for $i=2,\ldots,\deg w-1$.

Let $e_1,e_{\deg_w},\eta,\eta'$ be the edges forming the boundary of $f$ in $q$, named in clockwise order. The quadrangulation $\coll{q,c}$ is obtained by identifying $e_1$ with $\eta'$ and $e_{\deg w}$ with $\eta$, thus collapsing $f$; equivalently, it is obtained by first erasing either $e_1$ or $\eta'$ (i.e.~an edge adjacent to $c$ in $f$), and identifying $e_{\deg w}$ with $\eta$ (the edges opposite $c$).

Consider now the quadrangulation $q_2=q^{e_1,-}$; the clockwise boundary of the face lying to the left of the flipped oriented edge $e_1$ is formed by $e_2,e_{\deg w},\eta,-e_1$, with $e_1$ and $e_2$ being adjacent to $c'_2$. We thus have that $\coll{q_2,c'_2}$ can be obtained by first erasing $e_1$, then identifying $e_{\deg w}$ with $\eta$. But, since the map obtained from $q$ by erasing $e_1$ and the map obtained from $q_2$ by erasing the flipped $e_1$ are exactly the same (with all labels assigned to objects in the same way), it follows that $\coll{q_2,c'_2}=\coll{q,c}$.

The argument can be repeated to show that, for $i\leq\deg w$, $\coll{q_i,c'_i}=\coll{q_{i-1},c'_{i-1}}$ (because the two are obtained in the same way from the coinciding maps created by erasing $e_{i-1}$ from $q_{i-1}$ and $q_i$).

Consider now the quadrangulation $q_{\deg w}$; in it, the degree of $w$ is $1$, and therefore $e_w$ is the internal edge of a degenerate face $f'$ (and is not the root edge). Moreover, collapsing $f'$ yields $\coll{q,c}$. We can thus invoke Lemma~\ref{lemma:P_1(q,c)}, which tells us that $P(q_{\deg w},c')$, where $c'$ is a corner of $f'$, is a flip path of length at most $6n$ ending with $\rightarrow\cdot \coll{q_{\deg w},c'}=\rightarrow\cdot \coll{q,c}$, and that $\coll{q_i,c'_i}=\coll{q_{\deg w}, c'}=\coll{q,c}$ for $i>\deg w$.

The estimate for $|P(q,c)|$ follows from the fact that $\deg w < 2n=|E(q)|$.
\end{proof}

Via the three lemmas above, for all pairs $(q,c)$, where $q\in\sQ_n$ and $c$ is a corner of $q$, we have constructed a canonical path $P(q,c)=(q_i,e_i,s_i)_{i=1}^N$ such that $q_N^{e_N,s_N}=\rightarrow\cdot\coll{q,c}$. The crucial property of these canonical paths is highlighted by the corollary below:

\begin{cor}\label{cor:few corners}
Consider any triple $(q,e,s)$, where $q\in\sQ_n$, $e$ is an edge of $q$ other than the root edge and $s=\pm$. Suppose $(q,e,s)$ appears in the sequence $P(q',c')$ for some $q'\in\sQ_n$ and some corner $c'$ of $q'$. Let $c_1,c_2$ be the corners of $q$ that correspond to the two possible orientations of $e$; we have $\coll{q',c'}\in\{\coll{q,c_1},\coll{q,c_2}\}$.\end{cor}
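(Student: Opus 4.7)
The statement is essentially a bookkeeping consequence of the three constructive lemmas above, so the plan is simply to collate the identities they provide. In each of Lemmas \ref{lemma:P_1(q,c)}, \ref{lemma:annoying root} and \ref{lemma:complete path}, the path $P(q',c')=(q_i,e_i,s_i)_{i=1}^{N}$ is built together with an explicit assignment, for every index $i$, of a corner $c'_i$ of $q_i$ corresponding to the edge $e_i$ endowed with a prescribed orientation (towards $v_i$ in Lemma \ref{lemma:P_1(q,c)}, towards or away from $w$ in Lemmas \ref{lemma:annoying root} and \ref{lemma:complete path}). In each case the conclusion of the lemma states precisely that $\coll{q_i,c'_i}=\coll{q',c'}$.

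First I would suppose that $(q,e,s)$ appears as the $i$-th triple in the canonical path $P(q',c')$, so that $q=q_i$, $e=e_i$ and $s=s_i$. By construction, $c'_i$ is the corner of $q=q_i$ corresponding to the edge $e=e_i$ in one of its two orientations, and the hypothesis of the corollary defines $c_1,c_2$ to be exactly the two corners of $q$ corresponding to the two possible orientations of $e$. Hence $c'_i\in\{c_1,c_2\}$, and the relevant lemma yields $\coll{q,c'_i}=\coll{q_i,c'_i}=\coll{q',c'}$, which places $\coll{q',c'}$ in $\{\coll{q,c_1},\coll{q,c_2}\}$ as required.

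The only point worth double-checking is that the property $\coll{q_i,c'_i}=\coll{q',c'}$ survives the concatenation of Lemma \ref{lemma:complete path}, in which $P(q',c')$ is $P_1(q',c')$ followed by $P_2(q',c')=P(q_{\deg w},\tilde c)$. This is however built into the proof of that lemma: the corner $\tilde c$ initiating the second phase lies in the degenerate face created by the first phase and satisfies $\coll{q_{\deg w},\tilde c}=\coll{q',c'}$, so applying Lemma \ref{lemma:P_1(q,c)} to the pair $(q_{\deg w},\tilde c)$ propagates the identity through all remaining steps. The hard part, if any, was already done inside Lemmas \ref{lemma:P_1(q,c)}--\ref{lemma:complete path}; the corollary itself amounts to recognising that each flip of each canonical path uses up one of at most two corners compatible with a fixed face-collapse operation, which is exactly what is needed to control congestion in the next section.
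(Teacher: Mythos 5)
Your proposal is correct and matches the intended (and only reasonable) argument: the corollary is stated without an explicit proof in the paper precisely because it is a direct readout of the concluding clauses of Lemmas \ref{lemma:P_1(q,c)}, \ref{lemma:annoying root} and \ref{lemma:complete path}, each of which exhibits, for the $i$-th step $(q_i,e_i,s_i)$ of $P(q',c')$, a corner $c'_i$ of $q_i$ incident to $e_i$ (hence equal to one of the two corners determined by the orientations of $e_i$) with $\coll{q_i,c'_i}=\coll{q',c'}$. Your observation that the concatenation in Lemma \ref{lemma:complete path} still yields $\coll{q_i,c'_i}=\coll{q',c'}$ across the transition into the $P(q_{\deg w},c')$ phase is exactly the right point to check, and is indeed built into that lemma's statement.
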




\subsection{Completing the description of $\Psi_{q_1,q_2}$}\label{sec:completing psi}

\begin{figure}[t]\centering
\begin{tikzpicture}[scale=.5]
	\begin{pgfonlayer}{nodelayer}
		\node [style=vertex] (0) at (-3, -0) {};
		\node [style=vertex] (1) at (2, -0) {};
		\node [style=vertex] (2) at (-6, -0) {};
		\node [style=vertex] (3) at (-1.5, 1.25) {};
		\node [style=vertex] (4) at (1, 1.5) {};
		\node [style=vertex] (5) at (-2, 0.25) {};
		\node [style=vertex] (6) at (0.75, -1.5) {};
		\node [style=vertex] (7) at (-0.5, 0.75) {};
		\node [style=vertex] (8) at (0.25, 1) {};
		\node [style=vertex] (9) at (-5, 0.5) {};
		\node [style=vertex] (10) at (-4, 0.5) {};
		\node [style=vertex] (11) at (-5, -0.5) {};
		\node [style=vertex] (12) at (-4, -0.5) {};
		\node (x) at (-0.25, 3.2) {\small\contour{white}{$\eta$}};
		
	\end{pgfonlayer}
	\begin{pgfonlayer}{edgelayer}
	\begin{scope}
	\clip (3.center) [bend right=75, looseness=1.75] to (4.center) [bend right=30, looseness=1.00] to (3.center);
	\draw[draw=red, fill=red!20] (3) circle (20pt);
	\end{scope}
		\draw [bend left=90, looseness=1.75, very thick, red] (0) to (1);
		\draw [bend right=90, looseness=1.75, very thick, red, ->] (0) to (1);
		\draw [bend left=90, looseness=1.50, very thick, blue, <-] (2) to (0);
		\draw [bend right=90, looseness=1.50, very thick, blue] (2) to (0);
		\draw [bend right=60, looseness=1.50] (0) to (3);
		\draw [bend right=75, looseness=1.75] (3) to (4);
		\draw (4) to (1);
		\draw (0) to (3);
		\draw (5) to (0);
		\draw (4) to (6);
		\draw [bend right, looseness=1.00] (0) to (6);
		\draw (8) to (4);
		\draw [bend left, looseness=1.00] (7) to (4);
		\draw [bend right=45, looseness=1.00] (7) to (4);
		\draw [bend left, looseness=1.00] (3) to (4);
		\draw (2) to (9);
		\draw (9) to (10);
		\draw (10) to (0);
		\draw (2) to (11);
		\draw (11) to (12);
		\draw (12) to (0);
		\draw (2) to (0); 
	\end{pgfonlayer}
\end{tikzpicture}\quad
\begin{tikzpicture}[scale=.5]
	\begin{pgfonlayer}{nodelayer}
		\node [style=vertex] (0) at (-3, -0) {};
		\node [style=vertex] (1) at (2, -0) {};
		\node [style=vertex] (2) at (-6, -0) {};
		\node [style=vertex] (3) at (-1.5, 1.25) {};
		\node [style=vertex] (4) at (1, 1.5) {};
		\node [style=vertex] (5) at (-2, 0.25) {};
		\node [style=vertex] (6) at (0.75, -1.5) {};
		\node [style=vertex] (7) at (-2, 2.45) {};
		\node [style=vertex] (8) at (-0.25, 1) {};
		\node [style=vertex] (9) at (-5, 0.5) {};
		\node [style=vertex] (10) at (-4, 0.5) {};
		\node [style=vertex] (11) at (-5, -0.5) {};
		\node [style=vertex] (12) at (-4, -0.5) {};
		\node (x) at (-0.25, 3.7) {\small\contour{white}{$\eta$}};
		\node (x) at (-2.5, 1.5) {\small\contour{white}{$\eta'$}};
	\end{pgfonlayer}
	\begin{pgfonlayer}{edgelayer}
		\draw [bend left=90, looseness=2.2, very thick, red] (0) to (1);
		\draw [bend right=90, looseness=1.75, very thick, red, ->] (0) to (1);
		\draw [bend left=90, looseness=1.50, very thick, blue, <-] (2) to (0);
		\draw [bend right=90, looseness=1.50, very thick, blue] (2) to (0);
		\draw [bend right=60, looseness=1.50] (0) to (3);
		\draw [bend right=75, looseness=1.75] (3) to (4);
		\draw (4) to (1);
		\draw (0) to (3);
		\draw (5) to (0);
		\draw (4) to (6);
		\draw [bend right, looseness=1.00] (0) to (6);
		\draw (8) to (4);
		\draw [bend left, looseness=1.00] (3) to (4);
		\draw (2) to (9);
		\draw (9) to (10);
		\draw (10) to (0);
		\draw (2) to (11);
		\draw (11) to (12);
		\draw (12) to (0);
		\draw (2) to (0); 
		\draw [bend left=80, looseness=1.5] (0) to (1);
		\draw [out=85, in=220, looseness=0.8] (0) to (7);
	\end{pgfonlayer}
\end{tikzpicture}
\begin{tikzpicture}[scale=.5]
	\begin{pgfonlayer}{nodelayer}
		\node [style=vertex] (0) at (-3, -0) {};
		\node [style=vertex] (1) at (2, -0) {};
		\node [style=vertex] (2) at (-6, -0) {};
		\node [style=vertex] (3) at (-1.5, 1.25) {};
		\node [style=vertex] (4) at (1, 1.5) {};
		\node [style=vertex] (5) at (-2, 0.25) {};
		\node [style=vertex] (6) at (0.75, -1.5) {};
		\node [style=vertex] (7) at (-4.5, 2.1) {};
		\node [style=vertex] (8) at (-0.25, 1) {};
		\node [style=vertex] (9) at (-5, 0.5) {};
		\node [style=vertex] (10) at (-4, 0.5) {};
		\node [style=vertex] (11) at (-5, -0.5) {};
		\node [style=vertex] (12) at (-4, -0.5) {};
		\node (x) at (-4.5, 3) {\small\contour{white}{$\eta$}};
		\node (x) at (-3.8, 2) {\small\contour{white}{$\eta'$}};
	\end{pgfonlayer}
	\begin{pgfonlayer}{edgelayer}
		\draw [bend right=90, looseness=1.75, very thick, red, ->] (0) to (1);
		\draw [bend left=90, looseness=1.50] (2) to (0);
		\draw [bend right=90, looseness=1.50, very thick, blue] (2) to (0);
		\draw [bend right=60, looseness=1.50] (0) to (3);
		\draw [bend right=75, looseness=1.75] (3) to (4);
		\draw (4) to (1);
		\draw (0) to (3);
		\draw (5) to (0);
		\draw (4) to (6);
		\draw [bend right, looseness=1.00] (0) to (6);
		\draw (8) to (4);
		\draw [bend left, looseness=1.00] (3) to (4);
		\draw (2) to (9);
		\draw (9) to (10);
		\draw (10) to (0);
		\draw (2) to (11);
		\draw (11) to (12);
		\draw (12) to (0);
		\draw (2) to (0); 
		\draw [bend left=90, looseness=2.50, very thick, blue, <-] (2) to (0);
		\draw [bend left=90, looseness=1.5, red, very thick] (0) to (1);
		\draw [out=90, in=0, looseness=0.8] (0) to (7);
	\end{pgfonlayer}
\end{tikzpicture}
\begin{tikzpicture}[scale=.5]
	\begin{pgfonlayer}{nodelayer}
		\node [style=vertex] (0) at (-3, -0) {};
		\node [style=vertex] (1) at (2, -0) {};
		\node [style=vertex] (2) at (-6, -0) {};
		\node [style=vertex] (3) at (-1.5, 1.25) {};
		\node [style=vertex] (4) at (1, 1.5) {};
		\node [style=vertex] (5) at (-2, 0.25) {};
		\node [style=vertex] (6) at (0.75, -1.5) {};
		\node [style=vertex] (7) at (-4.5, 2.1) {};
		\node [style=vertex] (8) at (-0.25, 1) {};
		\node [style=vertex] (9) at (-5, 0.5) {};
		\node [style=vertex] (10) at (-4, 0.5) {};
		\node [style=vertex] (11) at (-5, -0.5) {};
		\node [style=vertex] (12) at (-4, -0.5) {};
		\node (x) at (-4.5, 3) {\small\contour{white}{$\eta$}};
		\node (x) at (-5.2, 2) {\small\contour{white}{$\eta'$}};
	\end{pgfonlayer}
	\begin{pgfonlayer}{edgelayer}
		\draw [bend right=90, looseness=1.75, very thick, red, ->] (0) to (1);
		\draw [bend left=90, looseness=1.50] (2) to (0);
		\draw [bend right=90, looseness=1.50, very thick, blue] (2) to (0);
		\draw [bend right=60, looseness=1.50] (0) to (3);
		\draw [bend right=75, looseness=1.75] (3) to (4);
		\draw (4) to (1);
		\draw (0) to (3);
		\draw (5) to (0);
		\draw (4) to (6);
		\draw [bend right, looseness=1.00] (0) to (6);
		\draw (8) to (4);
		\draw [bend left, looseness=1.00] (3) to (4);
		\draw (2) to (9);
		\draw (9) to (10);
		\draw (10) to (0);
		\draw (2) to (11);
		\draw (11) to (12);
		\draw (12) to (0);
		\draw (2) to (0); 
		\draw [bend left=90, looseness=2.50, very thick, blue, <-] (2) to (0);
		\draw [bend left=90, looseness=1.5, red, very thick] (0) to (1);
		\draw [out=90, in=180, looseness=0.8] (2) to (7);
	\end{pgfonlayer}
\end{tikzpicture}\\
\hspace{.5cm}\small{$(\coll{L,c}\cdot q_i^R, e_i^R, s_i^R)_{i=1}^{N_R}$\hspace{.7cm} $\rightsquigarrow$ flip $\eta$ clockwise $\rightsquigarrow$\hspace{.3cm} $\rightsquigarrow$ flip $\eta'$ clockwise $\rightsquigarrow$}\\	
\begin{tikzpicture}[scale=.5]
	\begin{pgfonlayer}{nodelayer}
		\node [style=vertex] (0) at (-3, -0) {};
		\node [style=vertex] (1) at (2, -0) {};
		\node [style=vertex] (2) at (-6, -0) {};
		\node [style=vertex] (3) at (-1.5, 1.25) {};
		\node [style=vertex] (4) at (1, 1.5) {};
		\node [style=vertex] (5) at (-2, 0.25) {};
		\node [style=vertex] (6) at (0.75, -1.5) {};
		\node [style=vertex] (7) at (-4.5, -2.1) {};
		\node [style=vertex] (8) at (-0.25, 1) {};
		\node [style=vertex] (9) at (-5, 0.5) {};
		\node [style=vertex] (10) at (-4, 0.5) {};
		\node [style=vertex] (11) at (-5, -0.5) {};
		\node [style=vertex] (12) at (-4, -0.5) {};
		\node (x) at (-4.5, -3) {\small\contour{white}{$\eta$}};
		\node (x) at (-5.2, -2) {\small\contour{white}{$\eta'$}};
	\end{pgfonlayer}
	\begin{pgfonlayer}{edgelayer}
		\draw [bend right=90, looseness=1.75, very thick, red, ->] (0) to (1);
		\draw [bend left=90, looseness=1.50, very thick, blue, <-] (2) to (0);
		\draw [bend right=90, looseness=1.50] (2) to (0);
		\draw [bend right=60, looseness=1.50] (0) to (3);
		\draw [bend right=75, looseness=1.75] (3) to (4);
		\draw (4) to (1);
		\draw (0) to (3);
		\draw (5) to (0);
		\draw (4) to (6);
		\draw [bend right, looseness=1.00] (0) to (6);
		\draw (8) to (4);
		\draw [bend left, looseness=1.00] (3) to (4);
		\draw (2) to (9);
		\draw (9) to (10);
		\draw (10) to (0);
		\draw (2) to (11);
		\draw (11) to (12);
		\draw (12) to (0);
		\draw (2) to (0); 
		\draw [bend right=90, looseness=2.50, very thick, blue] (2) to (0);
		\draw [bend left=90, looseness=1.5, red, very thick] (0) to (1);
		\draw [out=-90, in=180, looseness=0.8] (2) to (7);
	\end{pgfonlayer}
\end{tikzpicture}
\begin{tikzpicture}[scale=.5]
	\begin{pgfonlayer}{nodelayer}
		\node [style=vertex] (0) at (-3, -0) {};
		\node [style=vertex] (1) at (2, -0) {};
		\node [style=vertex] (2) at (-6, -0) {};
		\node [style=vertex] (3) at (-1.5, 1.25) {};
		\node [style=vertex] (4) at (1, 1.5) {};
		\node [style=vertex] (5) at (-2, 0.25) {};
		\node [style=vertex] (6) at (0.75, -1.5) {};
		\node [style=vertex] (7) at (-4.5, -2.1) {};
		\node [style=vertex] (8) at (-0.25, 1) {};
		\node [style=vertex] (9) at (-5, 0.5) {};
		\node [style=vertex] (10) at (-4, 0.5) {};
		\node [style=vertex] (11) at (-5, -0.5) {};
		\node [style=vertex] (12) at (-4, -0.5) {};
		\node (x) at (-4.5, -3) {\small\contour{white}{$\eta$}};
		\node (x) at (-3.7, -2) {\small\contour{white}{$\eta'$}};
	\end{pgfonlayer}
	\begin{pgfonlayer}{edgelayer}
		\draw [bend right=90, looseness=1.75, very thick, red, ->] (0) to (1);
		\draw [bend left=90, looseness=1.50, very thick, blue, <-] (2) to (0);
		\draw [bend right=90, looseness=1.50] (2) to (0);
		\draw [bend right=60, looseness=1.50] (0) to (3);
		\draw [bend right=75, looseness=1.75] (3) to (4);
		\draw (4) to (1);
		\draw (0) to (3);
		\draw (5) to (0);
		\draw (4) to (6);
		\draw [bend right, looseness=1.00] (0) to (6);
		\draw (8) to (4);
		\draw [bend left, looseness=1.00] (3) to (4);
		\draw (2) to (9);
		\draw (9) to (10);
		\draw (10) to (0);
		\draw (2) to (11);
		\draw (11) to (12);
		\draw (12) to (0);
		\draw (2) to (0); 
		\draw [bend right=90, looseness=2.50, very thick, blue] (2) to (0);
		\draw [bend left=90, looseness=1.5, red, very thick] (0) to (1);
		\draw [out=-90, in=0, looseness=0.8] (0) to (7);
	\end{pgfonlayer}
\end{tikzpicture}	\qquad
\begin{tikzpicture}[scale=.5]
	\begin{pgfonlayer}{nodelayer}
		\node [style=vertex] (0) at (-3, -0) {};
		\node [style=vertex] (1) at (2, -0) {};
		\node [style=vertex] (2) at (-6, -0) {};
		\node [style=vertex] (3) at (-1.5, 1.25) {};
		\node [style=vertex] (4) at (1, 1.5) {};
		\node [style=vertex] (5) at (-2, 0.25) {};
		\node [style=vertex] (6) at (0.75, -1.5) {};
		\node [style=vertex] (8) at (-0.25, 1) {};
		\node [style=vertex] (9) at (-5, 0.5) {};
		\node [style=vertex] (10) at (-4, 0.5) {};
		\node [style=vertex] (11) at (-5, -0.5) {};
		\node [style=vertex] (12) at (-4, -0.5) {};
		\node [style=vertex] (13) at (-5, 1.5) {};
		\node [style=vertex] (14) at (-4, 1.5) {};
	\end{pgfonlayer}
	\begin{pgfonlayer}{edgelayer}
		\begin{scope}
	\clip (2.center) to (13.center) to (9.center) to (2.center);
	\draw[draw=blue, fill=blue!20] (2) circle (25pt);
	\end{scope}
		\draw [bend right=90, looseness=1.75, very thick, red, ->] (0) to (1);
		\draw [bend right=90, looseness=1.50, very thick, blue] (2) to (0);
		\draw [bend right=60, looseness=1.50] (0) to (3);
		\draw [bend right=75, looseness=1.75] (3) to (4);
		\draw (4) to (1);
		\draw (0) to (3);
		\draw (5) to (0);
		\draw (4) to (6);
		\draw [bend right, looseness=1.00] (0) to (6);
		\draw (8) to (4);
		\draw [bend left, looseness=1.00] (3) to (4);
		\draw (2) to (9);
		\draw (9) to (10);
		\draw (10) to (0);
		\draw (2) to (11);
		\draw (11) to (12);
		\draw (12) to (0);
		\draw (2) to (0); 
		\draw [bend left=90, looseness=2.50, very thick, blue, <-] (2) to (0);
		\draw [bend left=90, looseness=1.5, red, very thick] (0) to (1);
		\draw (2)--(13)--(14)--(0);
	\end{pgfonlayer}
\end{tikzpicture}\\
\hspace{-4cm}\small{$\rightsquigarrow$ flip $\eta$ clockwise $\rightsquigarrow$\hspace{.3cm} $\rightsquigarrow$ flip $\eta'$ clockwise $\rightsquigarrow$ \hspace{.7cm}$((q_i^L\cdot \coll{R,c'}, e_i^L, s_i^L)_{i=1}^{N_L})^{\rm{rev}}$}
\caption{\label{fig:complete path}The path $P(\coll{L,c}\cdot R, L\cdot \coll{R,c'})$; notice how the four flips in the ``central phase'' of the path turn the result of the ``right phase'', which is $\coll{L,c}\cdot (\rightarrow \cdot \coll{R,c'})$, into the quadrangulation $(\rightarrow \cdot \coll{L,c}) \cdot \coll{R,c'}$, so that the ``left phase'' can begin and turn the quadrangulation into the desired $L\cdot \coll{R,c'}$. Note that the root edge of the quadrangulation is always the one marked in red appearing in the lower right part of the picture; the arrow marked in blue represents the root edge of the ``left quadrangulation'' and is marked to help confirm the fact above. 
} 
\end{figure}
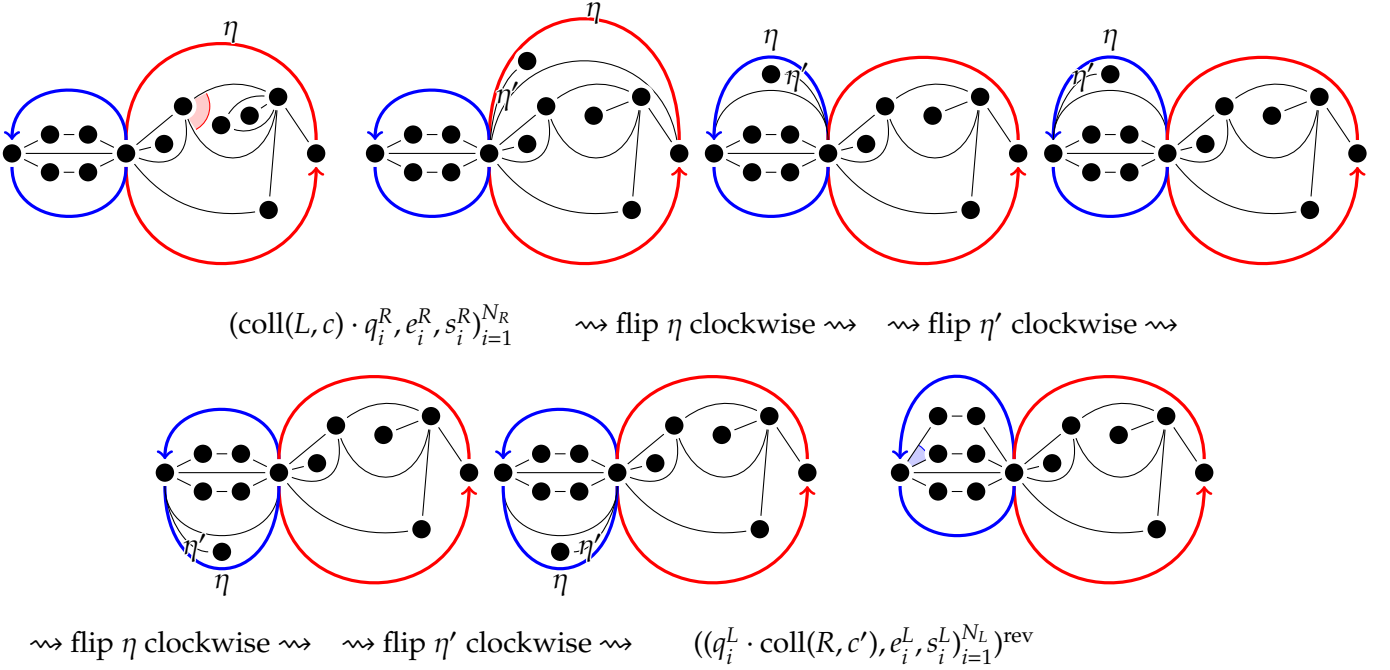

Given $(L_i\cdot R_i)_{i=0}^{n-1}\in\Gamma_q^{q'}$, we now wish to build a flip path turning the quadrangulation $L_i\cdot R_i$ into $L_{i+1}\cdot R_{i+1}$. That is, given $L\in\sQ_a$ and $R\in\sQ_{n-a-1}$ and two corners $c$ and $c'$ of $L$ and $R$ respectively, we wish to build a flip path from $\coll{L,c}\cdot R$ to $L\cdot\coll{R,c'}$.

This we shall do by simply combining multiple constructions from the previous section. Indeed, consider $P(L,c)=(q^L_i,e^L_i,s^L_i)_{i=1}^{N_L}$ and $P(R,c')=(q^R_i,e^R_i,s^R_i)_{i=1}^{N_R}$ as constructed previously. Though the edge $e_1^R$ is an edge of $R$, it can be uniquely identified with an edge of $\coll{L,c}\cdot R$; inductively, though $e_i^R$ is an edge of $q_i^R$, we can see it as an edge of $\coll{L,c}\cdot q_i^R=\coll{L,c}\cdot (q_{i-1}^R)^{e^R_{i-1},s^R_{i-1}}$. We may therefore consider the sequence of flips $(\coll{L,c}\cdot q^R_i,e^R_i,s^R_i)_{i=1}^{N_R}$, which is such that $(\coll{L,c}\cdot q^R_{N_R})^{e^R_{N_R},s^R_{N_R}}$ is equal to $\coll{L,c}\cdot (\rightarrow\cdot \coll{R,c'})$.

Now consider the face $f$ lying directly to the right of the root in $\coll{L,c}\cdot (\rightarrow\cdot \coll{R,c'})$, let $\eta$ be the edge immediately after the root edge in the clockwise contour of $f$ and let $\eta'$ be the internal edge of the degenerate face adjacent to $\eta$ within the ``right'' quadrangulation $\rightarrow\cdot \coll{R,c'}$. By alternatively flipping $\eta$ and $\eta'$, one can have the degenerate face containing $\eta'$ ``slide'' along the boundary of $f$. Consider in particular the sequence of four flips
$$(\coll{L,c}\cdot (\rightarrow\cdot \coll{R,c'}), \eta, +)$$
$$((\coll{L,c}\cdot (\rightarrow\cdot \coll{R,c'}))^{\eta,+}, \eta', +)$$
$$(((\coll{L,c}\cdot (\rightarrow\cdot \coll{R,c'}))^{\eta,+})^{\eta',+}, \eta, +)$$
$$((((\coll{L,c}\cdot (\rightarrow\cdot \coll{R,c'}))^{\eta,+})^{\eta',+})^{\eta,+}, \eta', +)$$
as depicted in Figure~\ref{fig:complete path}. After the first flip, the degenerate face containing $\eta'$ lies immediately to the left of the root edge in the ``left quadrangulation'' obtained as described in Section~\ref{sec:canonical paths 1} and shown in Figure~\ref{fig:eye quadrangulation}; the next three flips make it so that the degenerate face lies immediately to the \emph{right} of the root edge of the ``left quadrangulation'', with $\eta'$ adjacent to the origin. The result of the four flips is therefore $(\rightarrow\cdot\coll{L,c})\cdot \coll{R,c'}$.

We can thus define the whole path from $\coll{L,c}\cdot R$ to $L\cdot\coll{R,c'}$, which we shall denote by $P(\coll{L,c}\cdot R, L\cdot\coll{R,c'})$, by a concatenation of the following  sequences of flips, which we will refer to as the ``right phase'', the ``central phase'' (consisting of $4$ flips), the ``left phase'':
\newcommand{\rev}{\operatorname{rev}}
\begin{itemize}
\item right phase: $$(\coll{L,c}\cdot q^R_i,e^R_i,s^R_i)_{i=1}^{N_R}$$
\item central phase:
$$(\coll{L,c}\cdot (\rightarrow\cdot \coll{R,c'}), \eta, +)$$
$$((\coll{L,c}\cdot (\rightarrow\cdot \coll{R,c'}))^{\eta,+}, \eta', +)$$
$$(((\coll{L,c}\cdot (\rightarrow\cdot \coll{R,c'}))^{\eta,+})^{\eta',+}, \eta, +)$$
$$((((\coll{L,c}\cdot (\rightarrow\cdot \coll{R,c'}))^{\eta,+})^{\eta',+})^{\eta,+}, \eta', +)$$
\item left phase: $$((q_i^L\cdot \coll{R,c'},e^L_i,s^L_i)_{i=1}^{N_L})^{\rev},$$
\end{itemize}
where, given a flip path $P=(q_i,e_i,s_i)_{i=1}^N\in\Gamma_{q_1\rightarrow q_N^{e_N,s_N}}$, we set $P^{\rev}$ to be the flip path
$$(q_{N-i}^{e_{N+1-i},s_{N+1-i}},e_{N+1-i},-s_{N+1-i})_{i=1}^N$$
in $\Gamma_{q_N^{e_N,s_N}\rightarrow q_1}$.
%
%
%
%

We are now ready to fully describe the mapping $\Psi_{q_1,q_2}$: given $q_1,q_2\in\sQ_n$, consider any pair of sequences $((L^1_i\cdot R^1_i)_{i=0}^{n-1},(L^2_i\cdot R^2_i)_{i=0}^{n-1})\in\Gamma_{q_1}^{F(q_1,q_2)}\times\Gamma_{q_2}^{F(q_1,q_2)}$ that has nonzero probability according to $\P_{q_1}^{F(q_1,q_2)}\otimes\P_{q_2}^{F(q_1,q_2)}$. Set $\Psi_{q_1,q_2}((L^1_i\cdot R^1_i)_{i=0}^{n-1},(L^2_i\cdot R^2_i)_{i=0}^{n-1})$ to be the successive concatenation of
\begin{itemize}
\item $P(q_1,c)$, where $R^1_0=\coll{q_1,c}$;
\item $P((L^1_i,R^1_i),(L^1_{i+1},R^1_{i+1}))$ for $i=0,\ldots,n-2$;
\item $P((L^2_{i-1},R^2_{i-1}),(L^2_i,R^2_i))^{\operatorname{rev}}$ for $i=n-1,n-2,\ldots,1$;
\item $P(q_2,c')^{\operatorname{rev}}$, where $R^2_0=\coll{q_2,c'}$.
\end{itemize}
We also have all the setup necessary to show the following important estimate:

\begin{proposition}\label{prop:final estimate}
Consider a quadrangulation $q\in\sQ_n$, an edge $e$ of $q$ other than the root edge	and an element $s\in\{+,-\}$. We have
$$\sum_{q_1,q_2\in\sQ_n}\P_{q_1\rightarrow q_2}(\{\gamma\in\Gamma_{q_1\rightarrow q_2}\mbox{ containing } (q,e,s)\})\leq 8\cdot 12^{n+1}.$$
\end{proposition}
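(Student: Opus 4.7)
The plan is to split the event $\{\gamma\text{ contains }(q,e,s)\}$ according to which of eight phase-types of $\Psi_{q_1,q_2}$ the flip lies in: the initial subpath $P(q_1,c)$; the right, central and left phases of the $i$-th forward transition subpath; and the four mirror phase-types in the reversed second half. For each phase-type I will show the corresponding contribution to $\sum_{q_1,q_2}\P_{q_1\to q_2}(\cdot)$ is $O(12^{n+1})$, and sum.

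\smallskip

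The key observation is that throughout any transitional (right, central or left) phase of the $i$-th forward subpath, the traversed quadrangulation has the canonical $L\cdot R$ form, and the size of its left part (read off from the bigon containing the root corner) pins down the subpath index $i$ without any residual sum over $i$ to perform. Applying Corollary~\ref{cor:few corners} inside the one-sided path $P(R_{i-1}^1,c')$ to a right-phase flip gives $R_i^1\in\{\coll{\tilde R,c_1},\coll{\tilde R,c_2}\}$, where $\tilde R$ is the right part of $q$ and $c_1,c_2$ are the two corners of $q$ on either side of $e$, while the left part of $q$ must coincide with $L_{i-1}^1$; left- and central-phase flips yield analogous constraints, so in each case $(L_{i-1}^1,R_i^1)$ is forced to lie in a set of at most two explicit pairs determined by $(q,e)$. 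Combined with the push-forward definition of $\P_{q_1\to q_2}$, the bound $|\{q_2:F(q_1,q_2)=q'\}|\le 12$ and Lemma~\ref{key estimate} with $a=i-1$, $b=n-i-1$ (so that $2n-b-a-1=n+1$), the contribution of any single transitional phase is at most a bounded multiple of $12^{n+1}$.

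\smallskip

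The initial and final phases are treated separately. If $(q,e,s)$ appears in $P(q_1,c)$, the only surviving constraint from Corollary~\ref{cor:few corners} is $R_0^1\in\{\coll{q,c_1},\coll{q,c_2}\}$; marginalising $\P_{q_1}^{q'}$ by successively summing out the ``interior'' variables via property~(ii) of the mappings $g_k$ shows $\P_{q_1}^{q'}(R_0^1=r)=g_n(q_1,r)$ independently of $q'$, so by property~(iii) one obtains $\sum_{q_1,q'}\P_{q_1}^{q'}(R_0^1\in\{r_1,r_2\})\le 2|\sQ_n|\le 2\cdot 12^n$, which again becomes $O(12^{n+1})$ after the factor of $12$ from summing over $q_2$. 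A symmetric estimate handles $P(q_2,c')^{\operatorname{rev}}$ and the mirror second-half phase-types. Adding the eight $O(12^{n+1})$ contributions then gives the announced bound $8\cdot 12^{n+1}$.

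\smallskip

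The main technical obstacle is the central phase, where the four ``separator-moving'' flips from $\coll{L,c}\cdot(\rightarrow\cdot\coll{R,c'})$ to $(\rightarrow\cdot\coll{L,c})\cdot\coll{R,c'}$ temporarily disrupt the canonical left/right split of $q$ (the degenerate face crosses the separator bigon). One must check, configuration by configuration among the four, that $q$ still determines $(L_{i-1}^1,R_i^1)$ and the subpath index $i$ up to boundedly many choices, so that Lemma~\ref{key estimate} applies with no extra loss beyond the small constants already absorbed in the $O(12^{n+1})$ bound per phase.
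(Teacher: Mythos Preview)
Your proposal is correct and follows essentially the same approach as the paper's own proof. The paper likewise splits according to which subpath of $\Psi_{q_1,q_2}$ the flip lies in, invokes Corollary~\ref{cor:few corners} to pin down the relevant $(L_i,R_{i+1})$ pair to at most two candidates, uses the bound $|\{q_2:F(q_1,q_2)=q'\}|\le 12$ to replace the sum over $q_2$ by a sum over $q'$, and then applies Lemma~\ref{key estimate} with exactly the index choice you describe. The only organisational difference is that the paper subdivides the transitional subpath not by phase (right/central/left) but by the location of $e$ in the canonical $q_L\cdot q_R$ decomposition of $q$ (edge in $q_L$, edge in $q_R$, or the single separator-crossing flip for which $q^{e,s}\in\sQ_{l+1}\cdot\sQ_{r-1}$); this folds most of the central-phase flips into the ``edge in $q_L$'' case rather than treating them separately, which sidesteps the configuration-by-configuration check you flag as the main obstacle.
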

\begin{proof}
By our definition of $\P_{q_1\rightarrow q_2}$, the expression we wish to estimate is 
$$\sum_{q_1,q_2\in\sQ_n}\P_{q_1}^{F(q_1,q_2)}\otimes\P_{q_2}^{F(q_1,q_2)}(\{X\in\Gamma_{q_1}^{F(q_1,q_2)}\times\Gamma_{q_2}^{F(q_1,q_2)}\st \Psi_{q_1,q_2}(X)\mbox{ contains } (q,e,s)\}).$$

We will use as an upper bound the one we obtain by summing the terms corresponding to the following three possibilities:
\begin{itemize}
\item the flip $(q,e,s)$ appears in $P(q_1,c)$ or $P(q_2,c')^{\rev}$, in which case $R^1_0=\coll{q_1,c}\in\{\coll{q,x_1},\coll{q,x_2}\}$ or $R^2_0=\coll{q_2,c'}\in\{\coll{q^e,y_1},\coll{q^e,y_2}\}$, where $x_1,x_2$ are the corners of $q$ that correspond to the two possible orientations of $e$ and $y_1,y_2$ are the corners of $q^e$ that correspond to the two possible orientations of the flipped version of $e$, by Corollary~\ref{cor:few corners}. Now, by Lemma~\ref{key estimate}, we have
$$\sum_{i=1,2}\sum_{q_1,q_2}\P_{q_1}^{F(q_1,q_2)}(R^1_0=\coll{q,x_i})+\P_{q_2}^{F(q_1,q_2)}(R^2_0=\coll{q^e,y_i})=$$
$$=\sum_{i=1,2}[\sum_{q_1,q'}|\{q_2\st F(q_1,q_2)=q'\}|\cdot \P_{q_1}^{q'}(R^1_0=\coll{q,x_i}, L^1_0=\rightarrow)+$$
$$\sum_{q_2,q'}|\{q_1\st F(q_1,q_2)=q'\}|\cdot \P_{q_2}^{q'}(R^2_0=\coll{q^e,y_i}, L^2_0=\rightarrow)]\leq$$
$$\leq 4\cdot 12 \cdot 12^{2n-(n-1)-1}=4\cdot 12^n. $$
\item The flip $(q,e,s)$ appears in  $P(L^1_i\cdot R^1_i, L^1_{i+1}\cdot R^1_{i+1})$ for some $i$; we shall consider some separate subcases:
\begin{itemize} 
\item we have $q=q_L\cdot q_R\in\sQ_l\cdot\sQ_r$ and $e$ is the image of an edge other than the root edge in $E(q_L)$ (so that $q^{e,s}$ also lies in $\sQ_l\cdot\sQ_r$, and in fact in $\sQ_l\cdot q_R$). Let $c_1,c_2$ be the corners corresponding to the two possible orientations of $e$ in $q^e$. If $(q,e,s)$ is a flip in the ``central phase'' of the path, with $e=\eta$ or $e=\eta'$ (see Figure~\ref{fig:complete path}), then at least one of the corner $c_1,c_2$ lies in the degenerate face that is in the process of being moved along the boundary of the ``left'' quadrangulation; as a consequence, we have $L^1_i\in\{\coll{q^e_L,c_1},\coll{q^e_L,c_2}\}$, hence $i=l-1$ and $R^1_{i+1}=q_R$. If not, then the flip happens in the ``left phase'' of the path and Corollary~\ref{cor:few corners} implies that $L^1_i\in\{\coll{q^e_L,c_1},\coll{q^e_L,c_2}\}$, hence $i=l-1$ and $R^1_{i+1}=q_R$. Thus we have the term
$$\sum_{i\in\{1,2\}}\sum_{q_1,q_2}\P_{q_1}^{F(q_1,q_2)}(L^1_{l-1}=\coll{q^e_L,c_i},R^1_l=q_R)\leq 2\cdot 12\cdot 12^{2n-(l-1)-(n-l-1)-1}=2\cdot 12^{n+1}.$$
\item we have $q=q_L\cdot q_R\in\sQ_l\cdot\sQ_r$ and $e$ is the image of an edge other than the root edge in $E(q_R)$. This case is analogous: this time Corollary~\ref{cor:few corners} gives $L^1_l=q_L$ and $R^1_{l+1}\in\{\coll{q_R,c_1},\coll{q_R,c_2}\}$, where $c_1,c_2$ are the corners of $q$ corresponding to the two possible orientations of $e$. This yields another term of the form
$$\sum_{i\in\{1,2\}}\sum_{q_1,q_2}\P_{q_1}^{F(q_1,q_2)}(L^1_{l}=q_L,R^1_{l+1}=\coll{q_R,c_i})\leq 2\cdot 12\cdot 12^{2n-l-(n-l-2)-1}=2\cdot 12^{n+1}.$$
\item we have $q\in\sQ_l\cdot\sQ_r$ and $q^{e,s}\in\sQ_{l+1}\cdot\sQ_{r-1}$. This is the only case we are missing, i.e.~the one where $e$ is the edge right after the root edge of $q$ in the clockwise contour of the face lying directly to the right of the root edge (it can be seen that, by construction, all other flips in $P(L^1_i\cdot R^1_i, L^1_{i+1}\cdot R^1_{i+1})$ happen within $q_L$ or within $q_R$. In this case, if $q^{e,s}=q'_L\cdot q'_R$, we have $L^1_i=q_L$ and $R^1_{i+1}=q'_R$, hence $i=l$; we get the term
$$\sum_{q_1,q_2}\P_{q_1}^{F(q_1,q_2)}(L^1_{l}=q_L,R^1_{l+1}=q'_R)\leq 12\cdot 12^{2n-l-(n-l-2)-1}=12^{n+1}.$$
 \end{itemize}
 Globally, this yields a term that can be upper bounded by $2\cdot 12^{n+1}$.
\item The flip $(q,e,s)$ appears in  $P(L^2_i\cdot R^2_i, L^2_{i+1}\cdot R^2_{i+1})^{\rev}$ for some $i$; clearly, this case is entirely analogous to the previous one, and will yield another term upper bounded by $2\cdot 12^{n+1}$.
\end{itemize}

Summing the three upper bounds above proves the lemma.
\end{proof}

\section{The final bound}\label{sec:final bound}

All this being done, we can apply the technique of canonical paths of Diaconis and Saloff-Coste \cite{diaconis96} to bound the relaxation time of $\F_n$.

\begin{proof}[Proof of Theorem~\ref{main theorem}]

The fact that $\nu_n=\mu_n$ is an obvious consequence of Proposition~\ref{prop:from rotations to flips}. The upper bound for $\nu_n$ can be proven in exactly the same way as the one in \cite{CS2019}: because the only difference between the chains $\F_n$ and $\FF_n$ is the fact that the root edge can no longer be flipped and that each flip is assigned a probability of $\frac{1}{3(2n-1)}$ rather than $\frac{1}{6n}$, the proof of Proposition 4.1 in \cite{CS2019} also applies to the spectral gap $\nu_n$ of $\FF_n$. 

As for the lower bound, we have
$$\frac{1}{\nu_n}\leq\max_{(q,e,s)}\frac{1}{\pi(q)p(q,q^{e,s})}\sum_{q_1,q_2\in\sQ_n}\sum_{\substack{\gamma\in\Gamma_{q_1\to q_2}:\\(q,e,s)\in\gamma}}|\gamma|\P_{q_1\to q_2}(\gamma)\pi(q_1)\pi(q_2),$$
where $\pi$ is the uniform measure on $\sQ_n$, $(q,e,s)$ varies among all possible flips ($q\in\sQ_n$, $e\in E(q)$, $s=\pm$) and $p(q,q^{e,s})$ is the transition probability according to $\FF_n$.

Now, all instances of $\pi(\cdot)$ can be replaced by $\frac{1}{|\sQ_n|}$. Also, we have $p(q,q^{e,s})\geq \frac{1}{3(2n-1)}$ (hence $\frac{1}{p(q,q^{e,s})}\leq 6n$) for all $q\in\sQ_n$, $e\in E(q)$, $s=\pm$. Moreover, the length of our canonical paths as constructed is at most $32n^2$. This can be checked by going through the final construction from Section~\ref{sec:completing psi}: each path of non-zero weight in $\Gamma_{q_1\to q_2}$ is built as two sequences (one ``straight'' and one ``reversed'') of
\begin{itemize}
\item one path of the form $P(q,c)$;
\item $n-1$ paths of the form $P((L,R),(L',R'))$.
\end{itemize}
In turn, every path of the form $P((L,R),(L',R'))$ is built as a concatenation of
\begin{itemize}
\item one path of the form $P(q,c)$;
\item $4$ single flips;
\item one path of the form $P(q,c)$, reversed.
\end{itemize}
By the three lemmas in Section~\ref{sec:from q to ->coll(q,c)}, we know that the length of a path of the form $P(q,c)$ is at most $8n$, which yields the global upper bound of $32n^2$.

Applying the bound given by Proposition~\ref{prop:final estimate} we then obtain
$$\frac{1}{\nu_n}\leq \frac{6n\cdot 32n^2\cdot 8\cdot 12^{n+1}}{|\sQ_n|}.$$

Since $\frac{|\sQ_n|}{12^n}\geq \frac{C}{n^{5/2}}$, we have
$$\frac{1}{\nu_n}\leq 6n\cdot 32n^2\cdot 8\cdot 12 \cdot Cn^{5/2}\leq C_2n^{11/2}$$
for some appropriate constant $C_2$, as desired.
\end{proof}

\end{document}